\newtheorem{lemma}{Lemma}
\newtheorem{theorem}{Theorem}
\newtheorem{rem}{Remark}
\newtheorem{assumption}{Assumption}
\newtheorem{proposition}{Proposition}
\newtheorem{corollary}{Corollary}
\newtheorem{example}{Example}
\newcommand{\xw}[1]{\color{black}{{#1}}\color{black}\xspace}
\begin{document}

\title{\LARGE \bf Fenchel Dual Gradient Methods for Distributed Convex Optimization over Time-varying Networks}

\author{Xuyang Wu and Jie Lu\thanks{X. Wu and J. Lu are with the School of Information Science and Technology, ShanghaiTech University, 201210 Shanghai, China. Email: {\tt \{wuxy, lujie\}@shanghaitech.edu.cn}. }
\thanks{This work has been supported by the National Natural Science Foundation of China under grant 61603254, the Shanghai Pujiang Program under grant 16PJ1406400, and the Natural Science Foundation of Shanghai under grant 16ZR1422500.}}
\maketitle

\begin{abstract}
In the large collection of existing distributed algorithms for convex multi-agent optimization, only a handful of them provide convergence rate guarantees on agent networks with time-varying topologies, which, however, restrict the problem to be unconstrained. Motivated by this, we develop a family of distributed Fenchel dual gradient methods for solving constrained, strongly convex but not necessarily smooth multi-agent optimization problems over time-varying undirected networks. The proposed algorithms are constructed based on the application of weighted gradient methods to the Fenchel dual of the multi-agent optimization problem, and can be implemented in a fully decentralized fashion. We show that the proposed algorithms drive all the agents to both primal and dual optimality asymptotically under a minimal connectivity condition and at sublinear rates under a standard connectivity condition. Finally, the competent convergence performance of the distributed Fenchel dual gradient methods is demonstrated via simulations.
\end{abstract}

%
%
\section{Introduction}\label{sec:introduction}

In many engineering scenarios, a network of agents often need to jointly make a decision so that a global cost consisting of their local costs is minimized and certain global constraints are satisfied. Such a multi-agent optimization problem has found a considerable number of applications, such as estimation by sensor networks \cite{Rabbat04}, network resource allocation \cite{Beck14}, and cooperative control \cite{Giselsson13}.

To address convex multi-agent optimization in an efficient, robust, and scalable way, distributed optimization algorithms have been substantially exploited, which allow each agent to reach an optimal or suboptimal decision by repeatedly exchanging its own information with neighbors \cite{Rabbat04,Nedic10,Lee13,LinP16,QuG17b,Jakovetic14,ShiW15,QuG17,XiC17,Nedic15,Nedic16b,Nedic17,Johansson06,Boyd11,Koshal11,Duchi12,ZhuM12,Giselsson13,Patrinos13,Beck14,ChangTH14,Necoara14,Bianchi16,Margellos16,Johansson09,Ram09,Wei13a,Wei13b,Varagnolo16,LuJ12,Kia15,LouY16}. One typical approach is to let the agents perform consensus operations so as to mix their decisions that are updated using first-order information of their local objectives (e.g., \cite{Nedic10,Lee13,Jakovetic14,Nedic15,ShiW15,LinP16,QuG17b,Nedic16b,Nedic17,QuG17,XiC17}). Recently, rates of convergence to optimality have been established for a few consensus-based algorithms. By assuming that the problem is unconstrained and smooth (i.e., the gradient of each local objective is Lipschitz) and that the network is fixed, the consensus-based multi-step gradient methods \cite{Jakovetic14,ShiW15,QuG17,XiC17} are able to achieve sublinear rates of convergence, and also linear rates if the local objectives are further (restricted) strongly convex. Unlike these algorithms, the Subgradient-Push method \cite{Nedic15}, the Gradient-Push method \cite{Nedic16b}, the DIGing algorithm \cite{Nedic17}, and the Push-DIGing algorithm \cite{Nedic17} can be implemented over time-varying networks and still provide convergence rate guarantees. Specifically, Subgradient-Push converges to optimality at a sublinear rate of $O(\ln k/\sqrt{k})$ for unconstrained, nonsmooth problems with bounded subgradients \cite{Nedic15}. In addition, when the problem is unconstrained, strongly convex, and smooth, an $O(\ln k/k)$ rate is established for Gradient-Push \cite{Nedic16b}, and linear rates are provided for DIGing and Push-DIGing \cite{Nedic17}.

Another standard approach is to utilize dual decomposition techniques, which often lead to a dual problem with a decomposable structure, so that it can be solved in a distributed fashion by classic optimization methods including the gradient projection method, the accelerated gradient methods, the method of multipliers, and their variants (e.g., \cite{Johansson06,Boyd11,Koshal11,Duchi12,ZhuM12,Giselsson13,Patrinos13,Beck14,ChangTH14,Necoara14,Bianchi16,Margellos16}). Compared with the aforementioned consensus-based primal methods, many distributed dual/primal-dual algorithms can handle more complicated coupling constraints, yet still manage to achieve sublinear rates of convergence to dual and primal optimality when the dual function is smooth, and achieve linear rates when the dual function is also strongly concave. Despite this advantage, most of such methods require a fixed network topology. Although the primal-dual subgradient methods in \cite{ZhuM12}, the primal-dual perturbation method in \cite{ChangTH14}, and the proximal-minimization-based method in \cite{Margellos16} cope with time-varying agent networks, they only guarantee asymptotic convergence to optimality and no results on convergence rate are provided. In addition to the above two approaches, there are other lines of research on distributed optimization, including incremental optimization methods (e.g., \cite{Rabbat04,Johansson09,Ram09}), 
distributed Newton methods (e.g., \cite{Wei13a,Wei13b,Varagnolo16}), 
and continuous-time distributed optimization algorithms (e.g., \cite{LuJ12,Kia15,LouY16}).

This paper is motivated by the lack of distributed optimization algorithms in the literature that are able to address \emph{constrained} convex multi-agent optimization at a guaranteed \emph{convergence rate} over \emph{time-varying} networks. We propose, in this paper, a family of distributed Fenchel dual gradient methods that are able to solve a class of constrained multi-agent optimization problems at sublinear rates on time-varying \xw{undirected} networks, where the local objectives of the agents are strongly convex but not necessarily differentiable and the global constraint is the intersection of the local convex constraints of the agents. 

To develop such algorithms, we first derive the Fenchel dual of the multi-agent optimization problem, which consists of a separable, smooth dual function and a coupling linear constraint. Additionally, the gradient of the Fenchel dual function can be evaluated in parallel by the agents. We then utilize a class of weighted gradient methods to solve the Fenchel dual problem, which can be implemented over time-varying networks in a distributed fashion and can be viewed as a generalization of the distributed weighted gradient methods in \cite{XiaoL06b,Lakshmanan08}. We show that the proposed Fenchel dual gradient algorithms asymptotically converge to both dual and primal optimality if the agents and their infinitely occurring interactions form a connected graph. We also show that the dual optimality is reached at an $O(1/k)$ rate and the primal optimality is achieved at an $O(1/\sqrt{k})$ rate if the underlying agent interaction graph during every $B$ iterations is connected. Finally, the efficacy of the Fenchel dual gradient methods is illustrated through numerical examples.

The outline of the paper is as follows: Section~\ref{sec:probform} formulates the multi-agent optimization problem, and Section~\ref{sec:dualalgo} develops the distributed Fenchel dual gradient methods. Section~\ref{sec:convanal} establishes the convergence results of the proposed algorithms. Section~\ref{sec:numericalexample} presents simulation results, and Section~\ref{sec:conclusion} concludes the paper. All the proofs are included in the appendix. \xw{This paper is a significantly improved version of an earlier, $6$-page conference paper \cite{Wu17b}.}

Throughout the paper, we use $\|\cdot\|$ to represent the Euclidean norm and $\|\cdot\|_1$ the $\ell_1$ norm. For any set $X\subseteq\mathbb{R}^d$, $\operatorname{int}X$ represents its interior and $|X|$ its cardinality. Let $P_X(x)\!=\!\operatorname{arg\;min}_{y\in X}\|x-y\|$ denote the projection of $x\in\mathbb{R}^d$ onto $X$, which uniquely exists if $X$ is closed and convex. The ball centered at $x\in\mathbb{R}^d$ with radius $r>0$ is denoted by $B(x,r):=\{y\in\mathbb{R}^d:\|y-x\|\le r\}$. The floor of a real number is represented by $\lfloor \cdot\rfloor$. For any $\mathbf{x}\in\mathbb{R}^{nd}$, $\mathbf{x}=(x_1^T,\ldots,x_n^T)^T$ means the even partition of $\mathbf{x}$ into $n$ blocks, i.e., $x_i\in\mathbb{R}^d$ $\forall i=1,\ldots,n$. For any function $f:\mathbb{R}^d\rightarrow\mathbb{R}$, $\partial f(x)$ denotes any subgradient of $f$ at $x\in\mathbb{R}^d$, i.e., $f(y)-f(x)-\partial f(x)^T(y-x)\ge0$ $\forall y\in\mathbb{R}^d$. If $f$ is differentiable, then $\nabla f(x)$ denotes the gradient of $f$ at $x\in\mathbb{R}^d$. In addition, $I_d$ is the $d\times d$ identity matrix, $O_d$ is the $d\times d$ zero matrix, $\mathbf{1}_d\in\mathbb{R}^d$ is the all-one vector, $\mathbf{0}_d\in\mathbb{R}^d$ is the all-zero vector, and $\otimes$ is the Kronecker product. For any matrices $M,M'\in\mathbb{R}^{n\times n}$, $M\preceq M'$ and $M'\succeq M$ both mean $M'-M$ is positive semidefinite. Also, $[M]_{ij}$ represents the $(i,j)$-entry of $M$, $\mathcal{R}(M)$ the range of $M$, and $\operatorname{Null}(M)$ the null space of $M$. If $M$ is a block diagonal matrix with diagonal blocks $M_1,\ldots,M_m$, we write it as $M=\operatorname{diag}(M_1,\ldots,M_m)$. If $M$ is symmetric positive semidefinite, we use $\lambda_i^{\downarrow}(M)\ge0$ to denote its $i$th largest eigenvalue and $M^{\dag}$ its Moore-Penrose pseudoinverse. 

%
%

\section{Problem Formulation}\label{sec:probform}

Consider a set $\mathcal{V}=\{1,2,\ldots,n\}$ of agents, where each agent $i\in\mathcal{V}$ possesses a local objective function $f_i:\mathbb{R}^d\rightarrow\mathbb{R}$ and a local constraint set $X_i\subseteq\mathbb{R}^d$. All of the $n\ge2$ agents attempt to solve the constrained optimization problem
\begin{align}
\begin{array}{ll}\underset{x\in\mathbb{R}^d}{\mbox{minimize}} & \sum_{i\in\mathcal{V}}f_i(x)\\ \operatorname{subject\,to} & x\in \bigcap_{i\in\mathcal{V}}X_i,\end{array}\label{eq:problem}
\end{align}
which satisfies the following assumption.

\begin{assumption}\label{asm:problem}
(a) Each $f_i$, $i\in\mathcal{V}$ is strongly convex over $X_i$ with convexity parameter $\theta_i>0$, i.e., for any $x,y\in X_i$ and any subgradient $\partial f_i(x)$ of $f_i$ at $x$, $f_i(y)-f_i(x)-\partial f_i(x)^T(y-x)\ge\theta_i\|y-x\|^2/2$.\\
(b) $\mathbf{0}_d\in\operatorname{int}\bigcap_{i\in\mathcal{V}}X_i$.
\end{assumption}

Assumption~\ref{asm:problem} ensures the existence of a unique optimal solution $x^\star\in\bigcap_{i\in\mathcal{V}}X_i$ to problem~\eqref{eq:problem}. Notice that Assumption~\ref{asm:problem}(a) is a common assumption for distributed optimization methods with convergence rate guarantees (e.g., \cite{Giselsson13,Patrinos13,Beck14,Necoara14,Nedic16b,Nedic17}). \xw{In addition, unlike many existing works that require each $f_i$ to be continuously differentiable (e.g., \cite{QuG17b,Jakovetic14,ShiW15,QuG17,XiC17,Nedic16b,Nedic17,Koshal11,ChangTH14,Necoara14,Wei13a,Wei13b,Varagnolo16,LuJ12,LouY16}), here each $f_i$ is not necessarily differentiable.} Also, Assumption~\ref{asm:problem}(b) can always be replaced with the less restrictive condition $\operatorname{int}\bigcap_{i\in\mathcal{V}}X_i\neq\emptyset$, which is also assumed in \cite{Nedic10,Lee13,LinP16,Margellos16}. To see this, suppose $x'\in\operatorname{int}\bigcap_{i\in\mathcal{V}}X_i$ for some $x'\neq\mathbf{0}_d$. Consider the change of variable $z=x-x'$, and write each $f_i(x)$ and $X_i$ as $f_i(z+x')$ and $\{z\in\mathbb{R}^d:z+x'\in X_i\}$, respectively. Then, the resulting new problem with the decision variable $z$ is in the form of \eqref{eq:problem} and satisfies Assumption~\ref{asm:problem}.

We model the $n$ agents and their interactions as an undirected graph $\mathcal{G}^k=(\mathcal{V},\mathcal{E}^k)$ with time-varying topologies, where $k\in\{0,1,\ldots\}$ represents time, $\mathcal{V}=\{1,2,\ldots,n\}$ is the set of nodes (i.e., the agents), and $\mathcal{E}^k\subseteq\{\{i,j\}:i,j\in\mathcal{V},i\neq j\}$ is the set of links (i.e., the agent interactions) at time $k$. Without loss of generality, we assume that $\mathcal{E}^k\neq\emptyset$ $\forall k\ge0$. In addition, for each node $i\in\mathcal{V}$, let $\mathcal{N}_i^k=\{j\in\mathcal{V}:\{i,j\}\in\mathcal{E}^k\}$ be the set of its neighbors (i.e., the nodes that it directly communicates with) at time $k$.

To enable cooperation of the nodes, we need to impose an assumption on network connectivity, so that the local decisions of the nodes can be mixed across the network. To this end, define $\mathcal{E}_{\infty}:=\{\{i,j\}:\{i,j\}\in \mathcal{E}^k~\text{for infinite many}~k\geq 0\}$. Then, consider the following assumption.

\begin{assumption}[Infinite connectivity]\label{asm:infiniteconnect}
The graph $(\mathcal{V}, \mathcal{E}_{\infty})$ is connected.
\end{assumption}

Assumption~\ref{asm:infiniteconnect} is equivalent to the connectivity of the graph $(\mathcal{V},\cup_{t=k}^\infty\mathcal{E}^t)$ for all $k\ge0$. \xw{This is a minimal connectivity condition for distributed optimization algorithms to converge to optimality, which ensures every node to directly or indirectly influence any other nodes infinitely many times \cite{Nedic10}. As Assumption~\ref{asm:infiniteconnect} does not quantify how quickly the local decisions of the nodes diffuse throughout the network, we need a stronger connectivity condition to derive performance guarantees for the algorithms to be developed.}

\begin{assumption}[$B$-connectivity]\label{asm:Bconnected}
\xw{There exists an integer $B>0$ such that for any integer $k\ge0$, the graph $(\mathcal{V},\bigcup_{t=kB}^{(k+1)B-1}\mathcal{E}^t)$ is connected.}
\end{assumption}

Assumption~\ref{asm:Bconnected} forces each node to have an impact on the others in the time intervals $[kB,(k+1)B-1]$ $\forall k\ge0$ of length $B$. Compared with Assumption~\ref{asm:infiniteconnect}, Assumption~\ref{asm:Bconnected} is more restrictive but more commonly adopted in the literature (e.g., \cite{Ram09,Nedic10,ZhuM12,Lee13,ChangTH14,Nedic15,LinP16,Nedic16b,LouY16,Margellos16,Nedic17}). 


%
%
\section{Fenchel Dual Gradient Algorithms}\label{sec:dualalgo}

In this section, we develop a family of distributed algorithms to solve \eqref{eq:problem} based on Fenchel duality.

\subsection{Fenchel Dual Problem}\label{ssec:Fenchel}

We first transform \eqref{eq:problem} into the following equivalent problem:
\begin{align}
\begin{array}{ll}\underset{\mathbf{x}\in\mathbb{R}^{nd}}{\mbox{minimize}} & F(\mathbf{x}):=\sum_{i\in\mathcal{V}}f_i(x_i)\\ \operatorname{subject\,to} & x_i\in X_i,\quad\forall i\in\mathcal{V},\\ & \mathbf{x}\in S, \end{array}\label{eq:equivalentproblem}
\end{align}
where $\mathbf{x}=(x_1^T,\ldots,x_n^T)^T$ and $S:=\{\mathbf{x}\in\mathbb{R}^{nd}:x_1=x_2=\cdots=x_n\}$. Note that problem~\eqref{eq:equivalentproblem} has a unique optimal solution $\mathbf{x}^\star=((x^\star)^T,\ldots,(x^\star)^T)^T$, where $x^\star\in\bigcap_{i\in\mathcal{V}}X_i$ is the unique optimum of problem~\eqref{eq:problem}. In addition, its optimal value $F^\star$ is equal to that of problem~\eqref{eq:problem}.

Next, we construct the Fenchel dual problem \cite{Bertsekas99} of \eqref{eq:equivalentproblem}. To this end, we introduce a function $q_i:\mathbb{R}^d\times\mathbb{R}^d\rightarrow\mathbb{R}$ for each $i\in\mathcal{V}$ defined as
\begin{align*}
q_i(x_i,w_i)=w_i^Tx_i-f_i(x_i).
\end{align*}
The conjugate convex function $d_i:\mathbb{R}^d\rightarrow\mathbb{R}$ is then given by
\begin{align*}
d_i(w_i) = \sup_{x_i\in X_i}q_i(x_i,w_i).
\end{align*}
With the above, the Fenchel dual problem of \eqref{eq:equivalentproblem} can be described as
\begin{align}
\begin{array}{ll}\underset{\mathbf{w}\in\mathbb{R}^{nd}}{\mbox{maximize}} & -D(\mathbf{w}):=-\sum_{i\in\mathcal{V}} d_i(w_i)\\ \operatorname{subject\,to} & \mathbf{w}\in S^\bot, \end{array}\label{eq:dualprob}
\end{align}
where $\mathbf{w}=(w_1^T,\ldots,w_n^T)^T$ and $S^\bot:=\{\mathbf{w}\in\mathbb{R}^{nd}:w_1+w_2+\cdots+w_n=\mathbf{0}_d\}$ is the orthogonal complement of $S$. Note that \eqref{eq:dualprob} is a convex optimization problem. Also, with Assumption~\ref{asm:problem}, it can be shown that strong duality between \eqref{eq:equivalentproblem} and \eqref{eq:dualprob} holds, i.e., the optimal value $-D^\star$ of \eqref{eq:dualprob} equals $F^\star$, and that the optimal set of \eqref{eq:dualprob} is nonempty \cite{Bertsekas99}. Moreover, $\mathbf{w}^\star=((w_1^\star)^T,\ldots,(w_n^\star)^T)^T\in S^\bot$ is an optimal solution to \eqref{eq:dualprob} if and only if $\nabla d_i(w_i^\star)=\nabla d_j(w_j^\star)$ $\forall i,j\in\mathcal{V}$ \cite[Lemma 3.1]{Lakshmanan08}, i.e., $\nabla D(\mathbf{w}^\star)\in S$.

Below we acquire a couple of properties regarding the Fenchel dual problem~\eqref{eq:dualprob}. Notice from Assumption~\ref{asm:problem}(a) that for each $i\in\mathcal{V}$ and each $w_i\in\mathbb{R}^d$, there uniquely exists
\begin{align}
\tilde{x}_i(w_i):=\operatorname{arg\;max}_{x\in X_i} q_i(x,w_i).\label{eq:primalsolution}
\end{align}
Thus, $d_i$ is differentiable \cite{LuJ16} and
\begin{align}
\nabla d_i(w_i) = \tilde{x}_i(w_i).\label{eq:dual_gradient}
\end{align}
The following proposition shows that $d_i$ is smooth, i.e., $\nabla d_i$ is Lipschitz.

\begin{proposition}{\cite[Lemma II.1]{Beck14}}\label{pro:Lipschitz}
Suppose Assumption~\ref{asm:problem} holds. Then, for each $i\in\mathcal{V}$, $\nabla d_i$ is Lipschitz continuous with Lipschitz constant $L_i=1/\theta_i$, where $\theta_i>0$ is defined in Assumption~\ref{asm:problem}, i.e., $\|\nabla d_i(u_i)-\nabla d_i(v_i)\|\le L_i\|u_i-v_i\|$ $\forall u_i,v_i\in\mathbb{R}^d$.
\end{proposition}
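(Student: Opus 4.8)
The plan is to exploit the identity $\nabla d_i(w_i)=\tilde{x}_i(w_i)$ from \eqref{eq:dual_gradient} and bound how far the maximizer map $\tilde{x}_i$ can move when its argument changes. Fix $u_i,v_i\in\mathbb{R}^d$ and set $x_u:=\tilde{x}_i(u_i)$ and $x_v:=\tilde{x}_i(v_i)$, which are the unique maximizers guaranteed by \eqref{eq:primalsolution}. Since $\nabla d_i(u_i)-\nabla d_i(v_i)=x_u-x_v$, it suffices to prove the contraction-type estimate $\|x_u-x_v\|\le\frac{1}{\theta_i}\|u_i-v_i\|$.

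First I would write down the first-order optimality conditions for the two constrained concave maximizations $\max_{x\in X_i}(u_i^Tx-f_i(x))$ and $\max_{x\in X_i}(v_i^Tx-f_i(x))$. Because $f_i$ is only strongly convex and need not be differentiable, these take the form of variational inequalities rather than a stationarity equation: there exist subgradients $s_u\in\partial f_i(x_u)$ and $s_v\in\partial f_i(x_v)$ with $(u_i-s_u)^T(x-x_u)\le0$ and $(v_i-s_v)^T(x-x_v)\le0$ for all $x\in X_i$. Evaluating the first inequality at the feasible point $x=x_v$ and the second at $x=x_u$, and adding them, the $X_i$-dependent terms combine so that after rearrangement one obtains $(u_i-v_i)^T(x_u-x_v)\ge(s_u-s_v)^T(x_u-x_v)$.

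Next I would invoke strong convexity (Assumption~\ref{asm:problem}(a)). Writing its defining inequality at the ordered pairs $(x_u,x_v)$ and $(x_v,x_u)$ and summing yields the strong monotonicity of the subdifferential, namely $(s_u-s_v)^T(x_u-x_v)\ge\theta_i\|x_u-x_v\|^2$. Chaining this with the display from the previous step and bounding the left-hand side by Cauchy--Schwarz gives $\|u_i-v_i\|\,\|x_u-x_v\|\ge(u_i-v_i)^T(x_u-x_v)\ge\theta_i\|x_u-x_v\|^2$. Dividing through by $\|x_u-x_v\|$ (the case $x_u=x_v$ being trivial) yields $\|x_u-x_v\|\le\frac{1}{\theta_i}\|u_i-v_i\|$, which is exactly the claimed Lipschitz bound with $L_i=1/\theta_i$.

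The step I expect to be the main obstacle is the correct derivation of the optimality conditions in the second paragraph. Since the objective is nonsmooth and the maximization is constrained to $X_i$, one cannot simply set a gradient to zero; instead one must use the normal-cone / variational-inequality characterization of optimality for a concave maximum over a convex set, select valid subgradients $s_u,s_v$, and then combine the two conditions at the cross points $x_v$ and $x_u$ so that the terms involving the constraint set cancel cleanly. Once that cancellation is secured, the remaining monotonicity and Cauchy--Schwarz steps are routine.
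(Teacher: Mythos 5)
Your proof is correct and is essentially the argument behind the result, which the paper does not prove itself but simply cites as \cite[Lemma II.1]{Beck14}: variational-inequality optimality at the two maximizers, strong monotonicity of $\partial f_i$ obtained by summing the strong-convexity inequality of Assumption~\ref{asm:problem}(a) at $(x_u,x_v)$ and $(x_v,x_u)$, and Cauchy--Schwarz. The one step worth stating explicitly is that the existence of a subgradient $s_u\in\partial f_i(x_u)$ satisfying $(u_i-s_u)^T(x-x_u)\le 0$ for all $x\in X_i$ rests on the sum rule $\partial\bigl(f_i+\iota_{X_i}\bigr)(x_u)=\partial f_i(x_u)+N_{X_i}(x_u)$, which is justified here because $f_i$ is real-valued convex on $\mathbb{R}^d$ and hence continuous, so the Moreau--Rockafellar qualification holds.
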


\xw{In fact, the strong convexity of $f_i$ on $X_i$ assumed in Assumption~\ref{asm:problem}(a) is both sufficient and necessary for the smoothness of $d_i$ \cite{Beck14}.} 

Likewise, we can see that $D(\mathbf{w})$ is differentiable and
\begin{align}
\nabla D(\mathbf{w})=\tilde{\mathbf{x}}(\mathbf{w}):=(\tilde{x}_1(w_1)^T,\ldots,\tilde{x}_n(w_n)^T)^T.\label{eq:dualgradfull}
\end{align}
\xw{According to \eqref{eq:primalsolution} and \eqref{eq:dualgradfull}, if each $w_i$ is known to node $i$, then the gradient of the Fenchel dual function $D$ can be evaluated in parallel by the nodes, while the Lagrange dual of (equivalent forms of) problem~\eqref{eq:equivalentproblem} does not have such a favorable feature when the network is time-varying and not necessarily connected at each time instance.} Further, notice that $F(\mathbf{x})$ in problem~\eqref{eq:equivalentproblem} is strongly convex over $X_1\times\cdots\times X_n$ with convexity parameter $\theta_{\min}:=\min_{i\in\mathcal{V}}\theta_i$. Also note that $D(\mathbf{w})=\sup_{\mathbf{x}\in X_1\times\cdots\times X_n}\mathbf{w}^T\mathbf{x}-F(\mathbf{x})$. Like Proposition~\ref{pro:Lipschitz}, we can establish the Lipschitz continuity of $\nabla D$.

\begin{corollary}\label{cor:Lipschitz}
Suppose Assumption~\ref{asm:problem} holds. Then, $\nabla D$ is Lipschitz continuous with Lipschitz constant $L=1/\theta_{\min}$.
\end{corollary}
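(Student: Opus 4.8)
The plan is to recognize that Corollary~\ref{cor:Lipschitz} is simply the aggregate version of Proposition~\ref{pro:Lipschitz}, obtained by treating the stacked quantities $F$, $D$, and the product constraint $X_1\times\cdots\times X_n$ exactly as Proposition~\ref{pro:Lipschitz} treats the per-node quantities $f_i$, $d_i$, and $X_i$, but now with the single convexity parameter $\theta_{\min}=\min_{i\in\mathcal{V}}\theta_i$. First I would verify that the pair $(F,\,X_1\times\cdots\times X_n)$ satisfies the same structural hypotheses as $(f_i,X_i)$ in Assumption~\ref{asm:problem}. For strong convexity, I would expand the defining inequality blockwise: for any $\mathbf{x},\mathbf{y}\in X_1\times\cdots\times X_n$ and any subgradient $\partial F(\mathbf{x})$,
\begin{align*}
&F(\mathbf{y})-F(\mathbf{x})-\partial F(\mathbf{x})^T(\mathbf{y}-\mathbf{x})\\
&=\sum_{i\in\mathcal{V}}\bigl(f_i(y_i)-f_i(x_i)-\partial f_i(x_i)^T(y_i-x_i)\bigr)\\
&\ge\sum_{i\in\mathcal{V}}\frac{\theta_i}{2}\|y_i-x_i\|^2\ge\frac{\theta_{\min}}{2}\|\mathbf{y}-\mathbf{x}\|^2,
\end{align*}
using the separability of $F$ and Assumption~\ref{asm:problem}(a) applied to each block. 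The interior condition transfers as well, since $\mathbf{0}_d\in\operatorname{int}X_i$ for each $i$ forces $\mathbf{0}_{nd}\in\operatorname{int}(X_1\times\cdots\times X_n)$.

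Next I would identify $D$ with the conjugate of $F$ over the product set. Since $D(\mathbf{w})=\sup_{\mathbf{x}\in X_1\times\cdots\times X_n}(\mathbf{w}^T\mathbf{x}-F(\mathbf{x}))$ and, by \eqref{eq:primalsolution} and \eqref{eq:dualgradfull}, the maximizer $\tilde{\mathbf{x}}(\mathbf{w})$ uniquely exists with $\nabla D(\mathbf{w})=\tilde{\mathbf{x}}(\mathbf{w})$, the hypotheses of Proposition~\ref{pro:Lipschitz} hold for $(F,\,X_1\times\cdots\times X_n,\,\theta_{\min})$. Invoking that proposition verbatim then yields that $\nabla D$ is Lipschitz with constant $L=1/\theta_{\min}$, which is the claim.

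As a self-contained alternative that avoids re-deriving the conjugate machinery, I would instead exploit the block structure of $\nabla D$ directly. Using \eqref{eq:dualgradfull}, for any $\mathbf{u},\mathbf{v}\in\mathbb{R}^{nd}$,
\begin{align*}
\|\nabla D(\mathbf{u})-\nabla D(\mathbf{v})\|^2&=\sum_{i\in\mathcal{V}}\|\nabla d_i(u_i)-\nabla d_i(v_i)\|^2\\
&\le\sum_{i\in\mathcal{V}}L_i^2\|u_i-v_i\|^2\le L^2\sum_{i\in\mathcal{V}}\|u_i-v_i\|^2=L^2\|\mathbf{u}-\mathbf{v}\|^2,
\end{align*}
where the first inequality is Proposition~\ref{pro:Lipschitz} applied nodewise and the second uses $L_i=1/\theta_i\le1/\theta_{\min}=L$. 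Taking square roots gives the result.

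I do not anticipate a genuine obstacle, since this is a direct corollary. The only point requiring care is ensuring that the single Lipschitz constant is $1/\theta_{\min}$ rather than something larger such as $\sqrt{\sum_{i\in\mathcal{V}}L_i^2}$; this is precisely why the uniform bound $L_i\le L$ is applied \emph{inside} the sum before factoring in the alternative argument, and why the aggregate argument hinges on the worst-case convexity parameter $\theta_{\min}$ rather than the individual $\theta_i$.
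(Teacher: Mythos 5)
Your proposal is correct and its main argument is exactly the paper's route: the paper proves Corollary~\ref{cor:Lipschitz} precisely by observing that $F$ is strongly convex over $X_1\times\cdots\times X_n$ with parameter $\theta_{\min}$ and that $D(\mathbf{w})=\sup_{\mathbf{x}\in X_1\times\cdots\times X_n}\mathbf{w}^T\mathbf{x}-F(\mathbf{x})$, then applying Proposition~\ref{pro:Lipschitz} to this aggregate pair. Your self-contained blockwise alternative, bounding $\|\nabla D(\mathbf{u})-\nabla D(\mathbf{v})\|^2=\sum_{i\in\mathcal{V}}\|\nabla d_i(u_i)-\nabla d_i(v_i)\|^2$ with $L_i\le L$ inside the sum, is also valid and yields the same constant $L=1/\theta_{\min}$.
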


Finally, we show that the dual optimal set and the level sets of $D$ on $S^\bot$ are bounded.
\begin{proposition}\label{pro:boundedlevelset}
Suppose Assumption~\ref{asm:problem} holds. For any optimal solution $\mathbf{w}^\star\in S^\bot$ of problem~\eqref{eq:dualprob},
\begin{align}
\|\mathbf{w}^\star\|\le\frac{(\sum_{i\in\mathcal{V}}\max_{x_i\in B(\mathbf{0}_d,r_c)}f_i(x_i))-F^\star}{r_c}<\infty,\label{eq:dualoptimumbound}
\end{align}
where $r_c\in(0,\infty)$ is such that $B(\mathbf{0}_d,r_c)\subseteq\bigcap_{i\in\mathcal{V}}X_i$. In addition, for any $\mathbf{w}\in S^\bot$, the level set $S_0(\mathbf{w}):=\{\mathbf{w}'\in S^\bot:D(\mathbf{w}')\leq D(\mathbf{w})\}$ is compact.
\end{proposition}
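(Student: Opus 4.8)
The plan is to extract a single lower bound on $D$ that drives both assertions. The workhorse inequality I would prove first is that, for every $\mathbf{w}=(w_1^T,\ldots,w_n^T)^T\in\mathbb{R}^{nd}$,
\[ D(\mathbf{w})\ge r_c\sum_{i\in\mathcal{V}}\|w_i\|-\sum_{i\in\mathcal{V}}\max_{x_i\in B(\mathbf{0}_d,r_c)}f_i(x_i). \]
To obtain it, fix $i$ and recall $d_i(w_i)=\sup_{x\in X_i}(w_i^Tx-f_i(x))$. Since $B(\mathbf{0}_d,r_c)\subseteq\bigcap_{j\in\mathcal{V}}X_j\subseteq X_i$, I would restrict this supremum to the ball and evaluate the bracket at the point $x_i=r_cw_i/\|w_i\|$ (which maximizes the linear term $w_i^Tx$ over the ball; take any point when $w_i=\mathbf{0}_d$), giving $d_i(w_i)\ge r_c\|w_i\|-f_i(x_i)\ge r_c\|w_i\|-\max_{x\in B(\mathbf{0}_d,r_c)}f_i(x)$. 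Summing over $i\in\mathcal{V}$ yields the inequality. The maxima are finite because each $f_i$, being convex on $X_i$, is continuous on the compact ball, which by Assumption~\ref{asm:problem}(b) may be taken inside $\operatorname{int}\bigcap_{j\in\mathcal{V}}X_j$.

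For the bound~\eqref{eq:dualoptimumbound}, I would invoke strong duality between \eqref{eq:equivalentproblem} and \eqref{eq:dualprob}, which gives $D(\mathbf{w}^\star)=-F^\star$ at any dual optimum $\mathbf{w}^\star$. Plugging this into the workhorse inequality and rearranging gives $r_c\sum_{i\in\mathcal{V}}\|w_i^\star\|\le(\sum_{i\in\mathcal{V}}\max_{x_i\in B(\mathbf{0}_d,r_c)}f_i(x_i))-F^\star$; dividing by $r_c$ and using $\|\mathbf{w}^\star\|=\sqrt{\sum_{i\in\mathcal{V}}\|w_i^\star\|^2}\le\sum_{i\in\mathcal{V}}\|w_i^\star\|$ then yields \eqref{eq:dualoptimumbound}, whose right-hand side is finite by the preceding remark.

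For compactness of $S_0(\mathbf{w})$, I would treat closedness and boundedness separately. Closedness holds because $D$ is continuous (it is differentiable by Corollary~\ref{cor:Lipschitz}) and $S^\bot$ is a subspace, so $S_0(\mathbf{w})$ is the intersection of a closed sublevel set with a closed set. For boundedness, I would apply the workhorse inequality to an arbitrary $\mathbf{w}'\in S_0(\mathbf{w})$: from $D(\mathbf{w}')\le D(\mathbf{w})$ we get $r_c\sum_{i\in\mathcal{V}}\|w_i'\|\le D(\mathbf{w})+\sum_{i\in\mathcal{V}}\max_{x_i\in B(\mathbf{0}_d,r_c)}f_i(x_i)$, a bound on $\|\mathbf{w}'\|$ independent of $\mathbf{w}'$; hence $S_0(\mathbf{w})$ is bounded and therefore compact.

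I do not expect a genuine obstacle here; the substantive observation is simply that the $-f_i$ contribution is uniformly bounded on the ball while the linear term grows like $r_c\|w_i\|$, which coerces both the optimal set and the sublevel sets. The only points needing care are verifying finiteness of $\max_{x_i\in B(\mathbf{0}_d,r_c)}f_i(x_i)$ via continuity of the real-valued convex $f_i$ on a ball inside the interior of its domain, and tracking the sign conventions of the maximization dual so that strong duality reads $D(\mathbf{w}^\star)=-F^\star$.
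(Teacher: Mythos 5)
Your proof is correct, and for the bound \eqref{eq:dualoptimumbound} it coincides with the paper's argument: both evaluate the supremum defining $d_i$ at the point $r_c w_i/\|w_i\|$ of the ball $B(\mathbf{0}_d,r_c)$ (taking any point of the ball when $w_i=\mathbf{0}_d$), and then combine $r_c\sum_{i\in\mathcal{V}}\|w_i^\star\|\le D^\star+\sum_{i\in\mathcal{V}}\max_{x_i\in B(\mathbf{0}_d,r_c)}f_i(x_i)$ with strong duality $D^\star=-F^\star$ and $\|\mathbf{w}^\star\|\le\sum_{i\in\mathcal{V}}\|w_i^\star\|$. Where you genuinely diverge is the compactness of the level sets. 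The paper applies its estimate only at dual optima, concludes that the dual optimal set is compact, and then invokes the convex-analysis fact \cite[proposition 1.4.5]{Bertsekas09} (nonempty level sets of a closed convex function share a common recession cone) to transfer compactness to every $S_0(\mathbf{w})$. You instead promote the same pointwise evaluation to a coercivity inequality, $D(\mathbf{w})\ge r_c\sum_{i\in\mathcal{V}}\|w_i\|-\sum_{i\in\mathcal{V}}\max_{x_i\in B(\mathbf{0}_d,r_c)}f_i(x_i)$ for \emph{all} $\mathbf{w}\in\mathbb{R}^{nd}$, which makes boundedness of every sublevel set immediate, with closedness following from continuity of $D$ and closedness of the subspace $S^\bot$. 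Your route is self-contained (no recession-cone machinery) and in fact slightly stronger: it shows $D$ is coercive on all of $\mathbb{R}^{nd}$, so all sublevel sets of $D$ are bounded, not only their intersections with $S^\bot$; the paper's route is shorter given the cited reference. You are also more explicit than the paper on one small point: the finiteness of $\max_{x_i\in B(\mathbf{0}_d,r_c)}f_i(x_i)$, which the paper attributes to compactness of the ball alone, whereas you justify it via continuity of the real-valued convex $f_i$ on a ball that, by Assumption~\ref{asm:problem}(b), can be chosen inside $\operatorname{int}\bigcap_{i\in\mathcal{V}}X_i$.
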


\begin{proof}
See Appendix~\ref{ssec:proofofpro:boundedlevelset}.
\end{proof}

\xw{The boundedness of the dual optimal set relies on the nonemptyness of $\operatorname{int}\bigcap_{i\in\mathcal{V}}X_i$ assumed by Assumption~\ref{asm:problem}(b), without which the dual optimal set can be unbounded (e.g., $X_i=\{(z_1,z_2)^T\in\mathbb{R}^2:z_1=0\}$ $\forall i\in\mathcal{V}$).}

\subsection{Algorithms}\label{ssec:algorithm}

In \cite{XiaoL06b,Lakshmanan08}, a set of weighted gradient methods are proposed to solve a network resource allocation problem, which can be cast in the form of \eqref{eq:dualprob}. Inspired by this, we consider a class of weighted gradient methods as follows: Starting from an arbitrary $\mathbf{w}^0\in S^\bot$, the subsequent iterates are generated by
\begin{align}
\mathbf{w}^{k+1}=\mathbf{w}^k-\alpha^k(H_{\mathcal{G}^k}\otimes I_d)\nabla D(\mathbf{w}^k),\quad\forall k\ge0,\label{eq:weightgrad}
\end{align}
where $\alpha^k>0$ is the step-size and $H_{\mathcal{G}^k}\in\mathbb{R}^{n\times n}$ is the weight matrix that depends on the topology of $\mathcal{G}^k$, defined as
\begin{align}
[H_{\mathcal{G}^k}]_{ij}=\begin{cases}\sum\limits_{s\in\mathcal{N}_i^k} h_{is}^k, & \text{if }i=j,\\-h_{ij}^k,& \text{if }\{i,j\}\in\mathcal{E}^k,\\0, & \text{otherwise},\end{cases}
\quad\forall i,j\in\mathcal{V}.
\label{eq:weightmatrix}
\end{align}
We require $h_{ij}^k=h_{ji}^k>0$ $\forall\{i,j\}\in \mathcal{E}^k$ $\forall k\ge0$. We also assume that there exists a finite interval $[\underline{h},\bar{h}]$ such that
\begin{align}
h_{ij}^k\in [\underline{h},\bar{h}]\subset(0,\infty),\quad\forall k\ge0,\;\forall i\in\mathcal{V},\;\forall j\in\mathcal{N}_i^k.\label{eq:weight}
\end{align}
Since $\mathcal{E}^k\neq\emptyset$, $H_{\mathcal{G}^k}\neq O_n$ for any $k\ge0$. Moreover, $H_{\mathcal{G}^k}$ is symmetric positive semidefinite and $H_{\mathcal{G}^k}\mathbf{1}_n=\mathbf{0}_n$. Thus, using the same rationale as \cite{XiaoL06b,Lakshmanan08}, the proposition below shows that as long as $\mathbf{w}^0$ is feasible, so are $\mathbf{w}^k$ $\forall k\ge1$.

\begin{proposition}\label{pro:dualfeasible}
Let $(\mathbf{w}^k)_{k=0}^\infty$ be the iterates generated by \eqref{eq:weightgrad}. If $\mathbf{w}^0\in S^\bot$, then $(\mathbf{w}^k)_{k=0}^\infty\subseteq S^\bot$.
\end{proposition}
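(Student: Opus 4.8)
The plan is to argue by induction on $k$, relying on the single structural fact that each weight matrix annihilates the all-ones vector. First I would record a convenient reformulation of the constraint set: a vector $\mathbf{w}\in\mathbb{R}^{nd}$ lies in $S^\bot$ if and only if $(\mathbf{1}_n^T\otimes I_d)\mathbf{w}=\mathbf{0}_d$, since $(\mathbf{1}_n^T\otimes I_d)\mathbf{w}=\sum_{i\in\mathcal{V}}w_i$. This turns feasibility into the statement that $\mathbf{w}$ is killed by the fixed linear map $\mathbf{1}_n^T\otimes I_d$, which is the form that interacts cleanly with the Kronecker-structured update \eqref{eq:weightgrad}.

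For the induction, the base case $\mathbf{w}^0\in S^\bot$ holds by hypothesis. For the inductive step, I would assume $\mathbf{w}^k\in S^\bot$, apply $\mathbf{1}_n^T\otimes I_d$ to both sides of \eqref{eq:weightgrad}, and use the mixed-product rule $(A\otimes B)(C\otimes D)=(AC)\otimes(BD)$ to combine the two Kronecker factors:
\[
(\mathbf{1}_n^T\otimes I_d)(H_{\mathcal{G}^k}\otimes I_d)=(\mathbf{1}_n^T H_{\mathcal{G}^k})\otimes I_d.
\]
The decisive input is the property recorded just before the proposition, namely $H_{\mathcal{G}^k}\mathbf{1}_n=\mathbf{0}_n$; since $H_{\mathcal{G}^k}$ is symmetric this gives $\mathbf{1}_n^T H_{\mathcal{G}^k}=\mathbf{0}_n^T$, so the entire gradient term vanishes regardless of the step-size $\alpha^k$ or the value of $\nabla D(\mathbf{w}^k)$. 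The surviving term is $(\mathbf{1}_n^T\otimes I_d)\mathbf{w}^k=\mathbf{0}_d$ by the inductive hypothesis, whence $(\mathbf{1}_n^T\otimes I_d)\mathbf{w}^{k+1}=\mathbf{0}_d$ and therefore $\mathbf{w}^{k+1}\in S^\bot$.

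I do not expect any genuine obstacle here: the result is essentially a bookkeeping consequence of the fact that $S^\bot$ is invariant under the range of $H_{\mathcal{G}^k}\otimes I_d$, which itself follows from $\mathcal{R}(H_{\mathcal{G}^k})\subseteq\{\mathbf{1}_n\}^\bot$. The only points needing any care are the two algebraic identities — the reformulation of $S^\bot$ through $\mathbf{1}_n^T\otimes I_d$ and the application of the Kronecker mixed-product rule — together with the use of symmetry to pass from the right-annihilation $H_{\mathcal{G}^k}\mathbf{1}_n=\mathbf{0}_n$ to the left-annihilation needed above. Presenting the argument in exactly this order keeps the one-line inductive step fully transparent.
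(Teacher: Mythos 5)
Your proof is correct and takes essentially the same route the paper relies on: it preserves feasibility by exploiting exactly the property recorded just before the proposition, namely that $H_{\mathcal{G}^k}$ is symmetric with $H_{\mathcal{G}^k}\mathbf{1}_n=\mathbf{0}_n$, so every update increment lies in $S^\bot$. Your induction with the Kronecker mixed-product identity merely spells out the one-line rationale the paper attributes to \cite{XiaoL06b,Lakshmanan08}.
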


\begin{rem}\label{rem:resourceallocation}
The weighted gradient method \eqref{eq:weightgrad} can be tuned to solve problems of minimizing $\sum_{i\in\mathcal{V}}d_i(w_i)$ subject to $\sum_{i\in\mathcal{V}}w_i=c$, $\forall c\in\mathbb{R}^d$. To do so, we can simply replace the initial condition $\mathbf{w}^0\in S^\bot$ with $\sum_{i\in\mathcal{V}}w_i^0=c$.
\end{rem}

Next, we introduce primal iterates to the weighted gradient method \eqref{eq:weightgrad} that is intended for the Fenchel dual problem~\eqref{eq:dualprob}. Note from \eqref{eq:weightmatrix} and \eqref{eq:dualgradfull} that \eqref{eq:weightgrad} can be written as
\begin{align*}
&x_i^k=\tilde{x}_i(w_i^k),\quad\forall i\in\mathcal{V},\displaybreak[0]\\
&w_i^{k+1}=w_i^k-\alpha^k\sum_{j\in\mathcal{N}_i^k}h_{ij}^k(x_i^k-x_j^k),\quad\forall i\in\mathcal{V},
\end{align*}
where $w_i^k\in\mathbb{R}^d$ is the $i$th $d$-dimensional block of $\mathbf{w}^k$ and $\tilde{x}_i(w_i^k)$ is defined in \eqref{eq:primalsolution}. We assign each $w_i^k$ and $x_i^k$ to node $i$ as its dual and primal iterates, with $x_i^k$ being node $i$'s estimate on the optimal solution $x^\star$ of problem~\eqref{eq:problem}. Thus, the above algorithm with both dual and primal iterates can be implemented in a distributed and possibly asynchronous way on the time-varying network, as is shown in Algorithm~\ref{alg:weightgrad}.

{
\renewcommand{\baselinestretch}{1.05}
\begin{algorithm} [ht]
\caption{\small Fenchel Dual Gradient Method}
\label{alg:weightgrad}
\begin{algorithmic}[1]
\small
\STATE \textbf{Initialization:} Each node $i\in\mathcal{V}$ selects $w_i^0\in\mathbb{R}^d$ so that $\sum_{j\in\mathcal{V}}w_j^0=\mathbf{0}_d$ (or simply sets $w_i^0=\mathbf{0}_d$), and sets
$x_i^{0}=\operatorname{arg\;max}_{x\in X_i}(w_i^{0})^Tx-f_i(x)$.
\FOR{ $k=0,1,\ldots$}
\STATE Each node $i\in\mathcal{V}$ with $\mathcal{N}_i^k\neq\emptyset$ sends its $x_i^k$ to all $j\in\mathcal{N}_i^k$.
\STATE Upon receiving $x_j^k$ $\forall j\in\mathcal{N}_i^k$, each node $i\in\mathcal{V}$ with $\mathcal{N}_i^k\neq\emptyset$ updates
$w_i^{k+1}=w_i^k-\alpha^k\sum_{j\in\mathcal{N}_i^k}h_{ij}^k(x_i^k-x_j^k)$.
\STATE Each node $i\in\mathcal{V}$ with $\mathcal{N}_i^k\neq\emptyset$ computes
$x_i^{k+1}=\operatorname{arg\;max}_{x\in X_i}(w_i^{k+1})^Tx-f_i(x)$.
\STATE Each node $i\in\mathcal{V}$ with $\mathcal{N}_i^k=\emptyset$ takes no action, i.e., $w_i^{k+1}=w_i^k$ and $x_i^{k+1}=x_i^{k}$.
\ENDFOR
\end{algorithmic}
\end{algorithm}
}
In Algorithm~\ref{alg:weightgrad}, the initial condition $\mathbf{w}^0\in S^\bot$ can simply be realized by setting $w_i^0=\mathbf{0}_d$ $\forall i\in\mathcal{V}$. Subsequently at each iteration, every node $i$ with at least one neighbor updates its dual iterate $w_i^k$ via local interactions with its current neighbors and then updates its primal iterate $x_i^k$ on its own. 

To implement Algorithm~\ref{alg:weightgrad}, each node $i$ needs to select the weights $h_{ij}^k$ $\forall j\in\mathcal{N}_i^k$ that satisfy $h_{ij}^k=h_{ji}^k$ in a predetermined interval $[\underline{h},\bar{h}]\subset(0,\infty)$, where $\underline{h}$ and $\bar{h}$ may or may not be related with $\mathcal{G}^k$ $\forall k\ge0$. This can be done through inexpensive interactions between neighboring nodes. Two typical examples of $H_{\mathcal{G}^k}$ are the graph Laplacian matrix
\begin{align}
[H_{\mathcal{G}^k}]_{ij}=[L_{\mathcal{G}^k}]_{ij}:=\begin{cases}|\mathcal{N}_i^k|, & \text{if }i=j,\\-1,& \text{if }\{i,j\}\in\mathcal{E}^k,\\0, & \text{otherwise,}\end{cases}\label{eq:Laplacian}
\end{align}
and the Metropolis weight matrix \cite{XiaoL06b}
\begin{align}
[H_{\mathcal{G}^k}]_{ij}=\begin{cases}\sum\limits_{s\in\mathcal{N}_i^k} \frac{1}{\max\{|\mathcal{N}_i^k|L_i, |\mathcal{N}_s^k|L_s\}}, & \text{if }i=j,\\-\frac{1}{\max\{|\mathcal{N}_i^k|L_i, |\mathcal{N}_j^k|L_j\}},& \text{if }\{i,j\}\in\mathcal{E}^k,\\0, & \text{otherwise}.\end{cases}
\label{eq:localweightmatrix}
\end{align}
When $H_{\mathcal{G}^k}$ is set to \eqref{eq:Laplacian}, each node $i$ does not need any additional efforts in computing the weights $h_{ij}^k$ $\forall j\in\mathcal{N}_i^k$ since they are $1$ by default. When $H_{\mathcal{G}^k}$ is set to \eqref{eq:localweightmatrix}, each node $i$ only needs to obtain from every neighbor $j\in\mathcal{N}_i^k$ the product of node $j$'s neighborhood size $|\mathcal{N}_j^k|$ and Lipschitz constant $L_j=1/\theta_j$ of $\nabla d_j$. 

The remaining parameter to be determined is the step-size $\alpha^k$. Later in Section~\ref{sec:convanal}, we will show that the following step-size condition is sufficient to guarantee the convergence of Algorithm~\ref{alg:weightgrad}: Suppose there is a finite interval $[\underline{\alpha},\bar{\alpha}]$ such that
\begin{align}
\alpha^k\in[\underline{\alpha},\bar{\alpha}]\subset(0,2/\delta),\quad\forall k\ge0,\label{eq:stepsize}
\end{align}
where $\delta>0$ can be any positive constant satisfying
\begin{align}
H_{\mathcal{G}^k}\preceq\delta\Lambda_L^{-1},\quad\forall k\geq 0,\label{eq:delta}
\end{align}
with $\Lambda_L:=\operatorname{diag}(L_1,\ldots,L_n)$.
Note that such $\delta$ always exists because $\Lambda_L^{-1}$ is positive definite and $H_{\mathcal{G}^k}$ is positive semidefinite. For example, we may choose $\delta=L\sup_{k\geq 0} \lambda_1^{\downarrow}(H_{\mathcal{G}^k})$, where $L=1/\theta_{\min}=\max_{i\in\mathcal{V}}L_i$. More conservatively, because $H_{\mathcal{G}^k}\preceq\bar{h}L_{\mathcal{G}^k}$ and $\lambda_1^{\downarrow}(L_{\mathcal{G}^k})\le n$, we can always let $\delta=L\bar{h}n$ and thus
\begin{align*}
[\underline{\alpha},\bar{\alpha}]\subset(0,\frac{2}{L\bar{h}n}).
\end{align*}
Since $\bar{h}$ can be predetermined and known to all the nodes, this condition only requires the nodes to obtain the global quantities $n$ and $L=\max_{i\in\mathcal{V}}L_i$, which can be computed decentralizedly by some consensus schemes (e.g., \cite{ChenJY06}). Below, we provide less conservative step-size conditions for the two specific choices of $H_{\mathcal{G}^k}$ in \eqref{eq:Laplacian} and \eqref{eq:localweightmatrix}, which also can be satisfied by the nodes without any centralized coordination.

\begin{example}\label{ex:stepsizeLaplacian}
When $H_{\mathcal{G}^k}$ is set to the graph Laplacian matrix $L_{\mathcal{G}^k}$ as in \eqref{eq:Laplacian}, in addition to the aforementioned choice $\delta=L\sup_{k\geq 0} \lambda_1^{\downarrow}(L_{\mathcal{G}^k})$, another option for $\delta$ could be $\delta\!=\!2\displaystyle{\sup_{k\ge0}\max_{i\in\mathcal{V}}}|\mathcal{N}_i^k|L_i$, so that $\delta\Lambda_L^{-1}-L_{\mathcal{G}^k}$ is diagonally dominant and thus positive semidefinite for each $k\geq 0$. Therefore, $\alpha^k$ can be selected in the interval $[\underline{\alpha},\bar{\alpha}]$ satisfying
\begin{align*}
0<\underline{\alpha}\le\bar{\alpha}<&\frac{1}{\min\{\frac{L}{2}\sup\limits_{k\geq 0} \lambda_1^{\downarrow}(L_{\mathcal{G}^k}),\sup\limits_{k\ge0}\max\limits_{i\in\mathcal{V}}|\mathcal{N}_i^k|L_i\}}.
\end{align*}
The above step-size condition can be simplified for some special interaction patterns. For instance, if the nodes interact in a gossiping pattern, i.e., each $\mathcal{E}^k$ contains only one link, then we may let $0<\underline{\alpha}\le\bar{\alpha}<1/L$.
Even though the topologies of $(\mathcal{G}^k)_{k=0}^\infty$ are completely unknown, since $\lambda_1^{\downarrow}(L_{\mathcal{G}^k})\le n$, we can adopt a more conservative step-size condition $0<\underline{\alpha}\le\bar{\alpha}<2/(nL)$. 
\end{example}

\begin{example}\label{ex:stepsizeMetropolis}
When $H_{\mathcal{G}^k}$ is set according to \eqref{eq:localweightmatrix}, we can simply take $\delta=2$, because $2\Lambda_L^{-1}-H_{\mathcal{G}^k}$ is diagonally dominant and thus $2\Lambda_L^{-1}\succeq H_{\mathcal{G}^k}$. Hence, the step-sizes can be selected as
\begin{align*}
0<\underline{\alpha}\le\alpha^k\le\bar{\alpha}<1,\quad\forall k\ge0,
\end{align*}
which requires no global information and is independent of the network and the problem.
\end{example}

The underlying weighted gradient method \eqref{eq:weightgrad} in Algorithm~\ref{alg:weightgrad} can be viewed as a generalization of the distributed weighted gradient methods in \cite{XiaoL06b,Lakshmanan08}. By assuming the (directed) network to be time-invariant and connected, \cite{XiaoL06b} proposes a class of weighted gradient methods in the form of \eqref{eq:weightgrad} but with a constant weight matrix. It is also shown in \cite{XiaoL06b} that if the time-invariant network is further undirected, the constant weight matrix can be determined in a distributed fashion via \eqref{eq:Laplacian} or \eqref{eq:localweightmatrix}. The step-size conditions in \cite{XiaoL06b} for fixed undirected networks and fixed weight matrices given by \eqref{eq:Laplacian} and \eqref{eq:localweightmatrix} are extended here in Examples~\ref{ex:stepsizeLaplacian} and~\ref{ex:stepsizeMetropolis} to handle time-varying networks and time-varying weight matrices. On the other hand, \cite{Lakshmanan08} considers time-varying undirected networks satisfying Assumption~\ref{asm:Bconnected}. By setting $H_{\mathcal{G}^k}$ to $L_{\mathcal{G}^k}$ in \eqref{eq:Laplacian} and $\alpha^k=1/(2nL)$ $\forall k\ge0$, \eqref{eq:weightgrad} reduces to the algorithm in \cite{Lakshmanan08}. Note from Example~\ref{ex:stepsizeLaplacian} that here we allow for a much broader step-size range for this particular weight matrix.
%
%
\section{Convergence Analysis}\label{sec:convanal}

This section is dedicated to analyzing the convergence performance of Algorithm~\ref{alg:weightgrad}.

%
%
\subsection{Asymptotic convergence under infinite connectivity}\label{ssec:InfiniteGraphs}

In this subsection, we show that Algorithm~\ref{alg:weightgrad} asymptotically converges to the optimum of problem~\eqref{eq:problem} under Assumption~\ref{asm:infiniteconnect}. 

We first show that \xw{the step-size condition~\eqref{eq:stepsize} ensures $(D(\mathbf{w}^k))_{k=0}^\infty$ to be non-increasing.}

\begin{lemma}\label{lemma:funcvaldescent}
Suppose Assumption \ref{asm:problem} holds. Let $(\mathbf{w}^k)_{k=0}^{\infty}$ be the dual iterates generated by Algorithm~\ref{alg:weightgrad}. If the step-sizes $(\alpha^k)_{k=0}^{\infty}$ satisfy \eqref{eq:stepsize}, then for each $k\ge0$,
\begin{align*}
D(\mathbf{w}^{k+1})-D(\mathbf{w}^k)\leq-\rho\nabla D(\mathbf{w}^k)^T(H_{\mathcal{G}^k}\otimes I_d)\nabla D(\mathbf{w}^k),
\end{align*}
where $\rho:=\min\{\underline{\alpha}-\frac{\underline{\alpha}^2\delta}{2}, \bar{\alpha}-\frac{\bar{\alpha}^2\delta}{2}\}\in(0,\infty)$, with $\underline{\alpha},\bar{\alpha}>0$ in \eqref{eq:stepsize} and $\delta>0$ in \eqref{eq:delta}.
\end{lemma}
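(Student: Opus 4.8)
The plan is to derive a weighted (blockwise) descent inequality from the componentwise Lipschitz continuity of $\nabla D$, feed in the single gradient step, and then exploit the spectral bound \eqref{eq:delta} to reduce everything to a scalar optimization over the step-size interval. First I would use that each $\nabla d_i$ is Lipschitz with constant $L_i$ (Proposition~\ref{pro:Lipschitz}): the standard quadratic upper bound $d_i(v_i)\le d_i(u_i)+\nabla d_i(u_i)^T(v_i-u_i)+\frac{L_i}{2}\|v_i-u_i\|^2$, summed over $i\in\mathcal{V}$, yields the weighted descent inequality $D(\mathbf{v})\le D(\mathbf{u})+\nabla D(\mathbf{u})^T(\mathbf{v}-\mathbf{u})+\frac12(\mathbf{v}-\mathbf{u})^T(\Lambda_L\otimes I_d)(\mathbf{v}-\mathbf{u})$ for all $\mathbf{u},\mathbf{v}\in\mathbb{R}^{nd}$, where $\Lambda_L\otimes I_d$ encodes the block Lipschitz constants.

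Writing $H:=H_{\mathcal{G}^k}\otimes I_d$ and $g:=\nabla D(\mathbf{w}^k)$, the update \eqref{eq:weightgrad} reads $\mathbf{w}^{k+1}-\mathbf{w}^k=-\alpha^k Hg$. Substituting $\mathbf{u}=\mathbf{w}^k$ and $\mathbf{v}=\mathbf{w}^{k+1}$ into the weighted descent inequality gives $D(\mathbf{w}^{k+1})-D(\mathbf{w}^k)\le-\alpha^k\,g^THg+\frac{(\alpha^k)^2}{2}\,g^TH(\Lambda_L\otimes I_d)Hg$, so everything now hinges on comparing the second quadratic form with the first.

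The crux --- and the step I expect to be the main obstacle --- is the inequality $g^TH(\Lambda_L\otimes I_d)Hg\le\delta\,g^THg$. I would prove it by a congruence argument: setting $\Lambda:=\Lambda_L\otimes I_d\succ0$ and $\tilde H:=\Lambda^{1/2}H\Lambda^{1/2}$, the bound \eqref{eq:delta}, namely $H\preceq\delta\Lambda^{-1}$, becomes $0\preceq\tilde H\preceq\delta I$ after pre- and post-multiplying by $\Lambda^{1/2}$. For a symmetric matrix with spectrum in $[0,\delta]$ one has $\tilde H^2\preceq\delta\tilde H$ (eigenvalue by eigenvalue, since $\lambda^2\le\delta\lambda$ for $\lambda\in[0,\delta]$); undoing the congruence with $\tilde g:=\Lambda^{-1/2}g$ then gives exactly $g^TH\Lambda Hg=\tilde g^T\tilde H^2\tilde g\le\delta\,\tilde g^T\tilde H\tilde g=\delta\,g^THg$.

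With this in hand, the descent bound collapses to $D(\mathbf{w}^{k+1})-D(\mathbf{w}^k)\le-\bigl(\alpha^k-\frac{(\alpha^k)^2\delta}{2}\bigr)g^THg$, and it remains only to bound the scalar coefficient. The map $\phi(\alpha)=\alpha-\frac{\alpha^2\delta}{2}$ is concave and satisfies $\phi(\alpha)>0$ on $(0,2/\delta)$, so over $[\underline{\alpha},\bar{\alpha}]\subset(0,2/\delta)$ its minimum is attained at an endpoint, i.e. $\min_{\alpha\in[\underline{\alpha},\bar{\alpha}]}\phi(\alpha)=\rho\in(0,\infty)$. Since $g^THg\ge0$ (because $H_{\mathcal{G}^k}\succeq0$) and $\alpha^k\in[\underline{\alpha},\bar{\alpha}]$, I conclude $D(\mathbf{w}^{k+1})-D(\mathbf{w}^k)\le-\rho\,g^THg$, which is the claim; the positivity $\rho>0$ is precisely what the upper step-size restriction $\bar{\alpha}<2/\delta$ guarantees.
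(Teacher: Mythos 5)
Your proof is correct and follows essentially the same route as the paper's: the block-weighted descent lemma built from the componentwise Lipschitz constants $\Lambda_L\otimes I_d$, the reduction of the quadratic term via \eqref{eq:delta} to $g^TH(\Lambda_L\otimes I_d)Hg\le\delta\,g^THg$, and the endpoint minimization of the concave map $\alpha\mapsto\alpha-\frac{\alpha^2\delta}{2}$ over $[\underline{\alpha},\bar{\alpha}]$. The only difference is in how the matrix inequality is verified: you use a $\Lambda^{1/2}$-congruence together with the spectral bound $\tilde H^2\preceq\delta\tilde H$, while the paper derives it from a Schur-complement lemma (if $O_n\preceq M\preceq\bar M$ then $M-M\bar M^{\dag}M\succeq O_n$) stated in the generality of pseudoinverses because it is reused later in the proof of Lemma~\ref{lemma:finallemma}; the two arguments are equivalent in the present setting, where $\bar M=\delta\Lambda_L^{-1}$ is invertible.
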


\begin{proof}
See Appendix~\ref{ssec:proofoflemma:funcvaldescent}.
\end{proof}

Lemma~\ref{lemma:funcvaldescent}, along with Propositions~\ref{pro:boundedlevelset} and~\ref{pro:dualfeasible}, implies that for each $k\ge0$, $\mathbf{w}^k\in S_0(\mathbf{w}^0)$ and $\|\mathbf{w}^k-\mathbf{w}^\star\|\le M_0$, where $\mathbf{w}^\star$ is any optimum of problem~\eqref{eq:dualprob} and
\begin{align}
M_0:=\!\!\!\!\!\!\max_{\mathbf{w}\in S_0(\mathbf{w}^0),\;\mathbf{w}^\star\in S^\bot:D(\mathbf{w}^\star)=D^\star}\|\mathbf{w}-\mathbf{w}^\star\|\in[0,\infty).\label{eq:M0}
\end{align}
Another important consequence of Lemma~\ref{lemma:funcvaldescent} is that the differences of the primal iterates along the time-varying links are vanishing. To see this, by adding the inequality in Lemma~\ref{lemma:funcvaldescent} from $k=0$ to $\infty$,
\begin{align*}
\sum_{k=0}^{\infty}\!\langle\mathbf{x}^k,(H_{\mathcal{G}^k}\!\otimes\!I_d)\mathbf{x}^k\rangle&\!=\!\sum_{k=0}^{\infty}\!\langle\nabla D(\mathbf{w}^k),(H_{\mathcal{G}^k}\!\otimes\!I_d)\nabla D(\mathbf{w}^k)\rangle\displaybreak[0]\\
&\leq(D(\mathbf{w}^0)-D^{\star})/\rho<\infty,
\end{align*}
where $\mathbf{x}^k=((x_1^k)^T,\ldots,(x_n^k)^T)^T$. This implies that $\langle\mathbf{x}^k,(H_{\mathcal{G}^k}\otimes I_d)\mathbf{x}^k\rangle\rightarrow 0$ as $k\rightarrow \infty$. Since
$\langle\mathbf{x}^k, (H_{\mathcal{G}^k}\otimes I_d)\mathbf{x}^k\rangle= \sum_{\{i,j\}\in \mathcal{E}^k} h_{ij}^k\|x_i^k-x_j^k\|^2$ and $h_{ij}^k\ge\underline{h}>0$ $\forall\{i,j\}\in\mathcal{E}^k$, we have
\begin{align}
\lim_{k\rightarrow\infty}\max_{\{i,j\}\in\mathcal{E}^k}\|x_i^k-x_j^k\|=0.\label{eq:limmaxxx=0}
\end{align}
Because $\mathcal{G}^k$ may not be connected at each $k\ge0$, \eqref{eq:limmaxxx=0} alone is insufficient to assert that the primal iterates $x_i^k$ $\forall i\in\mathcal{V}$ asymptotically reach a consensus. Nevertheless, by integrating \eqref{eq:limmaxxx=0} with Assumption~\ref{asm:infiniteconnect}, we are able to show in Lemma~\ref{lemma:convergesgradient} below that such an assertion is indeed true. The main idea of proving this can be summarized as follows: By \eqref{eq:limmaxxx=0} we know that $\|x_i^k-x_j^k\|$ $\forall\{i,j\}\in \mathcal{E}^k$ can be arbitrarily small after some time $T\ge0$. Then, instead of studying the differences $\|x_i^k-x_j^k\|$ $\forall i,j\in\mathcal{V}$ across the entire network, we show that such differences within each connected component of the graph $(\mathcal{V},\cup_{t=T}^k\mathcal{E}^t)$ become sufficiently small after some $k\ge T$. Finally, note from Assumption~\ref{asm:infiniteconnect} that the graph $(\mathcal{V},\cup_{t=T}^k\mathcal{E}^t)$ must be connected when $k\ge T$ is sufficiently large. The dissipation of the differences among all the $x_i^k$'s can thus be concluded.

\begin{lemma}\label{lemma:convergesgradient}
Suppose Assumptions~\ref{asm:problem} and \ref{asm:infiniteconnect} hold. Let $(\mathbf{x}^k)_{k=0}^{\infty}$ be the primal iterates generated by Algorithm~\ref{alg:weightgrad}. If the step-sizes $(\alpha^k)_{k=0}^{\infty}$ satisfy \eqref{eq:stepsize}, then $\lim\limits_{k\rightarrow\infty}\max\limits_{i,j\in\mathcal{V}}\|x_i^k-x_j^k\|=0$.
\end{lemma}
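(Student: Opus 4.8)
The plan is to combine the vanishing of primal differences along active links, established in \eqref{eq:limmaxxx=0}, with the infinite connectivity of Assumption~\ref{asm:infiniteconnect}, following the roadmap sketched just above the statement. First I would quantify how much a single primal iterate can move in one step. By \eqref{eq:dual_gradient} we have $x_i^k=\nabla d_i(w_i^k)$, and the dual update reads $w_i^{k+1}-w_i^k=-\alpha^k\sum_{j\in\mathcal N_i^k}h_{ij}^k(x_i^k-x_j^k)$, which is $\mathbf 0_d$ whenever $\mathcal N_i^k=\emptyset$. Using the Lipschitz continuity of $\nabla d_i$ (Proposition~\ref{pro:Lipschitz}) together with the uniform bounds $\alpha^k\le\bar\alpha$, $h_{ij}^k\le\bar h$ and $|\mathcal N_i^k|\le n-1$, this yields
\[
\|x_i^{k+1}-x_i^k\|\le L_i\bar\alpha\bar h\,(n-1)\max_{\{i,j\}\in\mathcal E^k}\|x_i^k-x_j^k\|,\qquad\forall i\in\mathcal V.
\]
Hence, by \eqref{eq:limmaxxx=0}, the per-step primal drift $\max_i\|x_i^{k+1}-x_i^k\|$ tends to $0$, and, crucially, a node does not move at all while it is isolated. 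This ``move only while communicating'' property is what converts link-wise closeness into network-wide closeness.

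Next, fix $\epsilon>0$ and, using \eqref{eq:limmaxxx=0} and the drift bound above, choose $T$ so large that for all $k\ge T$ both $\max_{\{i,j\}\in\mathcal E^k}\|x_i^k-x_j^k\|$ and $\max_i\|x_i^{k+1}-x_i^k\|$ fall below a small threshold $\eta=\eta(\epsilon,n)$. Since Assumption~\ref{asm:infiniteconnect} is equivalent to connectivity of $(\mathcal V,\cup_{t=T}^\infty\mathcal E^t)$, and a spanning tree of that graph uses only finitely many links, there is a finite $K\ge T$ for which $(\mathcal V,\cup_{t=T}^K\mathcal E^t)$ is already connected. A useful structural observation is that, for each $k\ge T$, every connected component $\mathcal C$ of $(\mathcal V,\cup_{t=T}^k\mathcal E^t)$ is a \emph{closed} subsystem over $[T,k]$: any link active at a time $s\in[T,k]$ joins two nodes lying in the same component, so no node ever interacts outside its component during the window. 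I would then bound the spread inside a component at the final time $k$ by telescoping along a spanning tree of the component: for nodes $i,j$ joined by a tree path whose $l$-th link fires at some $\tau_l\in[T,k]$, I would write each $x_{v_{l-1}}^k-x_{v_l}^k$ as the active-link difference at $\tau_l$ (of size $\le\eta$) plus two drift terms $x_v^k-x_v^{\tau_l}$, and sum over the $\le n-1$ links. Taking $k\ge K$ so the whole graph is a single component then gives $\max_{i,j}\|x_i^k-x_j^k\|\le\epsilon$.

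The main obstacle will be the drift terms $x_v^k-x_v^{\tau_l}$. Under mere infinite connectivity the window $[\tau_l,k]$ is genuinely unbounded, since Assumption~\ref{asm:infiniteconnect} places no cap on the gap between successive appearances of a link; and although Lemma~\ref{lemma:funcvaldescent} implies the per-step drifts are square-summable, they need not be summable, so one cannot merely add up $\sum_{s=\tau_l}^{k-1}\|x_v^{s+1}-x_v^s\|$ over a growing window. The resolution I would pursue exploits the closed-subsystem structure together with the ``move only while communicating'' property: within a component whose cumulative link set is already connected, I would argue by induction on $k$ that the component's spread cannot accumulate beyond $O(n\eta)$ even over long windows, because whenever a node moves it moves \emph{toward} its (nearby) active neighbors. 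Concretely, one can try to show that for every direction $c$ the quantity $\max_{i\in\mathcal C}\langle c,x_i^k\rangle$ is non-increasing up to an $O(\eta)$ error, using the monotonicity (cocoercivity) of each $\nabla d_i$. This would turn the link-wise bound $\eta$ into a network-wide bound of order $n\eta<\epsilon$, completing the argument.
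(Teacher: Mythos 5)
Your scaffolding matches the paper's proof: the per-step drift bound $\|x_i^{k+1}-x_i^k\|\le L_i\bar\alpha\bar h\, n\,\max_{\{i,j\}\in\mathcal E^k}\|x_i^k-x_j^k\|$, the observation that connected components of the cumulative graph $(\mathcal V,\cup_{t=T}^k\mathcal E^t)$ are closed subsystems, the telescoping along paths inside a component, and the finite number of component merges all appear in the paper's argument, and you correctly isolate the real difficulty (unbounded windows, drifts square-summable but not summable). However, your proposed resolution has a genuine gap. Cocoercivity of $\nabla d_i$ gives only $\langle x_i^{k+1}-x_i^k,\,w_i^{k+1}-w_i^k\rangle\ge\frac{1}{L_i}\|x_i^{k+1}-x_i^k\|^2$, i.e., it constrains the component of the primal displacement \emph{along} the dual step $w_i^{k+1}-w_i^k=-\alpha^k\sum_{j\in\mathcal N_i^k}h_{ij}^k(x_i^k-x_j^k)$; it says nothing about the orthogonal component, which can be of order $L\bar\alpha\bar h n\eta$ per step (here $\eta$ is your threshold, not the constant of Lemma~\ref{lemma:finallemma}) and can point in any direction $c$. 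Concretely, in $d=2$ with $c=(0,1)^T$, a directional maximizer $x_i^k=(0,0)^T$, and one active neighbor $x_j^k=(\eta,0)^T$, a displacement proportional to $(\eta/2,\eta/2)^T$ is fully consistent with monotonicity and raises $\max_{i}\langle c,x_i^k\rangle$ by $\Theta(\eta)$. So your quantity is non-increasing only up to $\Theta(\eta)$ \emph{per step}; over a window of length $W$ this accumulates to $\Theta(W\eta)$, and under Assumption~\ref{asm:infiniteconnect} the gaps between link activations, hence $W$, are unbounded and cannot be controlled by choosing $\eta$ in advance. Indeed, the linear $O(n\eta)$ network-wide bound you aim for is stronger than anything the paper itself establishes, which suggests it is not obtainable this way.

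The missing ingredient is the paper's Lemma~\ref{lemma:boundedconsensuserror}, which replaces trajectory tracking by a dual-value argument that is \emph{window-independent}. Within a component $\mathcal I$ that is closed over a window, $\sum_{i\in\mathcal I}w_i^k$ is conserved and $\sum_{i\in\mathcal I}d_i(w_i^k)$ is non-increasing (the arguments of Proposition~\ref{pro:dualfeasible} and Lemma~\ref{lemma:funcvaldescent} restricted to $\mathcal I$). If the primal spread in $\mathcal I$ is at most $\epsilon'$ at the start of the window, convexity plus the conservation constraint shows $\sum_{i\in\mathcal I}d_i(w_i)$ is within $2M_0(|\mathcal I|-1)\epsilon'$ of the optimal value of the component subproblem~\eqref{problem:infiniteconnectsubproblem}, whose optimizers have all gradients equal; the smoothness inequality $d_i(u_i)-d_i(w_i')\ge\langle\nabla d_i(w_i'),u_i-w_i'\rangle+\frac{1}{2L}\|\nabla d_i(u_i)-\nabla d_i(w_i')\|^2$ then pins the primal spread at \emph{every later time} in the window to at most $4\sqrt{LM_0(|\mathcal I|-1)\epsilon'}$, no matter how long the window is. The price is a square-root degradation at each of the at most $n$ merges, which is exactly why the paper's final bound is $O(\epsilon^{1/2^n})$ rather than $O(n\epsilon)$ --- still vanishing as $\epsilon\to0$, which is all the lemma needs. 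To repair your proof you would need this (or an equivalent) window-independent stability estimate; the monotone-operator directional-maximum argument does not supply one.
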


\begin{proof}
See Appendix~\ref{ssec:proofoflemmaconvergesgradient}.
\end{proof}

Since $x_i^k\in X_i$ $\forall i\in\mathcal{V}$, $\mathbf{x}^k$ is feasible if and only if $\mathbf{x}^k\in S$. Thus, $\|P_{S^{\bot}}(\mathbf{x}^k)\|$ can be used to quantify the infeasibility of $\mathbf{x}^k$. Note that $\|P_{S^{\bot}}(\mathbf{x}^k)\|^2=\|\mathbf{x}^k-P_S(\mathbf{x}^k)\|^2=\sum_{i\in\mathcal{V}}\|x_i^k-\frac{1}{n}\sum_{j\in\mathcal{V}}x_j^k\|^2\le\frac{1}{n}\sum_{i\in\mathcal{V}}\sum_{j\in\mathcal{V}}\|x_i^k-x_j^k\|^2$. It follows from Lemma~\ref{lemma:convergesgradient} that $\|P_{S^{\bot}}(\mathbf{x}^k)\|^2\rightarrow 0$ as $k\rightarrow\infty$. \xw{This can further be utilized to establish the asymptotic convergence to both dual and primal optimality}, as is shown in the theorem below.

\begin{theorem}\label{thm:asymconv}
Suppose Assumptions~\ref{asm:problem} and~\ref{asm:infiniteconnect} hold. Let $(\mathbf{w}^k)_{k=0}^{\infty}$ and $(\mathbf{x}^k)_{k=0}^{\infty}$ be the dual and primal iterates generated by Algorithm~\ref{alg:weightgrad}, respectively. If the step-sizes $(\alpha^k)_{k=0}^{\infty}$ satisfy \eqref{eq:stepsize}, then $\lim_{k\rightarrow\infty}\|P_{S^{\bot}}(\mathbf{x}^k)\|=0$, $\lim_{k\rightarrow\infty}D(\mathbf{w}^k)=D^{\star}$, $\lim_{k\rightarrow\infty}F(\mathbf{x}^k)=F^\star$, and $\lim_{k\rightarrow\infty}\mathbf{x}^k= \mathbf{x}^{\star}$.
\end{theorem}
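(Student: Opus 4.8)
The plan is to handle the four assertions in the order feasibility, dual value, primal value, primal convergence, reusing the two descent/consensus lemmas throughout. The first assertion, $\lim_{k\to\infty}\|P_{S^{\bot}}(\mathbf{x}^k)\|=0$, has effectively already been obtained in the paragraph preceding the statement: Lemma~\ref{lemma:convergesgradient} together with $\|P_{S^{\bot}}(\mathbf{x}^k)\|^2\le\frac{1}{n}\sum_{i,j}\|x_i^k-x_j^k\|^2$ gives it directly, so I would merely restate it. For the dual value, I would first argue that $(D(\mathbf{w}^k))$ is non-increasing by Lemma~\ref{lemma:funcvaldescent} and bounded below by $D^{\star}=\min_{\mathbf{w}\in S^{\bot}}D(\mathbf{w})$, since each $\mathbf{w}^k\in S^{\bot}$ by Proposition~\ref{pro:dualfeasible}; hence the limit $D^{\infty}:=\lim_{k\to\infty}D(\mathbf{w}^k)$ exists with $D^{\infty}\ge D^{\star}$. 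To promote this to equality I would use that $(\mathbf{w}^k)\subseteq S_0(\mathbf{w}^0)$, which is compact by Proposition~\ref{pro:boundedlevelset}, and extract a subsequence $\mathbf{w}^{k_l}\to\bar{\mathbf{w}}$ with $\bar{\mathbf{w}}\in S^{\bot}$ (closedness) and $D(\bar{\mathbf{w}})=D^{\infty}$ (continuity of $D$).

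The crux is to show that this limit point $\bar{\mathbf{w}}$ is dual-optimal. Since $\mathbf{x}^k=\nabla D(\mathbf{w}^k)$ by \eqref{eq:dualgradfull}, the Lipschitz continuity of $\nabla D$ (Corollary~\ref{cor:Lipschitz}) yields $\mathbf{x}^{k_l}\to\nabla D(\bar{\mathbf{w}})$, while Lemma~\ref{lemma:convergesgradient} forces the blocks of $\mathbf{x}^{k_l}$ to coincide in the limit, i.e. $\nabla D(\bar{\mathbf{w}})\in S$. By the optimality characterization recalled just below \eqref{eq:dualprob} (a maximizer of \eqref{eq:dualprob} is exactly a feasible $\mathbf{w}$ with $\nabla D(\mathbf{w})\in S$), this makes $\bar{\mathbf{w}}$ optimal, so $D(\bar{\mathbf{w}})=D^{\star}$ and therefore $D^{\infty}=D^{\star}$.

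For the primal function value I would invoke the conjugacy identity. Because each $x_i^k=\tilde{x}_i(w_i^k)$ attains the supremum defining $d_i$ in \eqref{eq:primalsolution}, summing over $i$ gives $D(\mathbf{w}^k)=(\mathbf{w}^k)^T\mathbf{x}^k-F(\mathbf{x}^k)$, hence $F(\mathbf{x}^k)=(\mathbf{w}^k)^T\mathbf{x}^k-D(\mathbf{w}^k)$. Since $\mathbf{w}^k\in S^{\bot}$ and $P_S(\mathbf{x}^k)\in S$, the cross term collapses to $(\mathbf{w}^k)^T\mathbf{x}^k=(\mathbf{w}^k)^TP_{S^{\bot}}(\mathbf{x}^k)$, so by Cauchy--Schwarz and the uniform bound $\|\mathbf{w}^k\|\le\|\mathbf{w}^{\star}\|+M_0$ coming from \eqref{eq:M0} one gets $|(\mathbf{w}^k)^T\mathbf{x}^k|\le(\|\mathbf{w}^{\star}\|+M_0)\|P_{S^{\bot}}(\mathbf{x}^k)\|\to0$. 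Combining this with $D(\mathbf{w}^k)\to D^{\star}$ and strong duality $F^{\star}=-D^{\star}$ gives $F(\mathbf{x}^k)\to F^{\star}$.

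Finally, for $\mathbf{x}^k\to\mathbf{x}^{\star}$ I would use a subsequential-limit argument together with uniqueness. The sequence $\mathbf{x}^k=\nabla D(\mathbf{w}^k)$ is bounded (bounded $\mathbf{w}^k$, Lipschitz $\nabla D$), so any subsequence has a convergent sub-subsequence $\mathbf{x}^{k_m}\to\bar{\mathbf{x}}$; then $\|P_{S^{\bot}}(\mathbf{x}^k)\|\to0$ gives $\bar{\mathbf{x}}\in S$ and closedness of each $X_i$ with $x_i^k\in X_i$ gives $\bar{\mathbf{x}}\in X_1\times\cdots\times X_n$, so $\bar{\mathbf{x}}$ is feasible for \eqref{eq:equivalentproblem}, and continuity of $F$ with $F(\mathbf{x}^k)\to F^{\star}$ forces $F(\bar{\mathbf{x}})=F^{\star}$; by uniqueness of the optimum $\bar{\mathbf{x}}=\mathbf{x}^{\star}$, and since every subsequence has a sub-subsequence with the same limit, the whole sequence converges to $\mathbf{x}^{\star}$. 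The main obstacle is the dual-optimality step: the delicate move is converting the primal consensus of Lemma~\ref{lemma:convergesgradient} into the gradient condition $\nabla D(\bar{\mathbf{w}})\in S$, which requires passing to a limit along a subsequence and simultaneously exploiting compactness of the level set and continuity of $\nabla D$; once that link is established, the remaining three assertions follow by the conjugacy identity, boundedness of the dual iterates, and the routine compactness-plus-uniqueness argument.
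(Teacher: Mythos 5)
Your proof is correct, but for two of the four claims it takes a genuinely different route from the paper's. For $D(\mathbf{w}^k)\to D^\star$, the paper argues directly and quantitatively: since $\mathbf{w}^k-\mathbf{w}^\star\in S^\bot$, convexity of $D$ gives $D(\mathbf{w}^k)-D^\star\le\langle \mathbf{x}^k,\mathbf{w}^k-\mathbf{w}^\star\rangle=\langle P_{S^\bot}(\mathbf{x}^k),\mathbf{w}^k-\mathbf{w}^\star\rangle\le M_0\|P_{S^\bot}(\mathbf{x}^k)\|$, with no subsequences needed; and for $\mathbf{x}^k\to\mathbf{x}^\star$ it uses the smoothness inequality \eqref{eq:DD>=12Lxx} to get $\|\mathbf{x}^k-\mathbf{x}^\star\|^2\le 2L\bigl(D(\mathbf{w}^k)-D^\star\bigr)$. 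These explicit error bounds are precisely what is recycled later to obtain the rates in Theorems~\ref{theorem:dualconvrate} and~\ref{thm:primalconvrate}, which is what the paper's approach buys. Your soft-analytic route --- monotonicity of $D$ plus compactness of $S_0(\mathbf{w}^0)$, a subsequential limit $\bar{\mathbf{w}}$, consensus of the gradient blocks via Lemma~\ref{lemma:convergesgradient} so that $\nabla D(\bar{\mathbf{w}})\in S$, and then the optimality characterization cited below \eqref{eq:dualprob} --- is valid, and in one respect it is more careful than the paper: your conjugacy identity $F(\mathbf{x}^k)=\langle\mathbf{w}^k,P_{S^\bot}(\mathbf{x}^k)\rangle-D(\mathbf{w}^k)$ spells out the step the paper compresses into ``zero duality gap implies $F(\mathbf{x}^k)\to F^\star$'' (the same identity resurfaces in the paper only in the proof of Theorem~\ref{thm:primalconvrate}, see \eqref{eq:lemmafuncval}). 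Two minor caveats: your feasibility-of-the-limit step for $\bar{\mathbf{x}}$ invokes closedness of each $X_i$, which the paper assumes only implicitly (it is needed anyway for $\tilde{x}_i(w_i)$ in \eqref{eq:primalsolution} to exist), and your compactness-plus-uniqueness argument, while airtight for asymptotic convergence, yields no quantitative estimates, so unlike the paper's two inequalities it could not serve as the stepping stone to the convergence-rate results that follow.
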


\begin{proof}
See Appendix~\ref{ssec:proofofthm:asymconv}.
\end{proof}

%
%
\subsection{Convergence rates under $B$-connectivity}\label{ssec:dualBconnected}

In this subsection, we offer sublinear rates of convergence for Algorithm~\ref{alg:weightgrad} under Assumption~\ref{asm:Bconnected}.

\xw{Inspired from \cite{Lakshmanan08}, we first provide a bound on the accumulative drop in the value of $D$ over each time interval $[tB,(t+1)B-1]$, $t\in\{0,1,\ldots\}$, which depends only on the dual iterate at time $tB$ and the underlying interaction graph during these $B$ iterations. To this end, for each $k\ge0$, let $\tilde{\mathcal{G}}^{k}=(\mathcal{V},\tilde{\mathcal{E}}^{k})$ be \emph{any} spanning subgraph of $(\mathcal{V}, \bigcup_{t=k}^{k+B-1} \mathcal{E}^t)$, which, owing to Assumption~\ref{asm:Bconnected}, is chosen to be connected at $k\in\{0,B,2B,\ldots\}$. Also let $\varpi^k$ be the maximum degree of $\tilde{\mathcal{G}}^{k}$ and $\bar{\varpi}:=\sup_{t\in\{0,1,\ldots\}}\varpi^{tB}$. Clearly, $1\le\varpi^{tB}\le\bar{\varpi}\le n-1$ $\forall t\in\{0,1,\ldots\}$.} 

\begin{lemma}\label{lemma:finallemma}
Suppose Assumptions~\ref{asm:problem} and~\ref{asm:Bconnected} hold. Let $(\mathbf{w}^k)_{k=0}^{\infty}$ be the dual iterates generated by Algorithm~\ref{alg:weightgrad}. If the step-sizes $(\alpha^k)_{k=0}^{\infty}$ satisfy \eqref{eq:stepsize}, then for each $k\in\{0,B,2B,\ldots\}$,\xw{
\begin{align}
\sum_{t=k}^{k+B-1}\nabla D(\mathbf{w}^t)^T(H_{\mathcal{G}^t}\otimes I_d)\nabla D(\mathbf{w}^t)\ge\nabla D(\mathbf{w}^k)^T(L_{\tilde{\mathcal{G}}^k}\otimes I_d)\nabla D(\mathbf{w}^k)/\eta,\label{eq:finallemmaeq}
\end{align}
where $\eta:=3B\bar{\varpi}\bar{\alpha}^2\delta L+3/\underline{h}\in(0,\infty)$}, with $\bar{\alpha}>0$ in \eqref{eq:stepsize}, $\delta>0$ in \eqref{eq:delta}, $L>0$ in Corollary~\ref{cor:Lipschitz}, and $\underline{h}>0$ in \eqref{eq:weight}.
\end{lemma}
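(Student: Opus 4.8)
The plan is to work entirely with the primal iterates, since \eqref{eq:dualgradfull} gives $\nabla D(\mathbf{w}^t)=\mathbf{x}^t$, and to rewrite both sides of \eqref{eq:finallemmaeq} as sums over edges using the identity $\nabla D(\mathbf{w}^t)^T(H_{\mathcal{G}^t}\otimes I_d)\nabla D(\mathbf{w}^t)=\sum_{\{i,j\}\in\mathcal{E}^t}h_{ij}^t\|x_i^t-x_j^t\|^2$ recorded earlier, together with $\nabla D(\mathbf{w}^k)^T(L_{\tilde{\mathcal{G}}^k}\otimes I_d)\nabla D(\mathbf{w}^k)=\sum_{\{i,j\}\in\tilde{\mathcal{E}}^k}\|x_i^k-x_j^k\|^2$. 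Fix $k\in\{0,B,2B,\ldots\}$. By the definition of $\tilde{\mathcal{G}}^k$ as a spanning subgraph of $(\mathcal{V},\bigcup_{t=k}^{k+B-1}\mathcal{E}^t)$, every edge $\{i,j\}\in\tilde{\mathcal{E}}^k$ is active at some time $t_{ij}\in[k,k+B-1]$, i.e. $\{i,j\}\in\mathcal{E}^{t_{ij}}$. The core idea is to route the target difference through that active time via $x_i^k-x_j^k=(x_i^k-x_i^{t_{ij}})+(x_i^{t_{ij}}-x_j^{t_{ij}})+(x_j^{t_{ij}}-x_j^k)$ and apply $\|a+b+c\|^2\le 3(\|a\|^2+\|b\|^2+\|c\|^2)$; this produces the overall factor $3$ and splits the sum over $\tilde{\mathcal{E}}^k$ into a ``snapshot'' contribution $\sum_{\{i,j\}\in\tilde{\mathcal{E}}^k}\|x_i^{t_{ij}}-x_j^{t_{ij}}\|^2$ and a ``drift'' contribution built from $\|x_i^k-x_i^{t_{ij}}\|^2$.

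For the snapshot term I would use $h_{ij}^{t_{ij}}\ge\underline{h}$ from \eqref{eq:weight} to write $\|x_i^{t_{ij}}-x_j^{t_{ij}}\|^2\le\frac{1}{\underline{h}}h_{ij}^{t_{ij}}\|x_i^{t_{ij}}-x_j^{t_{ij}}\|^2$. Since each $\{i,j\}\in\tilde{\mathcal{E}}^k$ is a distinct edge assigned a single time $t_{ij}$, the pairs $(\{i,j\},t_{ij})$ are distinct and each summand $h_{ij}^{t_{ij}}\|x_i^{t_{ij}}-x_j^{t_{ij}}\|^2$ is one nonnegative term of the double edge-sum on the left-hand side of \eqref{eq:finallemmaeq}; hence $\sum_{\{i,j\}\in\tilde{\mathcal{E}}^k}\|x_i^{t_{ij}}-x_j^{t_{ij}}\|^2\le\frac{1}{\underline{h}}\sum_{t=k}^{k+B-1}\nabla D(\mathbf{w}^t)^T(H_{\mathcal{G}^t}\otimes I_d)\nabla D(\mathbf{w}^t)$, which after the factor $3$ yields the $3/\underline{h}$ part of $\eta$.

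The drift term is the technical heart and the main obstacle. For each node $i$ and $k\le t\le k+B-1$, I would telescope $x_i^k-x_i^{t}=\sum_{s=k}^{t-1}(x_i^s-x_i^{s+1})$ and apply Cauchy--Schwarz to get $\|x_i^k-x_i^{t}\|^2\le B\sum_{s=k}^{k+B-1}\|x_i^{s+1}-x_i^s\|^2$ (the factor $B$ from at most $B$ increments). Summing over the two endpoints of every edge and using that $\tilde{\mathcal{G}}^k$ has maximum degree at most $\bar{\varpi}$, so each node is an endpoint of at most $\bar{\varpi}$ edges, bounds the aggregated drift by $B\bar{\varpi}\sum_{s=k}^{k+B-1}\|\mathbf{x}^{s+1}-\mathbf{x}^s\|^2$. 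The crucial per-step estimate is $\|\mathbf{x}^{s+1}-\mathbf{x}^s\|^2\le\bar{\alpha}^2\delta L\,\nabla D(\mathbf{w}^s)^T(H_{\mathcal{G}^s}\otimes I_d)\nabla D(\mathbf{w}^s)$. To derive it, I would first apply Proposition~\ref{pro:Lipschitz} componentwise, $\|x_i^{s+1}-x_i^s\|\le L_i\|w_i^{s+1}-w_i^s\|$, to obtain $\|\mathbf{x}^{s+1}-\mathbf{x}^s\|^2\le(\mathbf{w}^{s+1}-\mathbf{w}^s)^T(\Lambda_L^2\otimes I_d)(\mathbf{w}^{s+1}-\mathbf{w}^s)$, then substitute the update \eqref{eq:weightgrad} to reach $(\alpha^s)^2(\mathbf{x}^s)^T((H_{\mathcal{G}^s}\Lambda_L^2H_{\mathcal{G}^s})\otimes I_d)\mathbf{x}^s$. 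The key matrix inequality is $H_{\mathcal{G}^s}\Lambda_L^2H_{\mathcal{G}^s}\preceq\delta L\,H_{\mathcal{G}^s}$, which I would prove in two stages: $\Lambda_L^2\preceq L\Lambda_L$ (since $L_i\le L$) gives $H_{\mathcal{G}^s}\Lambda_L^2H_{\mathcal{G}^s}\preceq L\,H_{\mathcal{G}^s}\Lambda_LH_{\mathcal{G}^s}$, and $H_{\mathcal{G}^s}\Lambda_LH_{\mathcal{G}^s}\preceq\delta H_{\mathcal{G}^s}$ follows from \eqref{eq:delta} by setting $A:=\Lambda_L^{1/2}H_{\mathcal{G}^s}\Lambda_L^{1/2}\preceq\delta I$, so that $A^2\preceq\delta A$, and changing variables via $\mathbf{u}=\Lambda_L^{-1/2}\mathbf{y}$ to identify $\mathbf{y}^TH_{\mathcal{G}^s}\Lambda_LH_{\mathcal{G}^s}\mathbf{y}=\mathbf{u}^TA^2\mathbf{u}\le\delta\,\mathbf{u}^TA\mathbf{u}=\delta\,\mathbf{y}^TH_{\mathcal{G}^s}\mathbf{y}$. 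Using $\alpha^s\le\bar{\alpha}$ then gives the per-step estimate, so the aggregated drift is bounded by $B\bar{\varpi}\bar{\alpha}^2\delta L\sum_{t=k}^{k+B-1}\nabla D(\mathbf{w}^t)^T(H_{\mathcal{G}^t}\otimes I_d)\nabla D(\mathbf{w}^t)$.

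Combining the snapshot and drift bounds through the factor $3$ yields $\sum_{\{i,j\}\in\tilde{\mathcal{E}}^k}\|x_i^k-x_j^k\|^2\le(3/\underline{h}+3B\bar{\varpi}\bar{\alpha}^2\delta L)\sum_{t=k}^{k+B-1}\nabla D(\mathbf{w}^t)^T(H_{\mathcal{G}^t}\otimes I_d)\nabla D(\mathbf{w}^t)=\eta\sum_{t=k}^{k+B-1}\nabla D(\mathbf{w}^t)^T(H_{\mathcal{G}^t}\otimes I_d)\nabla D(\mathbf{w}^t)$, which rearranges to \eqref{eq:finallemmaeq}. I expect the main difficulty to be the per-step matrix inequality $H_{\mathcal{G}^s}\Lambda_L^2H_{\mathcal{G}^s}\preceq\delta L\,H_{\mathcal{G}^s}$, since it is precisely what converts the crude Lipschitz factor $L_i^2$ into the clean constant $\delta L$ and relies on using \eqref{eq:delta} in the sandwiched form $H_{\mathcal{G}^s}\Lambda_LH_{\mathcal{G}^s}\preceq\delta H_{\mathcal{G}^s}$ rather than a loose spectral-norm bound; the bookkeeping that keeps the snapshot terms distinct, so that they do not overcount the left-hand side, is a secondary point that must nonetheless be handled carefully.
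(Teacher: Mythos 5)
Your proposal is correct and follows essentially the same route as the paper's proof: the same three-way split of $x_i^k-x_j^k$ at the activation time $t_{\{i,j\}}^k$ with the factor $3$, the same telescoping/Cauchy--Schwarz drift bound producing the factor $B$, the same maximum-degree bound $\bar{\varpi}$, the same snapshot bound via $h_{ij}^t\ge\underline{h}$, and the same matrix chain $H_{\mathcal{G}^s}\Lambda_L^2H_{\mathcal{G}^s}\preceq L\,H_{\mathcal{G}^s}\Lambda_LH_{\mathcal{G}^s}\preceq\delta L\,H_{\mathcal{G}^s}$. The only (valid) deviation is in a sub-step: you prove $H_{\mathcal{G}^s}\Lambda_LH_{\mathcal{G}^s}\preceq\delta H_{\mathcal{G}^s}$ directly through the congruence $A:=\Lambda_L^{1/2}H_{\mathcal{G}^s}\Lambda_L^{1/2}\preceq\delta I$ and $A^2\preceq\delta A$, whereas the paper derives it from its Lemma~\ref{lemma:psdmatrixbound} via a Schur-complement argument.
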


\begin{proof}
See Appendix~\ref{ssec:proofoflemma:finallemma}.
\end{proof}

\xw{When $H_{\mathcal{G}^k}=L_{\mathcal{G}^k}$ and $\alpha^k=1/(2nL)$, \cite[Lemma A.9]{Lakshmanan08} provides a similar bound to \eqref{eq:finallemmaeq} with $\eta$ replaced by $3B/2$ and $\tilde{\mathcal{G}}^k$ being a spanning tree. Lemma~\ref{lemma:finallemma} improves this bound since $\eta\le 3B/4+3$ for such a particular choice of $H_{\mathcal{G}^k}$ and $\alpha^k$, allows for more general selections of $H_{\mathcal{G}^k}$ and $\alpha^k$, and sheds light on how the network topologies come into play.}

\xw{Lemma~\ref{lemma:funcvaldescent} and Lemma~\ref{lemma:finallemma} together bound the decrease in the value of $D$ during every $B$ iterations, with which we are able to provide a rate for $D(\mathbf{w}^k)\rightarrow D^\star$.} Prior to doing that, we define a sequence $(\tilde{M}_k)_{k=0}^\infty$ as follows: Let $\tilde{M}_0\in\mathbb{R}$ be any positive constant and define
\begin{align}
\tilde{M}_k=\max_{t=0,\ldots,k-1}\min_{\mathbf{w}^\star\in S^\bot:D(\mathbf{w}^\star)=D^\star}\|\mathbf{w}^{tB}-\mathbf{w}^\star\|,\;\forall k\ge1.\label{eq:tildeMk}
\end{align}
Notice that $0\le\tilde{M}_k\le M_0<\infty$, where $M_0$ is given by \eqref{eq:M0}. 

\begin{theorem}\label{theorem:dualconvrate}
Suppose Assumptions~\ref{asm:problem} and~\ref{asm:Bconnected} hold. Let $(\mathbf{w}^k)_{k=0}^{\infty}$ be the dual iterates generated by Algorithm~\ref{alg:weightgrad}. If the step-sizes $(\alpha^k)_{k=0}^\infty$ satisfy \eqref{eq:stepsize}, then for each $k\ge0$,\xw{
\begin{align}
D(\mathbf{w}^k)-D^{\star} &\leq \frac{\eta \tilde{M}_{\lfloor k/B\rfloor}^2(D(\mathbf{w}^0)-D^\star)}{\eta \tilde{M}_{\lfloor k/B\rfloor}^2+\rho\underline{\lambda}(D(\mathbf{w}^0)-D^\star)\lfloor k/B\rfloor},\label{eq:theorem1result}
\end{align}}
where $\tilde{M}_{\lfloor k/B\rfloor}\in [0,M_0]$ is defined in \eqref{eq:tildeMk} with $M_0\ge0$ in \eqref{eq:M0}, $\underline{\lambda}:=\inf_{t\in\{0,1,\ldots\}}\lambda_{n-1}^{\downarrow}(L_{\tilde{\mathcal{G}}^{tB}})\in(0,\infty)$, and $\eta,\rho>0$ are given in Lemma~\ref{lemma:finallemma} and Lemma~\ref{lemma:funcvaldescent}, respectively.
\end{theorem}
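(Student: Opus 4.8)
The plan is to track the optimality gap only at the block boundaries $tB$, $t\in\{0,1,\ldots\}$, and then transfer the bound to an arbitrary $k$ by monotonicity. I write $g_t:=D(\mathbf{w}^{tB})-D^\star\ge 0$ and $K:=\lfloor k/B\rfloor$. Summing the descent inequality of Lemma~\ref{lemma:funcvaldescent} over one block $[tB,(t+1)B-1]$ and then invoking Lemma~\ref{lemma:finallemma} at $k=tB$, I obtain the per-block descent
\begin{align*}
g_{t+1}-g_t\le-\frac{\rho}{\eta}\,\nabla D(\mathbf{w}^{tB})^T(L_{\tilde{\mathcal{G}}^{tB}}\otimes I_d)\nabla D(\mathbf{w}^{tB}).
\end{align*}
Because $D$ is non-increasing along the whole sequence (again Lemma~\ref{lemma:funcvaldescent}), one has $D(\mathbf{w}^k)-D^\star\le g_K$, so it suffices to bound $g_K$.

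The crux is converting the right-hand side into an inequality in $g_t$ alone. Since $\tilde{\mathcal{G}}^{tB}$ is connected, $\operatorname{Null}(L_{\tilde{\mathcal{G}}^{tB}}\otimes I_d)=S$ and the smallest nonzero eigenvalue of $L_{\tilde{\mathcal{G}}^{tB}}$ is $\lambda_{n-1}^{\downarrow}(L_{\tilde{\mathcal{G}}^{tB}})\ge\underline{\lambda}>0$, which gives
\begin{align*}
\nabla D(\mathbf{w}^{tB})^T(L_{\tilde{\mathcal{G}}^{tB}}\otimes I_d)\nabla D(\mathbf{w}^{tB})\ge\underline{\lambda}\,\|P_{S^\bot}(\nabla D(\mathbf{w}^{tB}))\|^2.
\end{align*}
Next I apply an error-bound argument based on convexity: choosing $\mathbf{w}^\star_t\in S^\bot$ as a dual optimum closest to $\mathbf{w}^{tB}$, convexity of $D$ yields $g_t\le\nabla D(\mathbf{w}^{tB})^T(\mathbf{w}^{tB}-\mathbf{w}^\star_t)$; since $\mathbf{w}^{tB},\mathbf{w}^\star_t\in S^\bot$ by Proposition~\ref{pro:dualfeasible}, the $S$-component of $\nabla D(\mathbf{w}^{tB})$ drops out of the inner product, and Cauchy--Schwarz gives $g_t\le\|P_{S^\bot}(\nabla D(\mathbf{w}^{tB}))\|\cdot\|\mathbf{w}^{tB}-\mathbf{w}^\star_t\|$. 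For $t=0,\ldots,K-1$ the distance $\|\mathbf{w}^{tB}-\mathbf{w}^\star_t\|$ is exactly the inner minimizer in \eqref{eq:tildeMk}, hence at most $\tilde{M}_K$. Combining the three displays produces the scalar recursion
\begin{align*}
g_{t+1}\le g_t-\frac{\rho\underline{\lambda}}{\eta\tilde{M}_K^2}\,g_t^2,\quad t=0,\ldots,K-1.
\end{align*}

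Finally, I run the standard $O(1/k)$ argument on this recursion. Assuming $g_t>0$ (the case $g_t=0$ makes \eqref{eq:theorem1result} trivial by monotonicity), dividing the recursion by $g_tg_{t+1}$ and using $g_{t+1}\le g_t$ gives $1/g_{t+1}-1/g_t\ge\rho\underline{\lambda}/(\eta\tilde{M}_K^2)$; telescoping from $0$ to $K-1$ and solving for $g_K$ yields precisely \eqref{eq:theorem1result} with $K=\lfloor k/B\rfloor$, after which $D(\mathbf{w}^k)-D^\star\le g_K$ closes the argument. I expect the error-bound step to be the main obstacle: the gap must be tied to $\|P_{S^\bot}(\nabla D)\|$ rather than $\|\nabla D\|$ so that it meshes with the Laplacian eigenvalue estimate, and the distance to the optimal set must be controlled uniformly over the block index by the single constant $\tilde{M}_K$ defined in \eqref{eq:tildeMk}. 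It is precisely this uniformity that keeps the coefficient in the scalar recursion constant across $t=0,\ldots,K-1$ and lets the telescoping go through.
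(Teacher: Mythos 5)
Your proposal is correct and follows essentially the same route as the paper's proof: per-block descent from Lemmas~\ref{lemma:funcvaldescent} and~\ref{lemma:finallemma}, the spectral bound $\nabla D(\mathbf{w}^{tB})^T(L_{\tilde{\mathcal{G}}^{tB}}\otimes I_d)\nabla D(\mathbf{w}^{tB})\ge\underline{\lambda}\|P_{S^\bot}(\nabla D(\mathbf{w}^{tB}))\|^2$, the convexity error bound tying $g_t$ to $\|P_{S^\bot}(\nabla D(\mathbf{w}^{tB}))\|$ via the closest dual optimum, and then monotonicity to pass from $\lfloor k/B\rfloor$ blocks to arbitrary $k$. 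The only (immaterial) difference is that you bound the distances uniformly by $\tilde{M}_K$ before telescoping the recursion by hand, whereas the paper keeps the per-block distances inside the sum and invokes Lemma~6 of \cite[Sec.~2.2.1]{Polyak87}, which is exactly the recursion argument you wrote out.
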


\begin{proof}
See Appendix~\ref{ssec:proofoftheorem:dualconvrate}.
\end{proof}

Theorem~\ref{theorem:dualconvrate} says that Algorithm~\ref{alg:weightgrad}, or equivalently, the underlying weighted gradient method~\eqref{eq:weightgrad}, converges to the optimal value $D^\star$ of problem~\eqref{eq:dualprob} at an $O(1/k)$ rate. \xw{The derivation of this result requires each $d_i$ to be smooth and the dual optimal set to be compact. These two conditions on problem~\eqref{eq:dualprob} may not hold if Assumption~\ref{asm:problem} is not satisfied (cf. Section~\ref{ssec:Fenchel}). Note that without the compactness of the dual optimal set, \eqref{eq:theorem1result} still holds, but we cannot guarantee $(\mathbf{w}^k)_{k=0}^\infty$ and thus $\tilde{M}_{\lfloor k/B\rfloor}$ $\forall k\ge0$ to be bounded.

The distributed weighted gradient methods in \cite{XiaoL06b,Lakshmanan08} also require the above two conditions on problem~\eqref{eq:dualprob} to establish their convergence to $D^\star$.} By imposing an additional assumption that the Hessian matrices of $d_i$ $\forall i\in\mathcal{V}$ are positive definite, the methods in \cite{XiaoL06b} are proved to achieve linear convergence rates on fixed networks. In contrast, Theorems~\ref{thm:asymconv} and~\ref{theorem:dualconvrate} allow for time-varying networks and do not even require the existence of the Hessian matrices of $d_i$ $\forall i\in\mathcal{V}$. The algorithm in \cite{Lakshmanan08} is shown to asymptotically drive $D(\mathbf{w}^k)$ to $D^\star$ and satisfy $\min_{t=1,\ldots,k}\|P_{S^\bot}(\nabla D(\mathbf{w}^{tB}))\|^2\le C\cdot n^3B/k$ for some $C>0$. Our results in Theorems~\ref{thm:asymconv} and~\ref{theorem:dualconvrate} for the more general algorithm~\eqref{eq:weightgrad} are still stronger. We show that $\lim_{k\rightarrow\infty}D(\mathbf{w}^k)=D^\star$ under the less restrictive Assumption~\ref{asm:infiniteconnect}, and that $D(\mathbf{w}^k)$ converges to $D^\star$ at an $O(1/k)$ rate under Assumption~\ref{asm:Bconnected}. Also, since $\nabla D(\mathbf{w}^{k})=\mathbf{x}^{k}$, the first inequality in Theorem~\ref{thm:primalconvrate} below is comparable to and slightly stronger than the aforementioned convergence rate in \cite{Lakshmanan08}.

Based on Theorem~\ref{theorem:dualconvrate}, below we show that the primal errors $\|\mathbf{x}^k-\mathbf{x}^{\star}\|$ and $|F(\mathbf{x}^k)-F^{\star}|$ in optimality and $\|P_{S^\bot}(\mathbf{x}^k)\|$ in feasibility all converge to zero at rates of $O(1/\sqrt{k})$. \xw{Like many Lagrange dual gradient methods (e.g., \cite{Giselsson13,LuJ16}), we do so by relating such primal errors with the dual error $D(\mathbf{w}^k)-D^{\star}$.}

\begin{theorem}\label{thm:primalconvrate}
Suppose Assumptions~\ref{asm:problem} and~\ref{asm:Bconnected} hold. Let $(\mathbf{x}^k)_{k=0}^{\infty}$ be the primal iterates generated by Algorithm~\ref{alg:weightgrad}. If the step-sizes $(\alpha^k)_{k=0}^\infty$ satisfy \eqref{eq:stepsize}, then for each $k\ge0$,\xw{
\begin{align*}
&\|P_{S^\bot}(\mathbf{x}^k)\|\le\|\mathbf{x}^k-\mathbf{x}^{\star}\|\leq\sqrt{\frac{2L\eta \tilde{M}_{\lfloor k/B\rfloor}^2(D(\mathbf{w}^0)-D^{\star})}{\eta \tilde{M}_{\lfloor k/B\rfloor}^2\!\!+\!\!\rho\underline{\lambda}(D(\mathbf{w}^0)\!-\!D^{\star})\lfloor k/B\rfloor}},\displaybreak[0]\\
&F(\mathbf{x}^k)\!-\!F^{\star}\!\leq\! \|\mathbf{w}^k\|\!\sqrt{\frac{2L\eta \tilde{M}_{\lfloor k/B\rfloor}^2(D(\mathbf{w}^0)\!-\!D^{\star})}{\eta \tilde{M}_{\lfloor k/B\rfloor}^2\!+\!\rho\underline{\lambda}(D(\mathbf{w}^0)\!-\!D^{\star})\lfloor k/B\rfloor}},\displaybreak[0]\\
&F(\mathbf{x}^k)\!-\!F^{\star}\!\ge\!-\|\mathbf{w}^\star\|\!\sqrt{\frac{2L\eta \tilde{M}_{\lfloor k/B\rfloor}^2(D(\mathbf{w}^0)\!-\!D^{\star})}{\eta \tilde{M}_{\lfloor k/B\rfloor}^2\!+\!\rho\underline{\lambda}(D(\mathbf{w}^0)\!-\!D^{\star})\lfloor k/B\rfloor}},
\end{align*}}
where $\mathbf{w}^\star$ is any optimal solution of problem~\eqref{eq:dualprob}, $L$ is given in Corollary~\ref{cor:Lipschitz}, and the remaining constants have been introduced in Theorem \ref{theorem:dualconvrate}.
\end{theorem}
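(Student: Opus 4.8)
The plan is to relate each of the three primal errors to the dual suboptimality $D(\mathbf{w}^k)-D^\star$ and then substitute the $O(1/k)$ dual rate from Theorem~\ref{theorem:dualconvrate}. The key observation is that the common radicand in all three displayed bounds is exactly $2L$ times the right-hand side of \eqref{eq:theorem1result}, so it suffices to establish the three cleaner inequalities $\|P_{S^\bot}(\mathbf{x}^k)\|\le\|\mathbf{x}^k-\mathbf{x}^\star\|\le\sqrt{2L(D(\mathbf{w}^k)-D^\star)}$, $F(\mathbf{x}^k)-F^\star\le\|\mathbf{w}^k\|\,\|\mathbf{x}^k-\mathbf{x}^\star\|$, and $F(\mathbf{x}^k)-F^\star\ge-\|\mathbf{w}^\star\|\,\|\mathbf{x}^k-\mathbf{x}^\star\|$, and then invoke Theorem~\ref{theorem:dualconvrate}.

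The central step is bounding $\|\mathbf{x}^k-\mathbf{x}^\star\|$ by the dual gap. Since $\mathbf{x}^k=\nabla D(\mathbf{w}^k)$ and $\mathbf{x}^\star=\nabla D(\mathbf{w}^\star)$ by \eqref{eq:dualgradfull}, and $D$ is convex with $L$-Lipschitz gradient (Corollary~\ref{cor:Lipschitz}), a standard inequality for smooth convex functions gives $\frac{1}{2L}\|\nabla D(\mathbf{w}^k)-\nabla D(\mathbf{w}^\star)\|^2\le D(\mathbf{w}^k)-D(\mathbf{w}^\star)-\nabla D(\mathbf{w}^\star)^T(\mathbf{w}^k-\mathbf{w}^\star)$. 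The crucial point is that the linear term vanishes: by Proposition~\ref{pro:dualfeasible} both $\mathbf{w}^k,\mathbf{w}^\star\in S^\bot$, hence $\mathbf{w}^k-\mathbf{w}^\star\in S^\bot$, while dual optimality of $\mathbf{w}^\star$ gives $\nabla D(\mathbf{w}^\star)\in S$; since $S\perp S^\bot$ the inner product is zero. This yields $\|\mathbf{x}^k-\mathbf{x}^\star\|\le\sqrt{2L(D(\mathbf{w}^k)-D^\star)}$. The feasibility bound $\|P_{S^\bot}(\mathbf{x}^k)\|\le\|\mathbf{x}^k-\mathbf{x}^\star\|$ then follows because $\mathbf{x}^\star\in S$ implies $P_{S^\bot}(\mathbf{x}^k)=P_{S^\bot}(\mathbf{x}^k-\mathbf{x}^\star)$, and the projection onto a subspace is nonexpansive.

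For the primal-value bounds I would use Fenchel duality together with Cauchy--Schwarz. Since $\mathbf{x}^k=\tilde{\mathbf{x}}(\mathbf{w}^k)$ attains the supremum defining $D$, we have $D(\mathbf{w}^k)=(\mathbf{w}^k)^T\mathbf{x}^k-F(\mathbf{x}^k)$, and using $F^\star=-D^\star$ this rearranges to $F(\mathbf{x}^k)-F^\star=(\mathbf{w}^k)^T\mathbf{x}^k-(D(\mathbf{w}^k)-D^\star)$. Because $\mathbf{w}^k\in S^\bot$ and $\mathbf{x}^\star\in S$, $(\mathbf{w}^k)^T\mathbf{x}^\star=0$, so $(\mathbf{w}^k)^T\mathbf{x}^k=(\mathbf{w}^k)^T(\mathbf{x}^k-\mathbf{x}^\star)\le\|\mathbf{w}^k\|\,\|\mathbf{x}^k-\mathbf{x}^\star\|$; dropping the nonnegative term $D(\mathbf{w}^k)-D^\star$ gives the upper bound. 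For the lower bound, dual optimality means $\mathbf{x}^\star=\tilde{\mathbf{x}}(\mathbf{w}^\star)$ minimizes $F(\mathbf{x})-(\mathbf{w}^\star)^T\mathbf{x}$ over $X_1\times\cdots\times X_n$, so evaluating at $\mathbf{x}=\mathbf{x}^k$ yields $F^\star=F(\mathbf{x}^\star)\le F(\mathbf{x}^k)-(\mathbf{w}^\star)^T\mathbf{x}^k$, i.e. $F(\mathbf{x}^k)-F^\star\ge(\mathbf{w}^\star)^T(\mathbf{x}^k-\mathbf{x}^\star)\ge-\|\mathbf{w}^\star\|\,\|\mathbf{x}^k-\mathbf{x}^\star\|$, again using $(\mathbf{w}^\star)^T\mathbf{x}^\star=0$.

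Finally I would substitute $\|\mathbf{x}^k-\mathbf{x}^\star\|\le\sqrt{2L(D(\mathbf{w}^k)-D^\star)}$ into all three inequalities and apply the dual rate \eqref{eq:theorem1result} to obtain the stated expressions. I expect the only delicate point to be the vanishing of the linear term in the smoothness inequality, which hinges on the $S$/$S^\bot$ orthogonality and on $\nabla D(\mathbf{w}^\star)\in S$; the remaining steps are routine uses of weak duality and Cauchy--Schwarz. Together the feasibility bound and the two-sided value bounds certify that, although $\mathbf{x}^k$ need not be feasible, both its infeasibility and its objective deviation from $F^\star$ decay at the $O(1/\sqrt{k})$ rate inherited from the $O(1/k)$ dual gap.
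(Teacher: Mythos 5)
Your proof is correct and follows essentially the same route as the paper's: the bound $\|\mathbf{x}^k-\mathbf{x}^\star\|\le\sqrt{2L(D(\mathbf{w}^k)-D^\star)}$ via the smoothness inequality with the linear term annihilated by the $S$/$S^\bot$ orthogonality is exactly the paper's inequality \eqref{eq:DD>=12Lxx}, and your Fenchel/weak-duality arguments for the two-sided bounds on $F(\mathbf{x}^k)-F^\star$ match the paper's \eqref{eq:lemmafuncval} before substituting Theorem~\ref{theorem:dualconvrate}. The only cosmetic difference is that the paper bounds the objective gap through $\|P_{S^\bot}(\mathbf{x}^k)\|$ rather than $\|\mathbf{x}^k-\mathbf{x}^\star\|$, which leads to the identical final expressions since both are dominated by the same radicand.
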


\begin{proof}
See Appendix~\ref{ssec:proofofthm:primalconvrate}.
\end{proof}

Since $\mathbf{w}^k\in S_0(\mathbf{w}^0)$ $\forall k\ge0$ and $S_0(\mathbf{w}^0)$ is compact, the term $\|\mathbf{w}^k\|$ that appears in the convergence rate of $F(\mathbf{x}^k)-F^{\star}$ is uniformly bounded above by $M_0+\|\mathbf{w}^\star\|$. Consequently, the primal convergence rates of Algorithm~\ref{alg:weightgrad} in Theorem~\ref{thm:primalconvrate} are all of order $O(1/\sqrt{k})$, which commensurate with the convergence rate of the classic (centralized) subgradient projection method \cite{Nesterov04}. 

In the final part of this section, we compare the primal convergence rates of Algorithm~\ref{alg:weightgrad} with those of the existing distributed optimization algorithms that also have guaranteed \emph{convergence rates} over \emph{time-varying} networks, including Subgradient-Push \cite{Nedic15}, Gradient-Push \cite{Nedic16b}, DIGing \cite{Nedic17}, and Push-DIGing \cite{Nedic17}. Different from Algorithm~\ref{alg:weightgrad} that is developed by applying distributed weighted gradient methods to the Fenchel dual, Subgradient-Push and Gradient-Push are constructed by incorporating the subgradient method and the stochastic gradient descent method into the Push-Sum consensus protocol \cite{Kempe03}, DIGing is designed by combining a distributed inexact gradient method with a gradient tracking technique, and Push-DIGing is derived by introducing Push-Sum into DIGing.

The convergence rates of the aforementioned algorithms are all established under Assumption~\ref{asm:Bconnected}.\footnote{\xw{When it comes to Subgradient-Push, Gradient-Push, and Push-DIGing, ``connected'' in Assumption~\ref{asm:Bconnected} is indeed ``strongly connected'' since they consider directed networks.}} For each of these algorithms, Table~\ref{table:comparison} lists its assumptions and convergence rate. Observe that only Algorithm~\ref{alg:weightgrad} is capable of solving problems with different local constraints of the agents, while the remaining algorithms all require the problem to be unconstrained and their extensions to constrained problems are still open challenges. Also, Gradient-Push, DIGing, and Push-DIGing require both strong convexity and smoothness of the $f_i$'s, leading to faster convergence rates than the $O(1/\sqrt{k})$ rate of Algorithm~\ref{alg:weightgrad}. This is natural because we assume a weaker condition on $f_i$ $\forall i\in\mathcal{V}$, which allows the strongly convex $f_i$'s to be nonsmooth. Subgradient-Push needs neither strong convexity nor smoothness of each $f_i$, and the resulting convergence rate $O(\ln k/\sqrt{k})$ is slower than our $O(1/\sqrt{k})$ result. Note that the assumption on the $f_i$'s for Algorithm~\ref{alg:weightgrad} is not necessarily more restrictive than that for Subgradient-Push, since Subgradient-Push requires the subgradients of each $f_i$ to be uniformly bounded over $\mathbb{R}^d$ but Algorithm~\ref{alg:weightgrad} does not. Unlike Subgradient-Push, Gradient-Push, and Push-DIGing that admit directed links, DIGing and Algorithm~\ref{alg:weightgrad} are only applicable to undirected graphs. With that said, Algorithm~\ref{alg:weightgrad} is guaranteed to converge to the optimum with the minimal connectivity condition, i.e., Assumption~\ref{asm:infiniteconnect}, while the other methods have no such convergence results. 

\begin{table*}[tb]
  \centering
    \begin{tabular}{|c|c|c|c|c|c|c|}
      \hline
			Algorithm & unconstrained & strongly  & Lipschitz & bounded & undirected & convergence\\
	              & problem & convex  & gradient  & subgradient & links & rate\\
      \hline
      Subgradient-Push \cite{Nedic15} & $\surd$ &  &  & $\surd$ &  & $O(\ln k/\sqrt{k})$\\
      \hline
      Gradient-Push \cite{Nedic16b} & $\surd$ & $\surd$ & $\surd$ & &  & $O(\ln k/k)$\\
      \hline
      DIGing \cite{Nedic17} & $\surd$ & $\surd$ & $\surd$ &  & $\surd$ & $O(q^k)$, $0<q<1$\\
      \hline
      Push-DIGing \cite{Nedic17} & $\surd$ & $\surd$ & $\surd$ & & & $O(q^k)$, $0<q<1$\\
      \hline
      Algorithm~\ref{alg:weightgrad} & & $\surd$ & &  & $\surd$ & $O(1/\sqrt{k})$\\
      \hline
    \end{tabular}
		\vspace*{0.1in}
  \caption{{\upshape \xw{Comparison of Algorithm~\ref{alg:weightgrad} and related methods in assumptions and convergence rate. Here, $\surd$ means the assumption is required.}}} \label{table:comparison}
\end{table*}

\section{Numerical Examples}\label{sec:numericalexample}

In this section, we demonstrate the competent convergence performance of the proposed distributed Fenchel dual gradient methods by comparing them with a number of existing distributed optimization algorithms via simulations.

\subsection{Constrained case}\label{ssec:constrained}

\begin{figure*}[tb]
\centering
\subfigure[$n=50$, $B=10$, $2<\theta_i<3$]{
\includegraphics[width=0.3\linewidth, height=0.2\linewidth]{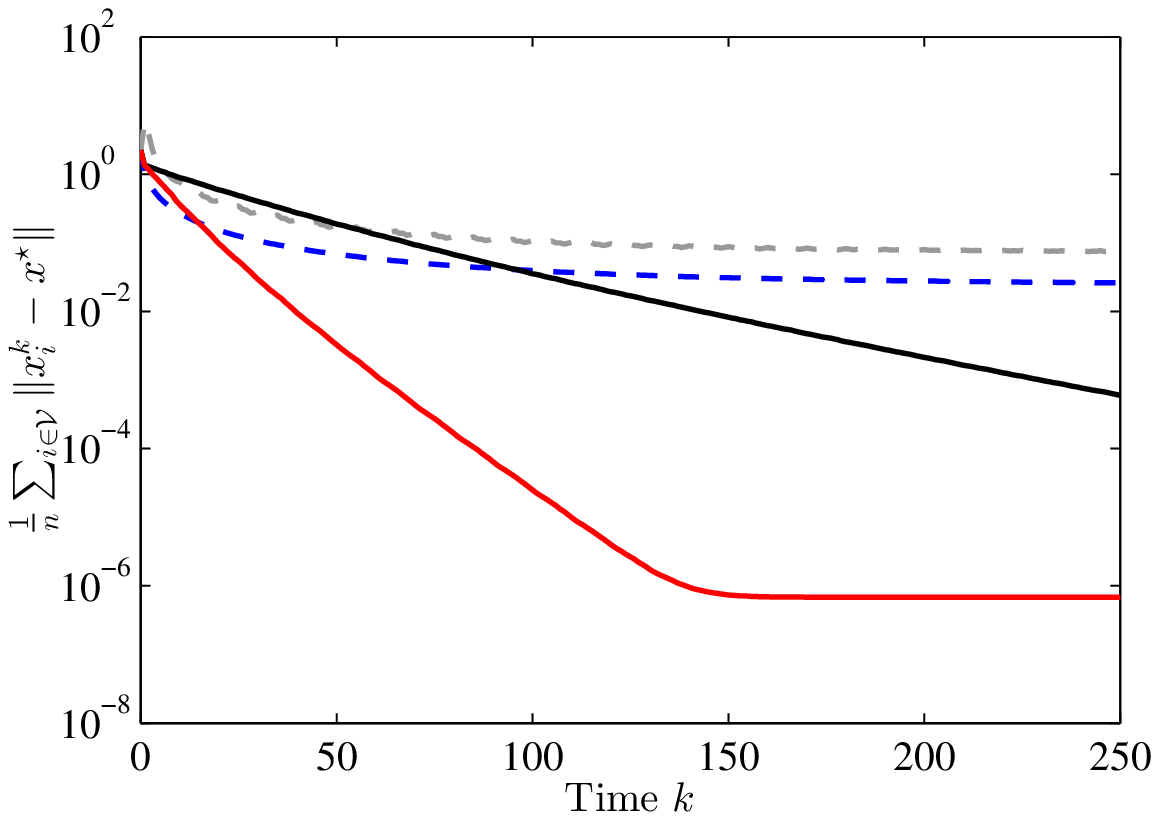}\label{fig:n50B10theta2}}
\hspace{0.01\linewidth}
\subfigure[$n=500$, $B=10$, $2<\theta_i<3$]{
\includegraphics[width=0.3\linewidth, height=0.2\linewidth]{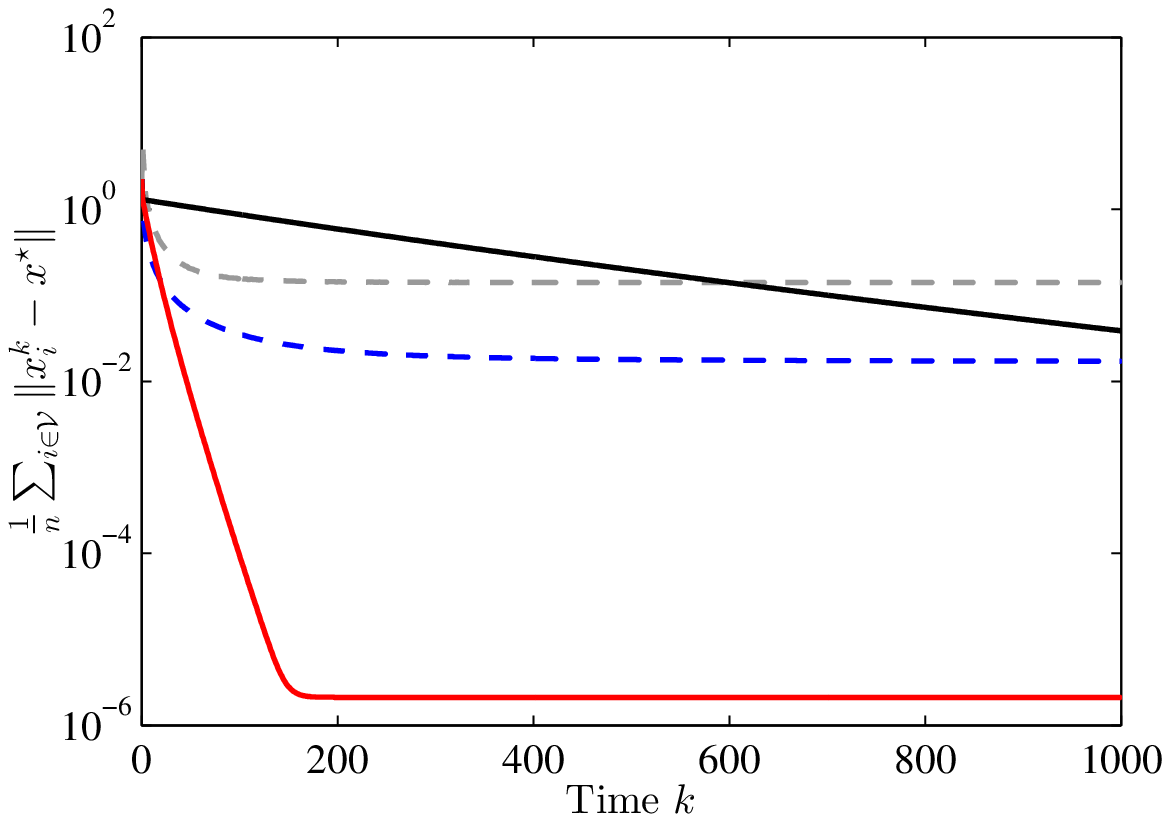}\label{fig:n500B10theta2}}
\subfigure[\xw{$n=50$, $B=10$, $0.2<\theta_i<0.4$}]{
\hspace{0.01\linewidth}
\includegraphics[width=0.3\linewidth, height=0.2\linewidth]{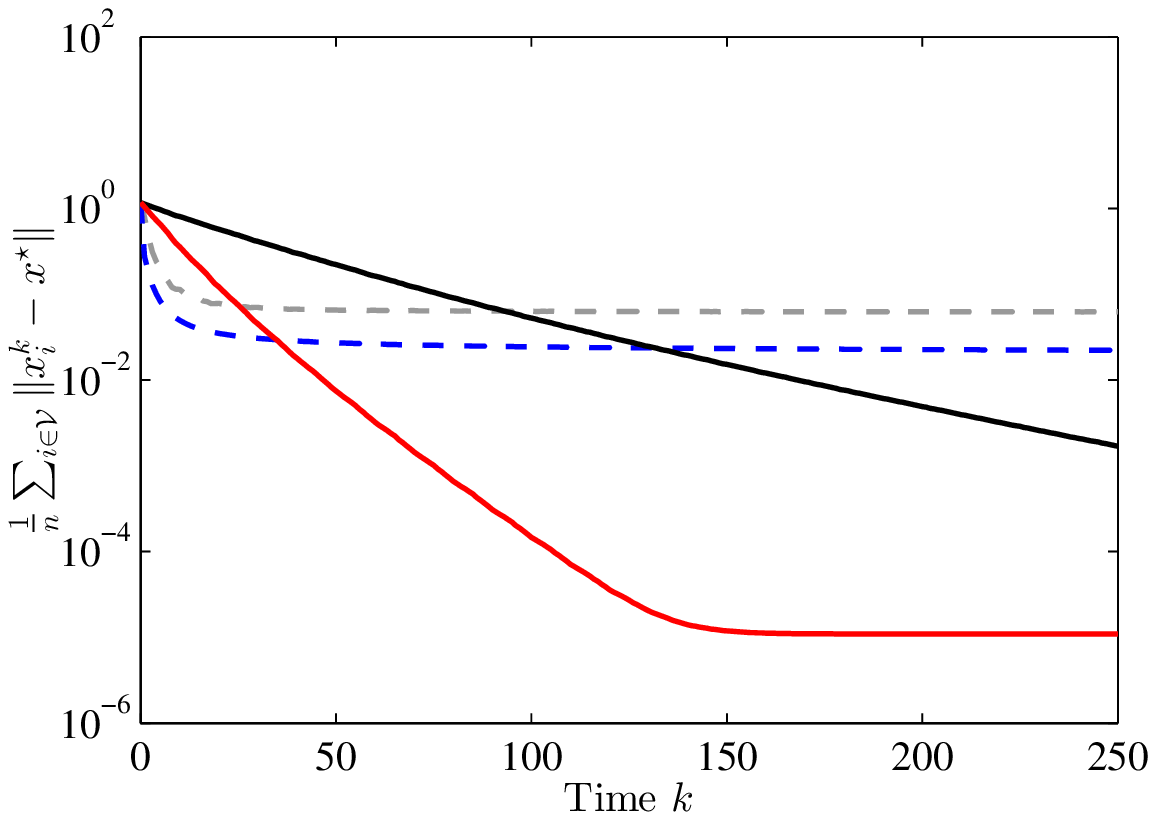}\label{fig:n50B10theta02}}
\vfill
\subfigure[$n=50$, $B=50$, $2<\theta_i<3$]{
\includegraphics[width=0.3\linewidth, height=0.2\linewidth]{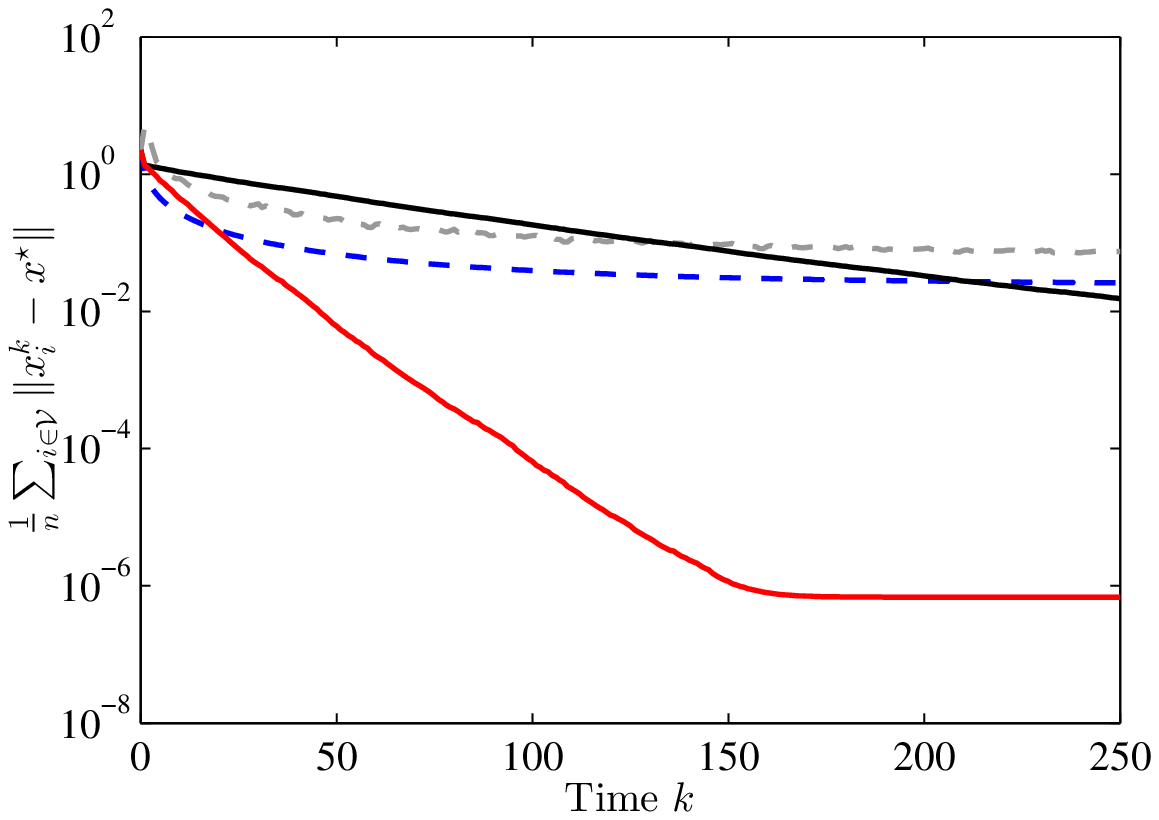}\label{fig:n50B50theta2}}
\hspace{0.01\linewidth}
\subfigure[$n=500$, $B=50$, $2<\theta_i<3$]{
\includegraphics[width=0.3\linewidth, height=0.2\linewidth]{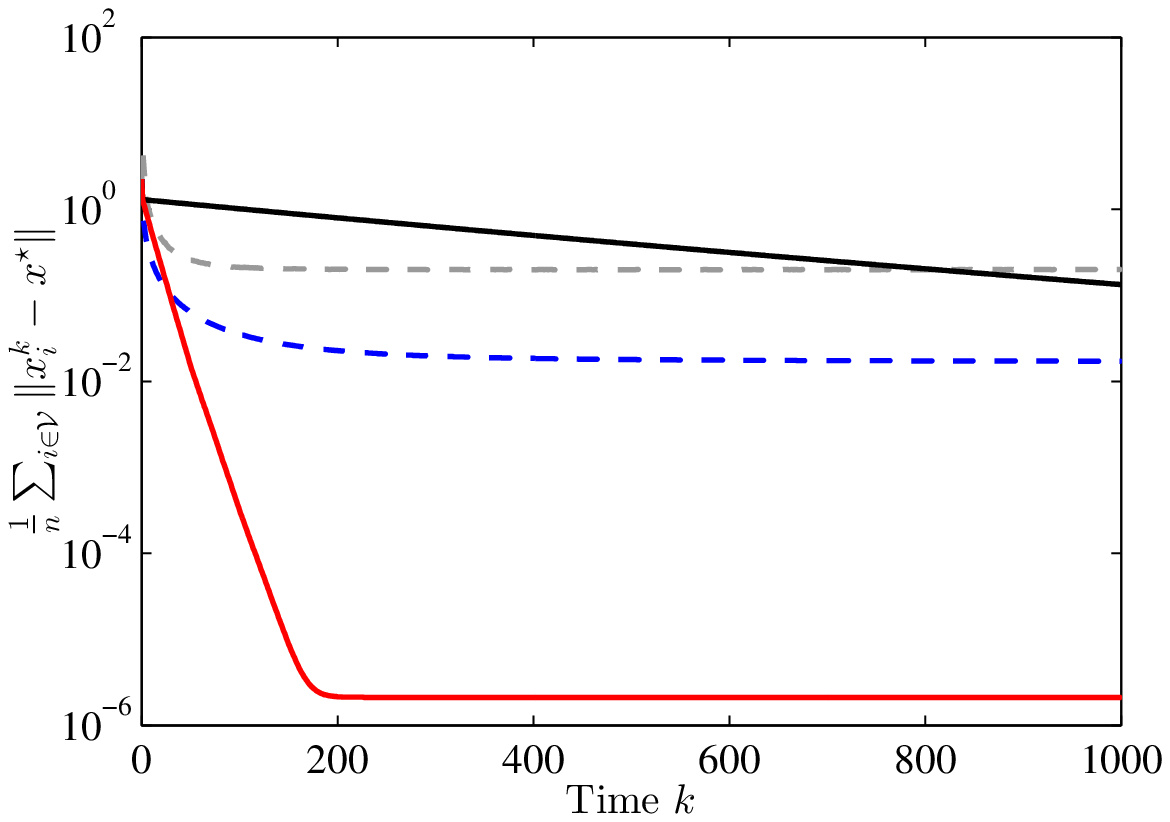}\label{fig:n500B50theta2}}
\hspace{0.01\linewidth}
\subfigure[\xw{$n=50$, $B=10$, $5<\theta_i<10$}]{
\includegraphics[width=0.3\linewidth, height=0.2\linewidth]{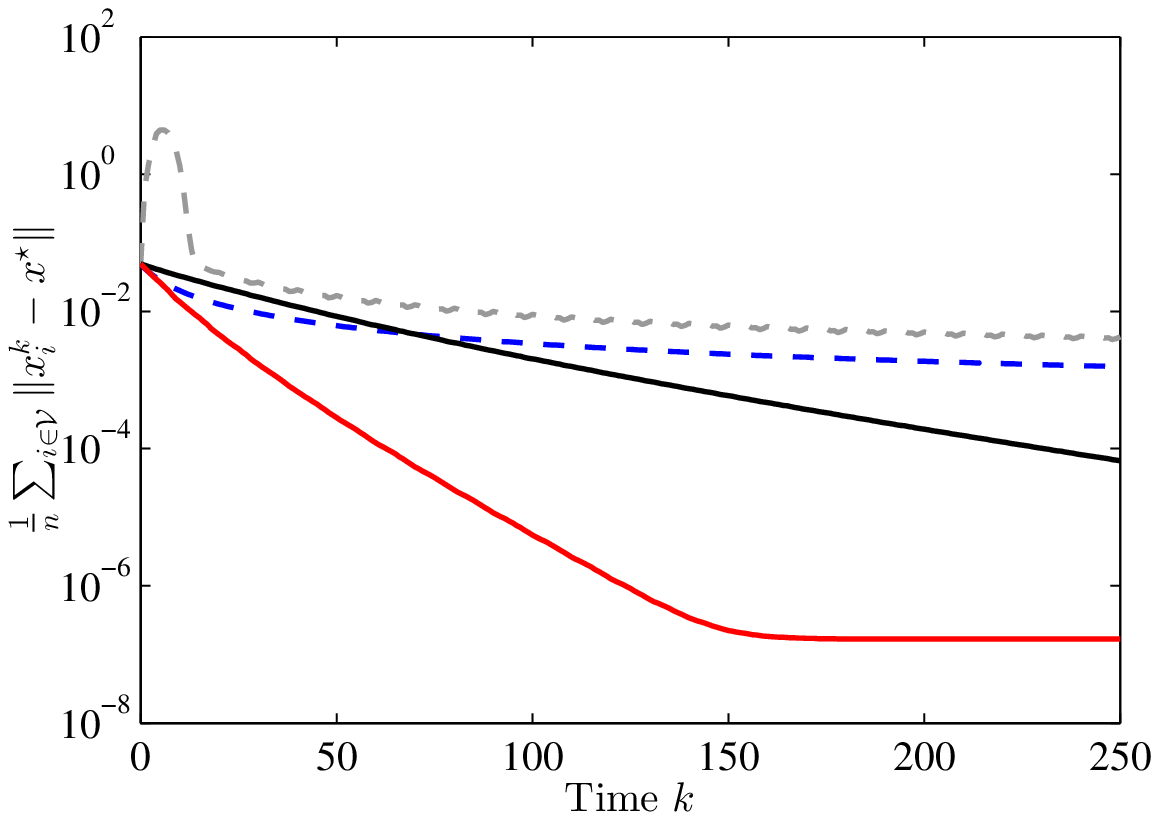}\label{fig:n50B10theta5}}
\caption{Primal errors in solving problem~\eqref{problem:testproblem} (The grey dashed, blue dashed, black solid, and red solid curves correspond to the consensus-based subgradient projection method, the proximal-minimization-based method, Algorithm~\ref{alg:weightgrad} with $H_{\mathcal{G}^k}$ in \eqref{eq:Laplacian}, and Algorithm~\ref{alg:weightgrad} with $H_{\mathcal{G}^k}$ in \eqref{eq:localweightmatrix}, respectively.).}
\label{fig:testfig}
\end{figure*}

We first compare the convergence performance of a consensus-based subgradient projection method \cite{Nedic10}, a proximal-minimization-based method \cite{Margellos16}, and Algorithm~\ref{alg:weightgrad} with $H_{\mathcal{G}^k}$ given by the graph Laplacian matrix \eqref{eq:Laplacian} and the Metropolis weight matrix \eqref{eq:localweightmatrix}, respectively, in solving constrained distributed optimization problems in the form of \eqref{eq:problem}. It has been proved that when each local constraint $X_i$ is compact, the consensus-based subgradient projection method and the proximal-minimization-based method, with diminishing step-sizes (e.g., $1/k$), asymptotically converge to an optimum over time-varying networks satisfying Assumption~\ref{asm:Bconnected} \cite{LinP16,Margellos16}. Thus, consider the following multi-agent $\ell_1$-regularization problem that often arises in machine learning:
\begin{align}
\begin{array}{ll}\underset{x\in\mathbb{R}^5}{\mbox{minimize}} & \sum_{i\in\mathcal{V}}(x^TA_ix+b_i^Tx+\frac{1}{n}\|x\|_1)\\ \operatorname{subject\,to} & x\in \bigcap_{i\in\mathcal{V}}\{x\in\mathbb{R}^5:p_i \leq x\leq q_i\},\end{array}\label{problem:testproblem}
\end{align}
where each $A_i\in\mathbb{R}^{5\times 5}$ is symmetric positive definite, $b_i\in\mathbb{R}^5$, and $p_i\leq x\leq q_i$ with $p_i,q_i\in\mathbb{R}^5$ means an elementwise inequality. In addition, for each $i\in\mathcal{V}$, the convexity parameter of its local objective is $\theta_i=\lambda_5^{\downarrow}(A_i)>0$.

For Algorithm~\ref{alg:weightgrad}, we adopt $\alpha^k=1/(Ln)$ for $H_{\mathcal{G}^k}$ in \eqref{eq:Laplacian} and $\alpha^k=1/2$ for $H_{\mathcal{G}^k}$ in \eqref{eq:localweightmatrix} to satisfy the step-size condition \eqref{eq:stepsize}. For the other two methods, we adopt the diminishing step-size $1/k$ and the local (unweighted) averaging operation as the consensus scheme to guarantee convergence. We also let the algorithms all start from the same initial primal iterate.

Figure~\ref{fig:testfig} presents the average primal errors produced by the aforementioned algorithms with different values of $n$, $B$ and $\theta_i$ $\forall i\in\mathcal{V}$. Observe that Algorithm~\ref{alg:weightgrad} with the Metropolis weight matrix \eqref{eq:localweightmatrix} outperforms the others in all six cases. Moreover, although at early stage the subgradient projection method and the proximal minimization method converge faster than Algorithm~\ref{alg:weightgrad} with the Laplacian weight matrix \eqref{eq:Laplacian}, their convergence gradually becomes much slower due to the diminishing nature of the step-size. By comparing Figure~\ref{fig:n50B10theta2} versus~\ref{fig:n50B50theta2} and Figure~\ref{fig:n500B10theta2} versus~\ref{fig:n500B50theta2}, we can see that smaller $B$ leads to faster convergence of Algorithm~\ref{alg:weightgrad}, which is consistent with our convergence analysis in Section~\ref{sec:convanal}, while the impact of $B$ on the subgradient projection method and the proximal minimization method is not apparent. \xw{Besides, Figure~\ref{fig:n50B10theta2} versus~\ref{fig:n500B10theta2} and Figure~\ref{fig:n50B50theta2} versus~\ref{fig:n500B50theta2} suggest that Algorithm~\ref{alg:weightgrad} with $H_{\mathcal{G}^k}$ in \eqref{eq:localweightmatrix} is more scalable to the network size $n$ than the others. Additionally, by comparing Figures~\ref{fig:n50B10theta02} and~\ref{fig:n50B10theta5} with Figure~\ref{fig:n50B10theta2}, it can be inferred that the larger the $\theta_i$'s are, the better Algorithm~\ref{alg:weightgrad} performs.}

\subsection{Unconstrained case}

\xw{In Section~\ref{ssec:dualBconnected}, we have compared Algorithm~\ref{alg:weightgrad} versus Subgradient-Push \cite{Nedic15}, Gradient-Push \cite{Nedic16b}, DIGing \cite{Nedic17}, and Push-DIGing \cite{Nedic17} in the theoretical aspects. Here, we compare, via simulation, their convergence performance in solving the following unconstrained quadratic program that satisfies all the assumptions in \cite{Nedic16b,Nedic17}:
\begin{align}
\textstyle{\operatorname{minimize}_{x\in\mathbb{R}^5}\sum_{i\in\mathcal{V}}}(x^TA_ix+b_i^Tx),\label{eq:unconstrainedprob}
\end{align}
where we let $\theta_i=\lambda_5^{\downarrow}(A_i)\in(2,3)$ $\forall i\in\mathcal{V}$ and $(n,B)=(50,10)$. For fair comparison, we assume there is no stochastic error in gradient evaluation for Gradient-Push. Then, Gradient-Push and Subgradient-Push have the same algorithmic form when the local objectives are differentiable, and below we omit Subgradient-Push.

Figure~\ref{fig:unconstrainedtheo} plots the evolution of the average primal error for Gradient-Push, DIGing, Push-DIGing, and Algorithm \ref{alg:weightgrad} with the Laplacian weight matrix \eqref{eq:Laplacian} and with the Metropolis weight matrix \eqref{eq:localweightmatrix}. We adopt the same step-sizes for Algorithm~\ref{alg:weightgrad} as in Section~\ref{ssec:constrained}. For the other three methods, we fine-tune the step-sizes while satisfying the step-size conditions in \cite{Nedic16b,Nedic17} that theoretically ensure their convergence rates. Observe that Gradient-Push, DIGing, and Push-DIGing almost stop making progress after a few iterations with a non-negligible primal error, while Algorithm~\ref{alg:weightgrad} achieves much better accuracy with the above two choices of $H_{\mathcal{G}^k}$. 

As all the convergence rate results in \cite{Nedic16b,Nedic17} and this paper are derived from worst-case analysis, the theoretical step-size conditions could be very conservative. Thus, in Figure~\ref{fig:unconstrainedbest} we empirically choose the step-sizes for these algorithms, whose values may violate the theoretical conditions but speed up convergence. After some tuning, we select the step-sizes to be $1/(nL)$, $1.7$, $0.15/k$, $0.05$, and $0.04$ for Algorithm \ref{alg:weightgrad} with $H_{\mathcal{G}^k}$ in \eqref{eq:Laplacian}, Algorithm~\ref{alg:weightgrad} with $H_{\mathcal{G}^k}$ in \eqref{eq:localweightmatrix}, Gradient-Push, DIGing, and Push-DIGing, respectively. Note that for Algorithm \ref{alg:weightgrad} with $H_{\mathcal{G}^k}$ in \eqref{eq:Laplacian}, the empirical step-size coincides with the theoretical one in Figure~\ref{fig:unconstrainedtheo}. By comparing Figure~\ref{fig:unconstrainedbest} with Figure~\ref{fig:unconstrainedtheo}, we can observe that with the above empirically-selected step-sizes, Gradient-Push slightly accelerates its convergence, DIGing and Push-DIGing exhibit prominently improved convergence performance, yet Algorithm~\ref{alg:weightgrad} with $H_{\mathcal{G}^k}$ in \eqref{eq:localweightmatrix} still performs best.}

\begin{figure}[tb]
\centering
\subfigure[Theoretically-selected step-sizes]{
\includegraphics[width=0.7\linewidth, height=0.4\linewidth]{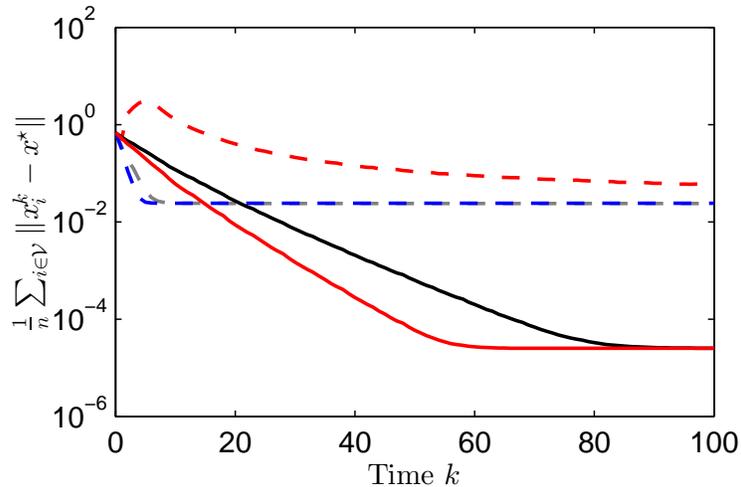}\label{fig:unconstrainedtheo}
}
\vfill
\subfigure[Empirically-selected step-sizes]{
\includegraphics[width=0.7\linewidth, height=0.4\linewidth]{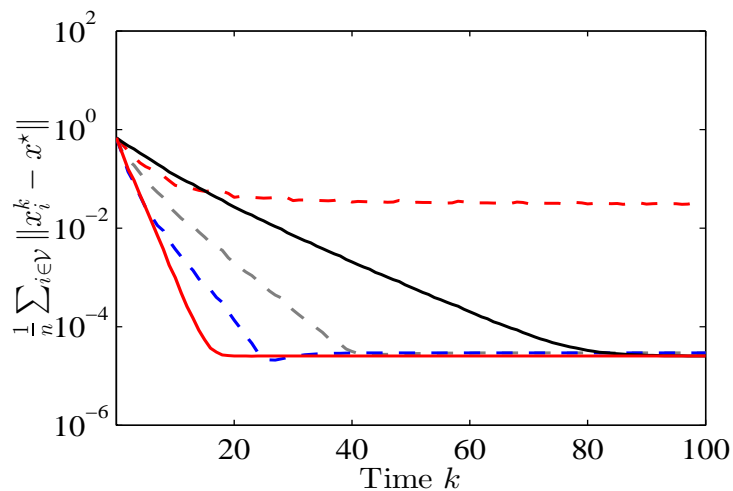}\label{fig:unconstrainedbest}
}
\caption{\xw{Primal errors in solving problem~\eqref{eq:unconstrainedprob} (The red dashed, grey dashed, blue dashed, black solid, and red solid curves correspond to Gradient-Push, DIGing, Push-DIGing, Algorithm~\ref{alg:weightgrad} with $H_{\mathcal{G}^k}$ in \eqref{eq:Laplacian}, and Algorithm~\ref{alg:weightgrad} with $H_{\mathcal{G}^k}$ in \eqref{eq:localweightmatrix}, respectively.).}}\label{fig:unconstrained}
\end{figure}

\section{Conclusion}\label{sec:conclusion}

We have constructed a family of distributed Fenchel dual gradient methods for solving multi-agent optimization problems with strongly convex local objectives and nonidentical local constraints over time-varying networks. The proposed algorithms have been proved to asymptotically converge to the optimal solution under a minimal connectivity condition, and have an $O(1/\sqrt{k})$ convergence rate under a standard connectivity condition. Simulation results have illustrated the competitive performance of the distributed Fenchel dual gradient methods by comparing them with related algorithms. \xw{In future, this work may be extended in a number of directions such as problems with general convex objective functions and networks with directed links.} 

\appendix
\section{Appendix}\label{sec:app}

\subsection{Proof of Proposition~\ref{pro:boundedlevelset}}\label{ssec:proofofpro:boundedlevelset}

Let $\mathbf{w}^{\star}=((w_1^\star)^T,\ldots,(w_n^\star)^T)^T$ be an optimal solution of problem \eqref{eq:dualprob}. Since Assumption~\ref{asm:problem}(b) assumes $\mathbf{0}_d\in\operatorname{int}\bigcap_{i\in\mathcal{V}}X_i$, there exists $r_c\in(0,\infty)$ such that $B(\mathbf{0}_d,r_c)\subseteq\bigcap_{i\in\mathcal{V}}X_i$. For each $i\in\mathcal{V}$, if $w_i^\star\neq\mathbf{0}_d$, let $x_i'=r_c\frac{w_i^{\star}}{\|w_i^{\star}\|}$; otherwise let $x_i'=\mathbf{0}_d$. Clearly, $x_i'\in B(\mathbf{0}_d,r_c)$. Consequently,
\begin{align*}
&D^\star=D(\mathbf{w}^{\star})=\sum_{i\in\mathcal{V}}\Bigl(\sup_{x_i\in X_i}(w_i^\star)^Tx_i-f_i(x_i)\Bigr)\displaybreak[0]\\
&\ge\sum_{i\in\mathcal{V}}\Bigl((w_i^\star)^Tx_i'-f_i(x_i')\Bigr)=r_c\sum_{i\in\mathcal{V}}\|w_i^\star\|-\sum_{i\in\mathcal{V}}f_i(x_i').
\end{align*}
This, along with $\|\mathbf{w}^\star\|\le\sum_{i\in\mathcal{V}}\|w_i^\star\|$ and $D^\star=-F^\star$, implies that $\|\mathbf{w}^\star\|\le \bigl((\sum_{i\in\mathcal{V}}f_i(x_i'))-F^\star\bigr)/r_c$. Note that $\sum_{i\in\mathcal{V}}f_i(x_i')\le\sum_{i\in\mathcal{V}}\max_{x_i\in B(\mathbf{0}_d,r_c)}f_i(x_i)$, where $F^\star\le\displaystyle{\sum_{i\in\mathcal{V}}\max_{x_i\in B(\mathbf{0}_d,r_c)}}f_i(x_i)<\infty$ because $B(\mathbf{0}_d,r_c)$ is compact. Therefore, \eqref{eq:dualoptimumbound} holds, which suggests that the optimal set of problem~\eqref{eq:dualprob} is compact. Then, due to the convexity of $D$ and $S^\bot$, the level sets $S_0(\mathbf{w})$ $\forall \mathbf{w}\in S^\bot$ are compact \cite[proposition 1.4.5]{Bertsekas09}.

\subsection{Proof of Lemma~\ref{lemma:funcvaldescent}}\label{ssec:proofoflemma:funcvaldescent}

For convenience, let $\mathbf{y}^k = (H_{\mathcal{G}^k}\otimes I_d)\nabla D(\mathbf{w}^k)$. Due to the Descent Lemma \cite{Bertsekas99} and \eqref{eq:weightgrad},
\begin{align}\label{eq:descentlemma}
D(\mathbf{w}^{k+1})-D(\mathbf{w}^k)&\leq\langle \nabla D(\mathbf{w}^k),\mathbf{w}^{k+1}-\mathbf{w}^k\rangle+(\mathbf{w}^{k+1}-\mathbf{w}^k)^T\frac{\Lambda_L\otimes I_d}{2}(\mathbf{w}^{k+1}-\mathbf{w}^k)\nonumber\\
&=-\alpha^k\langle \nabla D(\mathbf{w}^k), \mathbf{y}^k\rangle+(\alpha^k)^2(\mathbf{y}^k)^T\frac{\Lambda_L\otimes I_d}{2}\mathbf{y}^k.
\end{align}

Then, consider the following lemma. 
\begin{lemma}\label{lemma:psdmatrixbound}
Suppose $M, \bar{M}\in \mathbb{R}^{n\times n}$ are symmetric positive semidefinite and $M\preceq \bar{M}$. Then, for any $\mathbf{x}\in \mathbb{R}^{nd}$ and any $\mathbf{y}\in \mathcal{R}(M\otimes I_d)$,
\begin{align*}
\langle \mathbf{x}, (M\otimes I_d)\mathbf{x}\rangle \geq \langle (M\otimes I_d)\mathbf{x}, (\bar{M}^{\dag}\otimes I_d)(M\otimes I_d)\mathbf{x}\rangle.
\end{align*}
\end{lemma}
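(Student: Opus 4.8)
The plan is to prove the displayed inequality for an arbitrary $\mathbf{x}\in\mathbb{R}^{nd}$ by combining the nonnegativity of the quadratic form induced by $M$ with a single well-chosen test vector, after first passing from the Kronecker-product matrices to ordinary ones. Write $A:=M\otimes I_d$ and $\bar{A}:=\bar{M}\otimes I_d$. Standard Kronecker identities give that $A$ and $\bar{A}$ are symmetric positive semidefinite, that $M\preceq\bar{M}$ implies $A\preceq\bar{A}$, and that $\bar{A}^{\dag}=\bar{M}^{\dag}\otimes I_d$; thus the claim is exactly $\langle\mathbf{x},A\mathbf{x}\rangle\ge\langle A\mathbf{x},\bar{A}^{\dag}A\mathbf{x}\rangle$, and I need only work with $A,\bar{A}$. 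As a preliminary observation, $A\preceq\bar{A}$ already forces $\operatorname{Null}(\bar{A})\subseteq\operatorname{Null}(A)$ (if $\bar{A}v=\mathbf{0}$ then $0\le v^TAv\le v^T\bar{A}v=0$), hence $\mathcal{R}(A)\subseteq\mathcal{R}(\bar{A})$, so that $A\mathbf{x}$ lies in $\mathcal{R}(\bar{A})$; this is the condition placing the vector $\mathbf{y}=A\mathbf{x}$ of the statement in $\mathcal{R}(M\otimes I_d)\subseteq\mathcal{R}(\bar{M}\otimes I_d)$ and making the pseudoinverse behave like an inverse on the relevant subspace.

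The core of the argument is to observe that, since $A$ is positive semidefinite, $0\le(\mathbf{x}-\mathbf{z})^T A(\mathbf{x}-\mathbf{z})$ for every $\mathbf{z}\in\mathbb{R}^{nd}$, which rearranges to $\langle\mathbf{x},A\mathbf{x}\rangle\ge 2\langle\mathbf{z},A\mathbf{x}\rangle-\langle\mathbf{z},A\mathbf{z}\rangle$. Using $A\preceq\bar{A}$ to bound $\langle\mathbf{z},A\mathbf{z}\rangle\le\langle\mathbf{z},\bar{A}\mathbf{z}\rangle$ then yields, for all $\mathbf{z}$,
\begin{align*}
\langle\mathbf{x},A\mathbf{x}\rangle\ge 2\langle\mathbf{z},A\mathbf{x}\rangle-\langle\mathbf{z},\bar{A}\mathbf{z}\rangle.
\end{align*}
The right-hand side is concave in $\mathbf{z}$ and is maximized where $\bar{A}\mathbf{z}=A\mathbf{x}$; since $A\mathbf{x}\in\mathcal{R}(\bar{A})$, the vector $\mathbf{z}=\bar{A}^{\dag}A\mathbf{x}$ is such a maximizer, so I would substitute this particular $\mathbf{z}$ into the inequality above.

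It then remains to evaluate the right-hand side at $\mathbf{z}=\bar{A}^{\dag}A\mathbf{x}$. The symmetry of $\bar{A}^{\dag}$ gives $2\langle\mathbf{z},A\mathbf{x}\rangle=2\langle A\mathbf{x},\bar{A}^{\dag}A\mathbf{x}\rangle$, while the Moore--Penrose relation $\bar{A}^{\dag}\bar{A}\bar{A}^{\dag}=\bar{A}^{\dag}$ gives $\langle\mathbf{z},\bar{A}\mathbf{z}\rangle=\langle A\mathbf{x},\bar{A}^{\dag}\bar{A}\bar{A}^{\dag}A\mathbf{x}\rangle=\langle A\mathbf{x},\bar{A}^{\dag}A\mathbf{x}\rangle$. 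Subtracting leaves exactly $\langle A\mathbf{x},\bar{A}^{\dag}A\mathbf{x}\rangle$, so $\langle\mathbf{x},A\mathbf{x}\rangle\ge\langle A\mathbf{x},\bar{A}^{\dag}A\mathbf{x}\rangle$, which is the claim. I do not expect a genuinely hard estimate anywhere; the only delicate point is the bookkeeping with the pseudoinverse -- identifying the test vector $\mathbf{z}=\bar{A}^{\dag}A\mathbf{x}$, using $A\mathbf{x}\in\mathcal{R}(\bar{A})$, and invoking the correct Moore--Penrose identities -- after which everything reduces to routine algebra.
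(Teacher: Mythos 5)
Your proof is correct, and it takes a recognizably different route from the paper's. The paper proceeds in matrix language: it writes the difference of the two quadratic forms as $\mathbf{x}^T[(M-M\bar{M}^{\dag}M)\otimes I_d]\mathbf{x}$, observes that $M\succeq O_n$ and $\bar{M}\succeq M$ make the block matrix $\bigl(\begin{smallmatrix}M & M\\ M & \bar{M}\end{smallmatrix}\bigr)$ positive semidefinite, and invokes the (generalized) Schur complement condition to conclude $M-M\bar{M}^{\dag}M\succeq O_n$. You instead give a variational argument: the quadratic lower bound $\langle\mathbf{x},A\mathbf{x}\rangle\ge 2\langle\mathbf{z},A\mathbf{x}\rangle-\langle\mathbf{z},\bar{A}\mathbf{z}\rangle$ valid for every test vector $\mathbf{z}$ (from $A\succeq 0$ and $A\preceq\bar{A}$), followed by the substitution $\mathbf{z}=\bar{A}^{\dag}A\mathbf{x}$ and the Moore--Penrose identities ($\bar{A}^{\dag}$ symmetric, $\bar{A}^{\dag}\bar{A}\bar{A}^{\dag}=\bar{A}^{\dag}$). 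In substance these are the same fact --- your completion-of-squares computation is essentially an inline proof of the Schur-complement lemma specialized to this block matrix --- but your version is self-contained and elementary, avoids citing the pseudoinverse Schur complement condition (whose range hypotheses one would otherwise need to check), and makes explicit exactly which Moore--Penrose properties are used; indeed, as you implicitly note, the substitution is valid for arbitrary $\mathbf{z}$, so the range inclusion $\mathcal{R}(A)\subseteq\mathcal{R}(\bar{A})$ is needed only to certify that your choice of $\mathbf{z}$ is optimal, not for the inequality itself. The paper's proof buys brevity given the cited lemma; yours buys transparency. One further remark applying to both proofs: the hypothesis $\mathbf{y}\in\mathcal{R}(M\otimes I_d)$ in the statement plays no role --- the inequality holds for all $\mathbf{x}\in\mathbb{R}^{nd}$ outright --- and your observation that $\operatorname{Null}(\bar{A})\subseteq\operatorname{Null}(A)$, hence $\mathcal{R}(A)\subseteq\mathcal{R}(\bar{A})$, correctly explains why that hypothesis is automatic for the vectors to which the lemma is later applied.
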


\begin{proof}
Let $\mathbf{x}\in \mathbb{R}^{nd}$. Then,
\begin{align}
\langle \mathbf{x}, (M\otimes I_d)\mathbf{x}\rangle-\langle (M\otimes I_d)\mathbf{x}, (\bar{M}^{\dag}\otimes I_d)(M\otimes I_d)\mathbf{x}\rangle= \mathbf{x}^T[(M-M\bar{M}^{\dag}M)\otimes I_d]\mathbf{x}.\label{eq:psdmatrixlarge}
\end{align}
In addition, by Schur complement condition, $M\succeq O_n$ and $\bar{M}\succeq M$ implies
\begin{align*}
    \left(
      \begin{array}{cc}
        M & M\\
        M & \bar{M}\\
      \end{array}
    \right) \succeq O_{2n}
\end{align*}
and the inequality above leads to $M-M\bar{M}^{\dag}M\succeq O_n$. Combining this with \eqref{eq:psdmatrixlarge}, the proof can be completed.
\end{proof}

From Lemma~\ref{lemma:psdmatrixbound}, $(\mathbf{y}^k)^T(\Lambda_L \otimes I_d)\mathbf{y}^k\!\leq\!\delta\langle \nabla D(\mathbf{w}^k), \mathbf{y}^k\rangle$. Combining this with \eqref{eq:descentlemma} leads to
\begin{align*}
D(\mathbf{w}^{k+1})-D(\mathbf{w}^k) \leq (\frac{(\alpha^k)^2\delta}{2}-\alpha^k)\langle \nabla D(\mathbf{w}^k), \mathbf{y}^k\rangle.
\end{align*}
This, along with \eqref{eq:stepsize}, completes the proof.

\subsection{Proof of Lemma~\ref{lemma:convergesgradient}}\label{ssec:proofoflemmaconvergesgradient}

We first consider the following optimization problem: For any $\mathcal{I}\subseteq \mathcal{V}$, $\mathcal{I}\neq\emptyset$ and any $c\in \mathbb{R}^d$,
\begin{align}
\begin{array}{ll}\underset{w_i\in\mathbb{R}^d\;\forall i\in\mathcal{I}}{\mbox{minimize}} & \sum_{i\in\mathcal{I}} d_i(w_i)\\ \operatorname{subject\,to} & \sum_{i\in \mathcal{I}} w_i=c.\end{array}
    \label{problem:infiniteconnectsubproblem}
\end{align}
Similar to problem~\eqref{eq:dualprob}, $w_i'$ $\forall i\in\mathcal{I}$ compose an optimum to \eqref{problem:infiniteconnectsubproblem} if and only if for any $i,j\in\mathcal{I}$, $\nabla d_i(w_i')=\nabla d_j(w_j')$ \cite[Lemma 3.1]{Lakshmanan08}, or equivalently, $\tilde{x}_i(w_i')=\tilde{x}_j(w_j')$. With the above setting, consider the following lemma.

\begin{lemma}\label{lemma:boundedconsensuserror}
Suppose Assumption~\ref{asm:problem} and the step-size condition \eqref{eq:stepsize} hold. Let $\mathbf{u},\mathbf{v}\in \mathbb{R}^{nd}$ be two feasible solutions of problem~\eqref{eq:dualprob} such that $u_i$ $\forall i\in\mathcal{I}$ and $v_i$ $\forall i\in\mathcal{I}$ are feasible to problem~\eqref{problem:infiniteconnectsubproblem}. Suppose $\|\tilde{x}_i(v_i)-\tilde{x}_j(v_j)\|\leq \epsilon'$ $\forall i,j\in \mathcal{I}$ for some $\epsilon'>0$, $\sum_{i\in \mathcal{I}} d_i(u_i)\leq \sum_{i\in \mathcal{I}} d_i(v_i)$, and $D(\mathbf{v})\leq D(\mathbf{w}^0)$, where $\mathbf{w}^0\in S^\bot$ is the initial dual iterate of Algorithm \ref{alg:weightgrad}. Then,
    \begin{align*}
        \|\tilde{x}_i(u_i)-\tilde{x}_j(u_j)\| \leq4\sqrt{LM_0(|\mathcal{I}|-1)\epsilon'},\quad\forall i,j\in\mathcal{I},
    \end{align*}
where $M_0$ is defined in \eqref{eq:M0}.
\end{lemma}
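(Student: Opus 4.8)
The plan is to reduce the statement to a gradient-domination argument for the restricted objective $\Phi(\mathbf{w}_{\mathcal{I}}):=\sum_{i\in\mathcal{I}}d_i(w_i)$ over the affine feasible set $A_c:=\{\mathbf{w}_{\mathcal{I}}:\sum_{i\in\mathcal{I}}w_i=c\}$ of problem~\eqref{problem:infiniteconnectsubproblem}, whose tangent space is $T:=\{\mathbf{z}:\sum_{i\in\mathcal{I}}z_i=\mathbf{0}_d\}$. First I would fix a minimizer $\mathbf{w}_{\mathcal{I}}^\star$ of problem~\eqref{problem:infiniteconnectsubproblem} with optimal value $\Phi^\star$; its existence follows by extending any feasible $\mathbf{w}_{\mathcal{I}}$ to an $S^\bot$-vector (setting the $i\notin\mathcal{I}$ blocks to $v_i$) and invoking the compactness of the level sets of $D$ from Proposition~\ref{pro:boundedlevelset}. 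Since $\nabla\Phi(\mathbf{w}_{\mathcal{I}})=(\tilde{x}_i(w_i))_{i\in\mathcal{I}}$ and each $\nabla d_i$ is $L_i$-Lipschitz with $L_i\le L$ (Proposition~\ref{pro:Lipschitz}, Corollary~\ref{cor:Lipschitz}), the restriction of $\Phi$ to $A_c$ is convex and $L$-smooth, and the projected gradient at a feasible point equals $P_T\nabla\Phi(\mathbf{w}_{\mathcal{I}})=(\tilde{x}_i(w_i)-\bar{x})_{i\in\mathcal{I}}$ with $\bar{x}:=\frac{1}{|\mathcal{I}|}\sum_{i\in\mathcal{I}}\tilde{x}_i(w_i)$.

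The chain then has two ends. At the $u$-end, the standard gradient-domination inequality for convex $L$-smooth functions on $A_c$ gives $\Phi(\mathbf{u}_{\mathcal{I}})-\Phi^\star\ge\frac{1}{2L}\|P_T\nabla\Phi(\mathbf{u}_{\mathcal{I}})\|^2$; since the pair $(i^\star,j^\star)$ attaining $\max_{i,j}\|\tilde{x}_i(u_i)-\tilde{x}_j(u_j)\|$ satisfies $\max\{\|\tilde{x}_{i^\star}(u_{i^\star})-\bar{x}\|,\|\tilde{x}_{j^\star}(u_{j^\star})-\bar{x}\|\}\ge\frac12\max_{i,j}\|\tilde{x}_i(u_i)-\tilde{x}_j(u_j)\|$, this yields $\max_{i,j}\|\tilde{x}_i(u_i)-\tilde{x}_j(u_j)\|^2\le 8L(\Phi(\mathbf{u}_{\mathcal{I}})-\Phi^\star)$. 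At the $v$-end, convexity gives $\Phi(\mathbf{v}_{\mathcal{I}})-\Phi^\star\le\langle\nabla\Phi(\mathbf{v}_{\mathcal{I}}),\mathbf{v}_{\mathcal{I}}-\mathbf{w}_{\mathcal{I}}^\star\rangle$; because $\sum_{i\in\mathcal{I}}(v_i-w_i^\star)=\mathbf{0}_d$ I may recenter the gradient by $\bar{x}_v$ and bound the inner product by $(\max_{i\in\mathcal{I}}\|v_i-w_i^\star\|)\sum_{i\in\mathcal{I}}\|\tilde{x}_i(v_i)-\bar{x}_v\|$, where $\sum_{i\in\mathcal{I}}\|\tilde{x}_i(v_i)-\bar{x}_v\|\le(|\mathcal{I}|-1)\epsilon'$ follows from the hypothesis $\|\tilde{x}_i(v_i)-\tilde{x}_j(v_j)\|\le\epsilon'$. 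The assumption $\sum_{i\in\mathcal{I}}d_i(u_i)\le\sum_{i\in\mathcal{I}}d_i(v_i)$, i.e.\ $\Phi(\mathbf{u}_{\mathcal{I}})\le\Phi(\mathbf{v}_{\mathcal{I}})$, then splices the two ends together.

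The main obstacle is the remaining factor $\max_{i\in\mathcal{I}}\|v_i-w_i^\star\|$, which must be controlled by $M_0$ even though $\mathbf{w}_{\mathcal{I}}^\star$ solves only the restricted problem. The trick I would use is to form the hybrid vector $\hat{\mathbf{w}}\in\mathbb{R}^{nd}$ with blocks $w_i^\star$ for $i\in\mathcal{I}$ and $v_i$ for $i\notin\mathcal{I}$: since $\mathbf{v}\in S^\bot$ and $\sum_{i\in\mathcal{I}}w_i^\star=\sum_{i\in\mathcal{I}}v_i=c$ one checks $\hat{\mathbf{w}}\in S^\bot$, and $D(\hat{\mathbf{w}})=\Phi^\star+\sum_{i\notin\mathcal{I}}d_i(v_i)\le D(\mathbf{v})\le D(\mathbf{w}^0)$, so both $\mathbf{v}$ and $\hat{\mathbf{w}}$ lie in the level set $S_0(\mathbf{w}^0)$. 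As $M_0$ in \eqref{eq:M0} bounds the distance from any level-set point to any dual optimum, the triangle inequality shows $S_0(\mathbf{w}^0)$ has diameter at most $2M_0$, whence $\max_{i\in\mathcal{I}}\|v_i-w_i^\star\|\le\|\mathbf{v}-\hat{\mathbf{w}}\|\le 2M_0$. Combining the three bounds gives $\max_{i,j}\|\tilde{x}_i(u_i)-\tilde{x}_j(u_j)\|^2\le 8L\cdot 2M_0(|\mathcal{I}|-1)\epsilon'=16LM_0(|\mathcal{I}|-1)\epsilon'$, and taking square roots yields the claim. I expect this level-set-diameter step to be the crux; the gradient-domination and recentering steps are routine once the $L$-smoothness of $\Phi$ on $A_c$ is noted.
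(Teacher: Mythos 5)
Your proof is correct and lands exactly on the paper's constant $4\sqrt{LM_0(|\mathcal{I}|-1)\epsilon'}$, but only half of it follows the paper's route. The $v$-end coincides with the paper's argument: the paper likewise introduces a restricted optimum $w_i'$ $\forall i\in\mathcal{I}$ of \eqref{problem:infiniteconnectsubproblem}, pads it with $v_j$ for $j\notin\mathcal{I}$ (your hybrid $\hat{\mathbf{w}}$ is precisely its $\mathbf{w}'$), derives $\sum_{i\in \mathcal{I}} d_i(v_i)-\sum_{i\in \mathcal{I}} d_i(w_i')\le 2M_0(|\mathcal{I}|-1)\epsilon'$ by the same convexity-plus-recentering step (it uses Cauchy--Schwarz on $\sum_i\|v_i-w_i'\|$ where you use a max/H\"older bound, which is immaterial), and obtains the factor $2M_0$ from the same containment $D(\mathbf{w}')\le D(\mathbf{v})\le D(\mathbf{w}^0)$ in the level set $S_0(\mathbf{w}^0)$. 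Where you genuinely diverge is the $u$-end. The paper invokes the first-order characterization $\nabla d_i(w_i')=\nabla d_j(w_j')$ $\forall i,j\in\mathcal{I}$ of the restricted optimum (citing Lemma 3.1 of \cite{Lakshmanan08}), combines it with the feasibility identity $\sum_{i\in\mathcal{I}}\langle\nabla d_i(w_i'),u_i-w_i'\rangle=0$ and the smoothness lower bound $d_i(u_i)-d_i(w_i')\ge\langle \nabla d_i(w_i'), u_i-w_i'\rangle+\frac{1}{2L}\|\nabla d_i(w_i')-\nabla d_i(u_i)\|^2$ to get $\sum_{i\in\mathcal{I}}\|\tilde{x}_i(u_i)-\tilde{x}_i(w_i')\|^2\le 4LM_0(|\mathcal{I}|-1)\epsilon'$, and finishes with a triangle inequality through the common value $\tilde{x}_i(w_i')$. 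You instead apply gradient domination to the $L$-smooth restriction on the affine set and relate the pairwise maximum to the distance-to-mean; this is equally valid (your formula for the projected gradient is right, since the orthogonal complement of your $T$ consists of consensus directions, and the domination inequality holds intrinsically on the affine set), and by a small coincidence both routes produce the identical constant, your $8L\cdot 2M_0$ matching the paper's $2^2\cdot 4LM_0$ after its triangle inequality. What your route buys: it never needs the equal-gradient optimality condition, and your explicit Weierstrass existence argument for the restricted minimizer (embedding feasible points into $S^\bot$ and using the compactness of $S_0(\mathbf{w}^0)$ from Proposition~\ref{pro:boundedlevelset}) makes precise a point the paper leaves implicit. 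What the paper's route buys: the intermediate bound controls each $\tilde{x}_i(u_i)$ against the single optimal primal value of the subproblem, which is marginally stronger information than pairwise dispersion, though the statement of Lemma~\ref{lemma:boundedconsensuserror} does not require it.
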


\begin{proof}
Let $\mathbf{w}'=(w_1'^T,\ldots,w_n'^T)^T\in\mathbb{R}^{nd}$ be such that $w_i'\in\mathbb{R}^d$ $\forall i\in\mathcal{I}$ compose an optimal solution to \eqref{problem:infiniteconnectsubproblem} and $w_j'=v_j$ $\forall j\notin \mathcal{I}$. Due to the convexity of each $d_i$ and \eqref{eq:dual_gradient},
\begin{align*}
\sum_{i\in \mathcal{I}} d_i(v_i)-\sum_{i\in \mathcal{I}} d_i(w_i')\leq \sum_{i\in \mathcal{I}} \langle \tilde{x}_i(v_i), v_i-w_i'\rangle.
\end{align*}
Let $\bar{x}_v:=\frac{1}{|\mathcal{I}|}\sum\limits_{i\in \mathcal{I}}\tilde{x}_i(v_i)$. Since $w_i'$ $\forall i\in\mathcal{I}$ and $v_i$ $\forall i\in\mathcal{I}$ are feasible to \eqref{problem:infiniteconnectsubproblem}, we have $\sum_{i\in\mathcal{I}}w_i'=\sum_{i\in\mathcal{I}}v_i$, which gives
\begin{align*}
\sum_{i\in \mathcal{I}} \langle \tilde{x}_i(v_i), v_i-w_i'\rangle&=\sum_{i\in \mathcal{I}} \langle \tilde{x}_i(v_i)-\bar{x}_v, v_i-w_i'\rangle\leq\sum_{i\in \mathcal{I}}\|\tilde{x}_i(v_i)-\bar{x}_v\|\cdot\|v_i-w_i'\|.
\end{align*}
Also note that for each $i\in\mathcal{I}$, $\|\tilde{x}_i(v_i)-\bar{x}_v\|=\frac{1}{|\mathcal{I}|}\|\sum_{j\in\mathcal{I}} (\tilde{x}_i(v_i)-\tilde{x}_j(v_j))\| \leq \frac{|\mathcal{I}|-1}{|\mathcal{I}|}\epsilon'$. Combining the above,
\begin{align}
\sum_{i\in \mathcal{I}} d_i(v_i)-\sum_{i\in \mathcal{I}} d_i(w_i')\leq \frac{|\mathcal{I}|-1}{|\mathcal{I}|}\epsilon'\sum_{i\in \mathcal{I}} \|v_i-w_i'\|\leq (|\mathcal{I}|-1)\epsilon'\sqrt{\sum_{i\in \mathcal{I}} \|v_i-w_i'\|^2}.\label{eq:sumdd<=I1epsilonvw}
\end{align}
Since $\sum_{i\in\mathcal{I}}d_i(w_i')\le\sum_{i\in\mathcal{I}}d_i(v_i)$ and $w_j'=v_j$ $\forall j\notin \mathcal{I}$, we have $D(\mathbf{w}')\leq D(\mathbf{v})\leq D(\mathbf{w}^0)$, implying that $\mathbf{w}',\mathbf{v}\in S_0(\mathbf{w}^0)$ and that for any optimum $\mathbf{w}^{\star}$ of problem~\eqref{eq:dualprob},
\begin{align*}
    \|\mathbf{w}'-\mathbf{v}\|\leq \|\mathbf{w}'-\mathbf{w}^{\star}\|+\|\mathbf{v}-\mathbf{w}^{\star}\|\leq 2M_0.
\end{align*}
This inequality and \eqref{eq:sumdd<=I1epsilonvw} together yield
\begin{align}
         &\sum_{i\in \mathcal{I}} d_i(v_i)-\sum_{i\in \mathcal{I}} d_i(w_i')\leq 2M_0(|\mathcal{I}|-1)\epsilon'.\label{eq:Lemma12ineq1}
\end{align}
Due to the optimality of $w_i'$ $\forall i\in\mathcal{I}$ with respect to \eqref{problem:infiniteconnectsubproblem}, we have $\nabla d_i(w_i')=\nabla d_j(w_j')$ $\forall i,j\in \mathcal{I}$. Also, because of the feasibility of $u_i$ $\forall i\in\mathcal{I}$, $\sum_{i\in \mathcal{I}}u_i=\sum_{i\in \mathcal{I}}w_i'$. Therefore, $\sum_{i\in \mathcal{I}}\langle \nabla d_i(w_i'), u_i-w_i'\rangle =0$. This, along with \eqref{eq:dual_gradient}, \eqref{eq:Lemma12ineq1}, and the inequality $d_i(u_i)-d_i(w_i')\geq\langle \nabla d_i(w_i'), u_i-w_i'\rangle + \frac{1}{2L}\|\nabla d_i(w_i')-\nabla d_i(u_i)\|^2$ \cite[Theorem 2.1.5]{Nesterov04}, implies
\begin{align*}
&\sum_{i\in \mathcal{I}}\|\tilde{x}_i(u_i)-\tilde{x}_i(w_i')\|^2\leq 2L\sum_{i\in \mathcal{I}}(d_i(u_i)-d_i(w_i'))\displaybreak[0]\\
&\leq 2L\sum_{i\in \mathcal{I}}(d_i(v_i)-d_i(w_i'))\leq 4LM_0(|\mathcal{I}|-1)\epsilon'.
\end{align*}
Hence, for any $i,j\in\mathcal{I}$, we have
$\|\tilde{x}_i(u_i)-\tilde{x}_j(u_j)\|\leq\|\tilde{x}_i(u_i)-\tilde{x}_i(w_i')\|+\|\tilde{x}_j(u_j)-\tilde{x}_j(w_j')\|\leq4\sqrt{LM_0(|\mathcal{I}|-1)\epsilon'}$, where the first inequality is from the optimality of $w_i'$ $\forall i\in\mathcal{I}$ and \eqref{eq:dual_gradient}.
\end{proof}

Next, we define the following: Arbitrarily pick $\epsilon>0$. Due to \eqref{eq:limmaxxx=0}, $\exists T_{\epsilon}\ge0$ such that
\begin{align}
\|x_i^k-x_j^k\|\leq \epsilon,\quad\forall\{i,j\}\in \mathcal{E}^k,\;\forall k\geq T_{\epsilon}.\label{eq:xx<=epsilon}
\end{align}
Then, for each $i\in\mathcal{V}$, let $\mathcal{C}_{i,\epsilon}^k=\emptyset$ $\forall k\in[0,T_\epsilon)$. For each $k\ge T_{\epsilon}$, let
\begin{align*}
\mathcal{C}_{i,\epsilon}^k=&\{i\}\cup\{j\in\mathcal{V}:\text{There exists a path between $i$ and $j$ }\text{in the graph }(\mathcal{V},\cup_{t=T_\epsilon}^k\mathcal{E}^t)\}\subseteq\mathcal{V}.
\end{align*}
For each $k\ge T_\epsilon$, observe that in the graph $(\mathcal{V},\cup_{t=T_\epsilon}^k\mathcal{E}^t)$, the subgraph induced by $\mathcal{C}_{i,\epsilon}^k$ is the largest connected component that contains node $i$.
Thus, for any two nodes $i$ and $j$, $i\neq j$, $\mathcal{C}_{i,\epsilon}^k$ and $\mathcal{C}_{j,\epsilon}^k$ are either identical or disjoint. Additionally, for every $s\in \mathcal{C}_{i,\epsilon}^{k+1}$, $\mathcal{C}_{s,\epsilon}^{k}$ is always contained in $\mathcal{C}_{i,\epsilon}^{k+1}$. This implies that the number of distinct sets in the collection $\{\mathcal{C}_{i,\epsilon}^k\}_{i\in\mathcal{V}}$ is non-increasing with $k$ over $[T_\epsilon,\infty)$. In particular, from each $k$ to $k+1$, $\mathcal{C}_{i,\epsilon}^{k+1}$ either equals $\mathcal{C}_{i,\epsilon}^{k}$ or is the union of $\mathcal{C}_{i,\epsilon}^{k}$ and some other $\mathcal{C}_{j,\epsilon}^{k}$'s that are disjoint from $\mathcal{C}_{i,\epsilon}^{k}$. Also due to Assumption~\ref{asm:infiniteconnect}, there exists $K_\epsilon\in[T_\epsilon,\infty)$ such that $\mathcal{C}_{i,\epsilon}^k=\mathcal{V}$ $\forall i\in\mathcal{V}$ $\forall k\ge K_\epsilon$.
By means of the $\mathcal{C}_{i,\epsilon}^k$'s and Lemma~\ref{lemma:boundedconsensuserror}, below we show that $\forall i\in\mathcal{V}$, $\forall k\ge T_\epsilon$,
\begin{align}
\max_{j,\ell\in \mathcal{C}_{i,\epsilon}^k}\|x_j^k-x_\ell^k\|\le \Phi_i^k(\epsilon).\label{eq:maxxx<=Phi}
\end{align}
Here, $\Phi_i^k(\epsilon)$ $\forall i\in\mathcal{V}$ $\forall k\ge T_\epsilon$ are defined recursively as follows:
Initially at $k=T_{\epsilon}$, $\Phi_i^{k}(\epsilon)=(|\mathcal{C}_{i,\epsilon}^k|-1)\epsilon$. At each subsequent $k\ge T_\epsilon+1$,
\begin{align*}
\Phi_i^k(\epsilon)=\begin{cases}4\sqrt{LM_0(|\mathcal{C}_{i,\epsilon}^k|-1)\Phi_i^{t^k}(\epsilon)},&\text{if $\mathcal{C}_{i,\epsilon}^k=\mathcal{C}_{i,\epsilon}^{k-1}$,}\\
(1+2L\bar{\alpha}\bar{h}n)|\mathcal{C}_{i,\epsilon}^k|\epsilon
+\sum_{s\in \mathcal{C}_{i,\epsilon}^k}\Phi_s^{k-1}(\epsilon), &\text{\;otherwise,}
\end{cases}
\end{align*}
where $t^k:= \max\{t\in[T_\epsilon,k]:\mathcal{C}_{i,\epsilon}^t\neq\mathcal{C}_{i,\epsilon}^{t-1}\}$. Note that $\mathcal{C}_{i,\epsilon}^k=\mathcal{C}_{i,\epsilon}^t$ $\forall t\in[t^k,k]$.

We prove \eqref{eq:maxxx<=Phi} by induction. At time $k=T_\epsilon$, for each $i\in\mathcal{V}$, if $|\mathcal{C}_{i,\epsilon}^k|=1$, then $\max_{j,\ell\in \mathcal{C}_{i,\epsilon}^k}\|x_j^k-x_\ell^k\|=\Phi_i^k(\epsilon)=0$, i.e., \eqref{eq:maxxx<=Phi} is satisfied; otherwise for any $j,\ell\in \mathcal{C}_{i,\epsilon}^k$, $j\neq\ell$, there exists a path of length at most $|\mathcal{C}_{i,\epsilon}^k|-1$ connecting $j$ and $\ell$. It follows from \eqref{eq:xx<=epsilon} that $\|x_j^k-x_\ell^k\|\le(|\mathcal{C}_{i,\epsilon}^k|-1)\epsilon=\Phi_i^k(\epsilon)$, i.e., \eqref{eq:maxxx<=Phi} also holds. Next, suppose $\max_{j,\ell\in \mathcal{C}_{i,\epsilon}^{t}}\|x_j^{t}-x_\ell^{t}\|\le \Phi_i^{t}(\epsilon)$ $\forall i\in\mathcal{V}$ $\forall t\in[T_\epsilon,k-1]$ for some $k\ge T_\epsilon+1$. For each $i\in\mathcal{V}$, to show that \eqref{eq:maxxx<=Phi} holds, consider the following two cases.

\emph{Case i}: $\mathcal{C}_{i,\epsilon}^k=\mathcal{C}_{i,\epsilon}^{k-1}$. In this case, we have $T_\epsilon\le t^k\le k-1$. Also, $\forall t\in[t^k+1,k]$, $\forall j\in\mathcal{C}_{i,\epsilon}^{t-1}$, we have $\mathcal{N}_j^t\subseteq\mathcal{C}_{i,\epsilon}^{t-1}=\mathcal{C}_{i,\epsilon}^k$ . Hence, using the same arguments as the proofs of Proposition~\ref{pro:dualfeasible} and Lemma~\ref{lemma:funcvaldescent}, it can be shown that $\sum_{s\in\mathcal{C}_{i,\epsilon}^k}w_s^k=\sum_{s\in\mathcal{C}_{i,\epsilon}^k}w_s^{k-1}=\cdots=\sum_{s\in\mathcal{C}_{i,\epsilon}^k}w_s^{t^k}$ and that $\sum_{s\in\mathcal{C}_{i,\epsilon}^k}d_s(w_s^k)\le\sum_{s\in\mathcal{C}_{i,\epsilon}^k}d_s(w_s^{k-1})\le\cdots\le\sum_{s\in\mathcal{C}_{i,\epsilon}^k}d_s(w_s^{t^k})$. Let $\mathcal{I}=\mathcal{C}_{i,\epsilon}^k$ and $c=\sum_{s\in\mathcal{C}_{i,\epsilon}^k}w_s^{t^k}$ in problem~\eqref{problem:infiniteconnectsubproblem}. It then follows from Lemma~\ref{lemma:funcvaldescent} and Lemma \ref{lemma:boundedconsensuserror} with
$\epsilon'=\Phi_i^{t^k}(\epsilon)$, $\mathbf{u}=\mathbf{w}^k$, and $\mathbf{v}=\mathbf{w}^{t^k}$ that \eqref{eq:maxxx<=Phi} holds.

\emph{Case ii}: $\mathcal{C}_{i,\epsilon}^k\neq\mathcal{C}_{i,\epsilon}^{k-1}$. Pick any $j,\ell\in\mathcal{C}_{i,\epsilon}^k$, $j\neq\ell$ and consider the following two subcases.

\emph{Subcase ii(a)}: $\mathcal{C}_{j,\epsilon}^{k-1}=\mathcal{C}_{\ell,\epsilon}^{k-1}$. Then,
$\|x_j^k-x_\ell^k\|\le\|x_j^k-x_j^{k-1}\|+\|x_j^{k-1}-x_\ell^{k-1}\|+\|x_\ell^{k-1}-x_\ell^k\|\le\|x_j^k-x_j^{k-1}\|+\|x_\ell^k-x_\ell^{k-1}\|+\Phi_j^{k-1}(\epsilon)$.
Also, from \eqref{eq:dual_gradient}, Proposition~\ref{pro:Lipschitz}, \eqref{eq:weightgrad}, and \eqref{eq:xx<=epsilon}, we have
\begin{align*}
&\|x_p^k-x_p^{k-1}\|\leq L_p\|w_p^k-w_p^{k-1}\|\le L\bar{\alpha}\|\sum_{q\in\mathcal{N}_p^{k-1}}h_{pq}^{k-1}(x_p^{k-1}-x_q^{k-1})\|\nonumber\displaybreak[0]\\
&\le L\bar{\alpha}\bar{h}\sum_{q\in\mathcal{N}_p^{k-1}}\|x_p^{k-1}-x_q^{k-1}\|\le L\bar{\alpha}\bar{h}n\epsilon,\quad\forall p\in\mathcal{V}.
\end{align*}
Consequently, $\|x_j^k-x_\ell^k\|\le2L\bar{\alpha}\bar{h}n\epsilon+\Phi_j^{k-1}(\epsilon)$.

\emph{Subcase ii(b)}: $\mathcal{C}_{j,\epsilon}^{k-1}\cap\mathcal{C}_{\ell,\epsilon}^{k-1}=\emptyset$. Then, there exists a path from $j$ to $\ell$ belonging to the subgraph induced in the graph $(\mathcal{V},\cup_{t=T_\epsilon}^k\mathcal{E}^t)$ by $\mathcal{C}_{i,\epsilon}^k$. Along the path are nodes $p_1=j,s_1,p_2,s_2,\ldots,p_\tau,s_\tau=\ell$ such that (1) $\mathcal{C}_{p_r,\epsilon}^{k-1}=\mathcal{C}_{s_r,\epsilon}^{k-1}$ $\forall r=1,\ldots,\tau$; (2) $\mathcal{C}_{p_r,\epsilon}^{k-1}$ $\forall r\in\{1,\ldots,\tau\}$ are disjoint from each other; and (3) $\{s_r,p_{r+1}\}\in\mathcal{E}^k$ $\forall r\in\{1,\ldots,\tau-1\}$. Here, $\tau\in\{2,\ldots,|\mathcal{C}_{i,\epsilon}^k|\}$ is an integer whose value is no more than the number of distinct sets in the collection $\{\mathcal{C}_{s,\epsilon}^{k-1}\}_{s\in\mathcal{C}_{i,\epsilon}^k}$. Hence, $\|x_j^k-x_\ell^k\|\le\|x_{p_1}^k-x_{s_1}^k\|+\sum_{r=1}^{\tau-1}\bigl(\|x_{s_r}^k-x_{p_{r+1}}^k\|+\|x_{p_{r+1}}^k-x_{s_{r+1}}^k\|\bigr)$. For each $r=1,\ldots,\tau$, since $p_r,s_r\in \mathcal{C}_{p_r,\epsilon}^{k-1}$, we obtain from \emph{Subcase ii(a)} that $\|x_{p_r}^k-x_{s_r}^k\|\le2L\bar{\alpha}\bar{h}n\epsilon+\Phi_{p_r}^{k-1}(\epsilon)$. It then follows from \eqref{eq:xx<=epsilon} that $\|x_j^k-x_\ell^k\|\le(\tau-1)\epsilon+2\tau L\bar{\alpha}\bar{h}n\epsilon+\sum_{r=1}^{\tau}\Phi_{p_r}^{k-1}(\epsilon)\le(1+2L\bar{\alpha}\bar{h}n)|\mathcal{C}_{i,\epsilon}^k|\epsilon
+\sum_{s\in \mathcal{C}_{i,\epsilon}^k}\Phi_s^{k-1}(\epsilon)$.

Combining the above two subcases, we obtain \eqref{eq:maxxx<=Phi}. This completes the proof of \eqref{eq:maxxx<=Phi} for all $i\in\mathcal{V}$ and all $k\ge T_\epsilon$. Further, notice that for each $i\in\mathcal{V}$, $\Phi_i^k(\epsilon)$ is updated only if either $\mathcal{C}_{i,\epsilon}^k$ or $\mathcal{C}_{i,\epsilon}^{k-1}$ is changed. Also note that $\mathcal{C}_{i,\epsilon}^k$ can be expanded at most $n$ times and remains unchanged since time $K_\epsilon$. Therefore, for any $k\ge K_\epsilon+1$,
\begin{align*}
&\max_{i,j\in\mathcal{V}}\|x_i^k-x_j^k\|=\max_{i\in\mathcal{V}}\max_{j,\ell\in \mathcal{C}_{i,\epsilon}^k}\|x_j^k-x_\ell^k\|\le\max_{i\in\mathcal{V}}\Phi_i^k(\epsilon)\le O(\epsilon^{1/2^n}),
\end{align*}
which implies $\max_{i,j\in\mathcal{V}}\|x_i^k-x_j^k\|\rightarrow0$ as $k\rightarrow\infty$.

\subsection{Proof of Theorem~\ref{thm:asymconv}}\label{ssec:proofofthm:asymconv}

Let $\mathbf{w}^\star$ be an optimal solution to the dual problem~\eqref{eq:dualprob}. Due to the convexity of $D$, \eqref{eq:dual_gradient}, and Proposition~\ref{pro:dualfeasible},
\begin{align*}
&D(\mathbf{w}^k)-D^{\star}\leq \langle \nabla D(\mathbf{w}^k), \mathbf{w}^k-\mathbf{w}^{\star}\rangle=\langle \mathbf{x}^k, \mathbf{w}^k-\mathbf{w}^{\star}\rangle\displaybreak[0]\\
&\quad\leq \|P_{S^{\bot}}(\mathbf{x}^k)\|\cdot\| \mathbf{w}^k-\mathbf{w}^{\star}\|\displaybreak[0]\leq M_0\|P_{S^{\bot}}(\mathbf{x}^k)\|,
\end{align*}
where $M_0$ is defined in \eqref{eq:M0}. As $k\rightarrow \infty$, we have shown in the paragraph below Lemma \ref{lemma:convergesgradient} that $\|P_{S^{\bot}}(\mathbf{x}^k)\|\rightarrow 0$. This, along with the above inequality, implies $D(\mathbf{w}^k)\rightarrow D^{\star}$. In addition, since Assumption~\ref{asm:problem} guarantees zero duality gap, we have $F(\mathbf{x}^k)\rightarrow F^{\star}$. Finally, for any $\mathbf{w}\in S^\bot$, due to Corollary~\ref{cor:Lipschitz}, \cite[Theorem 2.1.5]{Nesterov04}, and \eqref{eq:dualgradfull},
\begin{align}
D(\mathbf{w})-D^{\star}\geq\langle \nabla D(\mathbf{w}^{\star}), \mathbf{w}-\mathbf{w}^{\star} \rangle+\frac{1}{2L}\|\nabla D(\mathbf{w})-\nabla D(\mathbf{w}^{\star})\|^2=\frac{1}{2L}\|\tilde{\mathbf{x}}(\mathbf{w})-\mathbf{x}^{\star}\|^2,\label{eq:DD>=12Lxx}
\end{align}
where the last equality is because $\nabla D(\mathbf{w}^{\star})=\mathbf{x}^{\star}\in S$ and $\mathbf{w}, \mathbf{w}^{\star}\in S^{\bot}$. Thus, because $\lim_{k\rightarrow\infty}D(\mathbf{w}^k)-D^{\star}=0$ and $L>0$, $\|\mathbf{x}^k-\mathbf{x}^{\star}\|^2\rightarrow 0$ as $k\rightarrow\infty$.

\subsection{Proof of Lemma~\ref{lemma:finallemma}}\label{ssec:proofoflemma:finallemma}

Let $k\in\{0,B,2B,\ldots\}$. For each $\{i,j\}\in \tilde{\mathcal{E}}^k$, let $t_{\{i,j\}}^k\in\{k,\ldots,k+B-1\}$ be such that $\{i,j\}\in\mathcal{E}^{t_{\{i,j\}}^k}$. Then, note from Proposition~\ref{pro:Lipschitz} that
\begin{align*}
&\|\nabla d_i(w_i^k)-\nabla d_i(w_i^{t_{\{i,j\}}^k})\|^2 =\|\sum_{t=k}^{t_{\{i,j\}}^k-1}(\nabla d_i(w_i^{t+1})-\nabla d_i(w_i^t))\|^2\displaybreak[0]\\
&\le B\sum_{t=k}^{k+B-1}\|\nabla d_i(w_i^{t+1})-\nabla d_i(w_i^t)\|^2\le L_i^2B\sum_{t=k}^{k+B-1}\|w_i^{t+1}-w_i^t\|^2.
\end{align*}
Thus,
\begin{align*}
&\sum_{\{i,j\}\in\tilde{\mathcal{E}}^k}(\|\nabla d_i(w_i^k)-\nabla d_i(w_i^{t_{\{i,j\}}^k})\|^2+\|\nabla d_j(w_j^{t_{\{i,j\}}^k})-\nabla d_j(w_j^k)\|^2)\displaybreak[0]\\
\le& B\sum_{\{i,j\}\in\tilde{\mathcal{E}}^k}\sum_{t=k}^{k+B-1} (L_i^2\|w_i^{t+1}-w_i^t\|^2+L_j^2\|w_j^{t+1}-w_j^t\|^2)\displaybreak[0]\\
\le& B\bar{\varpi}\sum_{t=k}^{k+B-1}\sum_{i\in\mathcal{V}}L_i^2\|w_i^{t+1}-w_i^t\|^2\displaybreak[0]\\
\le& B\bar{\varpi}\bar{\alpha}^2\sum_{t=k}^{k+B-1}\langle\nabla D(\mathbf{w}^t), ((H_{\mathcal{G}^t}\Lambda_L^2H_{\mathcal{G}^t})\otimes I_d)\nabla D(\mathbf{w}^t)\rangle.
\end{align*}
Note that $H_{\mathcal{G}^t}\Lambda_L^2H_{\mathcal{G}^t}\preceq LH_{\mathcal{G}^t}\Lambda_LH_{\mathcal{G}^t}$. Also, from \eqref{eq:delta} and Lemma \ref{lemma:psdmatrixbound}, $H_{\mathcal{G}^t}\Lambda_LH_{\mathcal{G}^t} \preceq \delta H_{\mathcal{G}^t}$. Hence,
\begin{align}
&\sum_{\{i,j\}\in\tilde{\mathcal{E}}^k}(\|\nabla\! d_i(w_i^k)-\nabla d_i(w_i^{t_{\{i,j\}}^k})\|^2+\|\nabla d_j(w_j^{t_{\{i,j\}}^k})-\nabla\! d_j(w_j^k)\|^2)\nonumber\displaybreak[0]\\
&\le B\bar{\varpi}\bar{\alpha}^2\delta L\sum_{t=k}^{k+B-1}\nabla D(\mathbf{w}^t)^T(H_{\mathcal{G}}^t\otimes I_d)\nabla D(\mathbf{w}^t).\label{eq:sumndndndnd}
\end{align}
In addition,
\begin{align}
\sum_{\{i,j\}\in\tilde{\mathcal{E}}^k}\|\nabla d_i(w_i^{t_{\{i,j\}}^k})-\nabla d_j(w_j^{t_{\{i,j\}}^k})\|^2&\le\frac{1}{\underline{h}}\sum_{t=k}^{k+B-1}\sum_{\{i,j\}\in\mathcal{E}^t}h_{ij}^k\|\nabla d_i(w_i^t)-\nabla d_j(w_j^t)\|^2\nonumber\displaybreak[0]\\
&\le\frac{1}{\underline{h}}\sum_{t=k}^{k+B-1}\nabla D(\mathbf{w}^t)^T(H_{\mathcal{G}}^t\otimes I_d)\nabla D(\mathbf{w}^t).\label{eq:sumndnd<=1hsumnDHnD}
\end{align}
It follows from \eqref{eq:sumndndndnd} and \eqref{eq:sumndnd<=1hsumnDHnD} that
\begin{align*}
&\nabla D(\mathbf{w}^k)^T(L_{\tilde{\mathcal{G}}^k}\otimes I_d)\nabla D(\mathbf{w}^k)=\sum_{\{i,j\}\in\tilde{\mathcal{E}}^k}\|\nabla d_i(w_i^k)-\nabla d_j(w_j^k)\|^2\displaybreak[0]\\
&\le\!3\!\!\!\!\sum_{\{i,j\}\in\tilde{\mathcal{E}}^k}\!\!\!\!\!(\|\nabla d_i(w_i^k)\!-\!\nabla d_i(w_i^{t_{\{i,j\}}^k})\|^2\!+\!\|\nabla d_j(w_j^{t_{\{i,j\}}^k})\!-\!\nabla d_j(w_j^k)\|^2\!+\!\|\nabla d_i(w_i^{t_{\{i,j\}}^k})\!-\!\nabla d_j(w_j^{t_{\{i,j\}}^k})\|^2)\displaybreak[0]\\
&\le\eta\sum_{t=k}^{k+B-1}\nabla D(\mathbf{w}^t)^T(H_{\mathcal{G}}^t\otimes I_d)\nabla D(\mathbf{w}^t).
\end{align*}

\subsection{Proof of Theorem~\ref{theorem:dualconvrate}}\label{ssec:proofoftheorem:dualconvrate}

Let $k\ge0$. By Lemmas~\ref{lemma:funcvaldescent} and~\ref{lemma:finallemma},
\begin{align}
&\bigl(D(\mathbf{w}^{(k+1)B})-D^\star\bigr)-\bigl(D(\mathbf{w}^{kB})-D^\star\bigr)=\sum_{t=kB}^{(k+1)B-1}(D(\mathbf{w}^{t+1})\!-\!D(\mathbf{w}^t))\displaybreak[0]\nonumber\\
&\le-\rho\sum_{t=kB}^{(k+1)B-1}\nabla D(\mathbf{w}^t)^T(H_{\mathcal{G}^t}\otimes I_d)\nabla D(\mathbf{w}^t)\le-\frac{\rho}{\eta}\nabla D(\mathbf{w}^{kB})^T(L_{\tilde{\mathcal{G}}^{kB}}\otimes I_d)\nabla D(\mathbf{w}^{kB})\displaybreak[0]\nonumber\\
&\le-\frac{\rho\underline{\lambda}}{\eta}\|P_{S^\bot}(\nabla D(\mathbf{w}^{kB}))\|^2,\label{eq:DDDD<=rhonormB}
\end{align}
where the last inequality is because $\tilde{\mathcal{G}}^{kB}$ is connected and thus $\operatorname{Null}(L_{\tilde{\mathcal{G}}^{kB}}\otimes I_d)=S$. Also, since $\tilde{\mathcal{G}}^{tB}$ $\forall t=0,1,\ldots$ are connected, we have $\underline{\lambda}>0$. From Proposition~\ref{pro:dualfeasible}, we know that $\mathbf{w}^{kB}\in S^\bot$. Also, for any optimal solution $\mathbf{w}^\star$ to \eqref{eq:dualprob}, because $\mathbf{w}^\star\in S^\bot$, we have $\mathbf{w}^{kB}-\mathbf{w}^{\star}\in S^\bot$. Then,
\begin{align*}
D(\mathbf{w}^{kB})-D^\star\leq&\langle \nabla D(\mathbf{w}^{kB}), \mathbf{w}^{kB}-\mathbf{w}^{\star}\rangle=\langle P_{S^\bot}(\nabla D(\mathbf{w}^{kB})), \mathbf{w}^{kB}-\mathbf{w}^{\star}\rangle\displaybreak[0]\\
\leq &\|P_{S^\bot}(\nabla D(\mathbf{w}^{kB}))\|\cdot\|\mathbf{w}^{kB}-\mathbf{w}^{\star}\|.
\end{align*}
This, along with \eqref{eq:DDDD<=rhonormB}, gives
\begin{align*}
\bigl(D(\mathbf{w}^{(k+1)B})-D^\star\bigr)-\bigl(D(\mathbf{w}^{kB})-D^\star\bigr)\le-\rho\underline{\lambda}\bigl(D(\mathbf{w}^{kB})-D^\star\bigr)^2/(\eta\!\!\!\min_{\mathbf{w}^\star\in S^\bot:D(\mathbf{w}^\star)=D^\star}\!\!\!\!\!\!\!\!\!\|\mathbf{w}^{kB}-\mathbf{w}^{\star}\|^2). 
\end{align*}
Finally, using Lemma 6 in \cite[Sec. 2.2.1]{Polyak87}, we obtain
\begin{align*}
D(\mathbf{w}^{kB})-D^{\star}\le&\frac{D(\mathbf{w}^0)-D^\star}{1+\frac{\rho\underline{\lambda}(D(\mathbf{w}^0)-D^\star)}{\eta}\sum\limits_{t=0}^{k-1}(\min\limits_{\mathbf{w}^\star\in S^\bot:D(\mathbf{w}^\star)=D^\star}\|\mathbf{w}^{tB}-\mathbf{w}^{\star}\|^2)^{-1}}\displaybreak[0]\\
\le&\frac{D(\mathbf{w}^0)-D^\star}{1+\rho\underline{\lambda}(D(\mathbf{w}^0)-D^\star)k/(\eta \tilde{M}_k^2)}.
\end{align*}
Note that the above inequality is equivalent to \eqref{eq:theorem1result} since $(D(\mathbf{w}^k))_{k=0}^\infty$ is non-increasing.

\subsection{Proof of Theorem~\ref{thm:primalconvrate}}\label{ssec:proofofthm:primalconvrate}

Let $\mathbf{w}\in S^\bot$. Note that $\|P_{S^\bot}(\tilde{\mathbf{x}}(\mathbf{w}))\|=\|\tilde{\mathbf{x}}(\mathbf{w})-P_S(\tilde{\mathbf{x}}(\mathbf{w}))\|\le\|\tilde{\mathbf{x}}(\mathbf{w})-\mathbf{x}^\star\|$. Thus, from \eqref{eq:DD>=12Lxx},
\begin{align}
\|P_{S^\bot}(\tilde{\mathbf{x}}(\mathbf{w}))\|\le\|\tilde{\mathbf{x}}(\mathbf{w})-\mathbf{x}^{\star}\|\leq\sqrt{2L(D(\mathbf{w})-D^\star)}.\label{eq:lemmaconsensus}
\end{align}
Also note that
\begin{align*}
&F(\tilde{\mathbf{x}}(\mathbf{w}))-F^{\star} = \langle\mathbf{w},\tilde{\mathbf{x}}(\mathbf{w})\rangle-D(\mathbf{w})+D^\star\leq \langle\mathbf{w},\tilde{\mathbf{x}}(\mathbf{w})\rangle=\langle\mathbf{w},P_{S^\bot}(\tilde{\mathbf{x}}(\mathbf{w}))\rangle.
\end{align*}
On the other hand, for any dual optimum $\mathbf{w}^\star\in S^\bot$, we have $-F^\star=D^\star\ge\langle\mathbf{w}^\star,\tilde{\mathbf{x}}(\mathbf{w})\rangle-F(\tilde{\mathbf{x}}(\mathbf{w}))$, which leads to
\begin{align*}
F(\tilde{\mathbf{x}}(\mathbf{w}))-F^\star\ge\langle\mathbf{w}^\star,P_{S^\bot}(\tilde{\mathbf{x}}(\mathbf{w}))\rangle.
\end{align*}
As a result,
\begin{align}
&-\|\mathbf{w}^\star\|\cdot\|P_{S^\bot}(\tilde{\mathbf{x}}(\mathbf{w}))\|\le F(\tilde{\mathbf{x}}(\mathbf{w}))-F^\star\le\|\mathbf{w}\|\cdot\|P_{S^\bot}(\tilde{\mathbf{x}}(\mathbf{w}))\|.\label{eq:lemmafuncval}
\end{align}
Combining \eqref{eq:lemmaconsensus} and \eqref{eq:lemmafuncval} with Proposition~\ref{pro:dualfeasible} and Theorem~\ref{theorem:dualconvrate} completes the proof.

\bibliographystyle{IEEEtran}
\bibliography{reference}
\end{document}